\date{\today}
\newcommand{\Z}{{\mathbb Z}}
\newcommand{\R}{{\mathbb R}}
\newcommand{\T}{{\mathbb T}}
\newtheorem{theorem}{Theorem}[section]
\newtheorem{remark}[theorem]{Remark}
\newtheorem{lemma}[theorem]{Lemma}
\newtheorem{prop}[theorem]{Proposition}
\def\wtil{\widetilde}
\newcommand{\lam}{\lambda}
\newcommand{\Dint}{\int\!\!\int}
\def\Rrm{\R}
\def\lla{\lam}
\def\Lla{\Lambda}
\newcommand{\lir}{\liminf_{r \downarrow 0} \,(2r)^{-1}}
\newcommand{\gam}{\gamma}
\newcommand{\om}{\omega}
\newcommand{\Sig}{\Sigma}
\def\Ak{{\mathcal A}}
\def\Sk{{\mathcal S}}
\def\Ik{{\mathcal I}}
\def\Jk{{\mathcal J}}
\def\Lk{{\mathcal L}}
\def\be{\begin{equation}}
\def\ee{\end{equation}}
\newcommand{\eps}{{\varepsilon}}
\def\nula{\nu_\lambda}
\begin{document}

\title[Absolutely Continuous Convolutions of Singular Measures]{Absolutely Continuous Convolutions of Singular Measures and an Application to the Square Fibonacci Hamiltonian}

\author[D.\ Damanik]{David Damanik}

\address{Department of Mathematics, Rice University, Houston, TX~77005, USA}

\email{damanik@rice.edu}

\thanks{D.\ D.\ was supported in part by a Simons Fellowship and NSF grant DMS--1067988.}

\author[A.\ Gorodetski]{Anton Gorodetski}

\address{Department of Mathematics, University of California, Irvine, CA~92697, USA}

\email{asgor@math.uci.edu}

\thanks{A.\ G.\ was supported in part by NSF grants  DMS--1301515 %DMS--0901627
and
IIS-1018433.}

\author[B.\ Solomyak]{Boris Solomyak}

\address{Department of Mathematics, University of Washington, Seattle, WA 98195, USA}

\email{solomyak@math.washington.edu}

\thanks{B.\ S.\ was supported in part by NSF grant DMS-0968879, and by the Forschheimer Fellowship
and ERC AdG 267259 grant at the Hebrew University of Jerusalem.}

\begin{abstract}
We prove for the square Fibonacci Hamiltonian that the density of states measure is absolutely continuous for almost all pairs of small coupling constants. This is obtained from a new result we establish about the absolute continuity of convolutions of measures arising in hyperbolic dynamics with exact-dimensional measures.
\end{abstract}

\maketitle

\section{Introduction}\label{sec:intro}

\subsection{Quasicrystals and Spectral Theory}

Quasicrystals are structures that are aperiodic and yet display a strong form of long-range order. Their discovery in 1982 by Shechtman, reported in 1984 in \cite{SBGC}, gave rise to a paradigm shift in materials science and ultimately led to the award of the 2011 Nobel Prize in Chemistry to Shechtman. The study of electronic transport properties in quasicrystals is fascinating from several perspectives. On the one hand, the phenomena that seem to occur are quite different from phenomena associated with transport (or absence thereof) in ordered and random structures. On the other hand, establishing these phenomena rigorously in the context of commonly accepted abstract quasicrystal models turns out to be very hard. While progress has been made in one space dimension (e.g., \cite{BLS, D98, D05, DKL, DT03, DT05, DT08, JL2, KKL}, culminating in a rigorous proof of anomalous transport for the central model, the Fibonacci Hamiltonian \cite{DT07}), the situation in higher dimensions is essentially unresolved on a rigorous level. This is mainly due to the fact the we currently lack tools and machinery that apply to the standard higher-dimensional quasicrystal models and that would give information about their spectral and transport properties.

It should be emphasized that the standard higher-dimensional models present a challenge even for numerical investigations. As a consequence, simpler models have been proposed and studied numerically \cite{EL06, EL07, EL08, ILML, Si89, SM89, SM90, SMS, TS}. These simpler models are separable and may be written as a sum of tensor products of one-dimensional models. Questions about the spectrum, the density of states measure, and transport for the separable higher-dimensional model then reduce to corresponding questions for the one-dimensional models. This leads to questions about sums of Cantor sets and convolutions of singular measures that are highly interesting from a purely mathematical perspective, independently of the physical relevance. In fact, questions of this kind have been studied for a long time within the Harmonic Analysis, Fractal Geometry, and Dynamical Systems communities. It is therefore natural that the present paper will present results at the interface between these areas and mathematical physics.

We will present new tools that allow one to study the density of states measure (or the spectral measures) of a separable higher-dimensional model. While these tools are quite general, for definiteness we will concentrate in our applications on a specific model, the square Fibonacci Hamiltonian.\footnote{Similar questions to the ones studied for the square Fibonacci Hamiltonian here and in \cite{DG11} have been asked in the case of the labyrinth model (and similar models) in the physics literature \cite{Si89, SM89, SM90, SMS}. We do not elaborate on those models here, but we expect that our methods can be applied in those cases too.} This operator acts as
\begin{multline}\nonumber
[H^{(2)}_{\lambda_1, \lambda_2, \omega_1, \omega_2} \psi] (m,n) =  \psi(m+1,n) + \psi(m-1,n) + \psi(m,n+1) + \psi(m,n-1) + \\  + \Big(\lambda_1  \chi_{[1-\alpha , 1)}(m\alpha + \omega_1 \!\!\!\!\!\! \mod 1) + \lambda_2 \chi_{[1-\alpha , 1)}(n\alpha + \omega_2 \!\!\!\!\!\! \mod 1) \Big) \psi(m,n)
\end{multline}
in $\ell^2(\Z^2)$, with $\alpha=\frac{\sqrt{5}-1}{2}$, coupling constants $\lambda_1, \lambda_2 > 0$ and phases $\omega_1, \omega_2 \in \T = \R/\Z$. The associated one-dimensional operators are $H_{\lambda_1, \omega_1}$, $H_{\lambda_2, \omega_2}$, where $H_{\lambda, \omega}$ acts in $\ell^2(\Z)$ by
$$
[H_{\lambda, \omega} \psi] (n) =  \psi(n+1) + \psi(n-1) + \lambda  \chi_{[1-\alpha , 1)}(n\alpha + \omega \!\!\! \mod 1) \psi(n).
$$
One is interested in the spectrum and the density of states measure of $H^{(2)}_{\lambda_1, \lambda_2, \omega_1, \omega_2}$, as well as the long-time behavior of the quantum evolution $e^{-it H^{(2)}_{\lambda_1, \lambda_2, \omega_1, \omega_2}} \psi$ for some initial state $\psi \in \ell^2(\Z^2)$.

By the general theory, the spectrum of $H^{(2)}_{\lambda_1, \lambda_2, \omega_1, \omega_2}$ (resp., $H_{\lambda, \omega}$) does not depend on $\omega_1, \omega_2$ (resp., $\omega$), and may therefore be denoted by $\Sigma^{(2)}_{\lambda_1, \lambda_2}$ (resp., $\Sigma_{\lambda}$), and moreover,
$$
\Sigma^{(2)}_{\lambda_1, \lambda_2} = \Sigma_{\lambda_1} + \Sigma_{\lambda_2}.
$$
It is known that $\Sigma_{\lambda}$ is a zero-measure Cantor sets for every $\lambda > 0$ \cite{S89}.

Moreover, the general theory also implies that the density of states measure of the family $\{H^{(2)}_{\lambda_1, \lambda_2, \omega, \omega'}\}_{\omega,\omega' \in \T}$ is given by the convolution of the density of states measures of the families $\{H^{(1)}_{\lambda_1, \omega}\}_{\omega \in \T}$ and $\{H^{(1)}_{\lambda_2, \omega'}\}_{\omega' \in \T}$. That is, the measures defined by
$$
\lim_{N \to \infty} \frac{1}{N} \# \{ \text{eigenvalues of $H^{(1)}_{\lambda, \omega}$ that are} \le E \} = \nu_\lambda((-\infty,E])
$$
and
$$
\lim_{N \to \infty} \frac{1}{N^d} \# \{ \text{eigenvalues of $H^{(2)}_{\lambda_1, \lambda_2, \omega_1, \omega_2}$ that are} \le E \} = \nu^{(2)}_{\lambda_1,\lambda_2}((-\infty,E])
$$
(which are known to not depend on $\omega$ (resp., $\omega_1, \omega_2$)) obey
$$
\nu^{(2)}_{\lambda_1,\lambda_2} = \nu_{\lambda_1} \ast \nu_{\lambda_2}.
$$

The physics papers mentioned above containing numerical results for separable operators of this kind suggest a very interesting global picture. For small values of the coupling constants, $\Sigma^{(2)}_{\lambda_1, \lambda_2}$ has no gaps, whereas for large values of the coupling constants it is a Cantor set of zero Lebesgue measure. In fact these conjectures based on numerical experiments have recently been confirmed rigorously \cite{DEGT, DG11}. The situation is less clear for intermediate values of the coupling constants, but appealing to the existing theory of sums of Cantor sets, one may venture to expect Cantorval structures (see \cite{MO}) to appear.

Based on the numerics, the density of states measure $\nu^{(2)}_{\lambda_1,\lambda_2}$ is expected to be absolutely continuous in the small coupling regime and singular in the large coupling regime. Since this measure is supported by the spectrum, the latter statement is of course an immediate consequence once zero-measure spectrum has been established. The former statement, on the other hand, is not automatic, as there are examples of Schr\"odinger operators with positive-measure spectrum and singular density of states measure (see, e.g., \cite{ADZ}). Given that the density of states measures $\nu_\lambda$ of the one-dimensional models are singular, and the measure of interest is a convolution of such measures, establishing the expected result in the small coupling regime calls for a method that implies the absolute continuity of the convolution of two singular measures, which applies to the singular measures in question. Thus, we will establish such a criterion, show that it may be applied to the one-dimensional density of states measures, and derive the conjectured absolute continuity of the density of states measure of the weakly coupled square Fibonacci Hamiltonian (in a full measure sense). In particular, this confirms the dimension-dependence of the type of the density of states measure since this measure is purely singular for all one-dimensional quasicrystal models that have been studied so far.

\subsection{Sums of Cantor Sets and Convolutions of Singular Measures}

Motivated by questions in smooth dynamics, Palis asked whether it is true (at least generically) that the arithmetic sum of dynamically defined Cantor sets either has measure zero, or contains an interval (see \cite{PaTa}). This claim has become known as the ``Palis Conjecture'', it has been investigated in various settings and levels of generality by many authors.  Without giving a comprehensive review of the problem, we mention that such questions are known to be extremely hard. The Palis conjecture is still open for affine Cantor sets. The question has been
answered affirmatively in \cite{MY} for generic dynamically defined Cantor sets, but the genericity is rather unpleasant: it occurs in an infinite-dimensional space, and there seems to be no chance to obtain results for one-parameter families.
The study of convolutions of
measures supported on Cantor sets is naturally related to that of arithmetic sums, since these convolutions are supported on them.

Starting from the mid-90's the so-called ``transversality method'' has been developed, first to compute the dimensions, and then to establish absolute continuity for almost every parameter in one- and multi-parameter families of self-similar sets and measures with overlaps \cite{PoSi, So95, PeSo96,So98,PeSch, N,NW}.  It was later extended to some families of nonlinear systems, but the crucial ``transversality condition'' is difficult  to check; this has been done only in a few special cases \cite{SS, SSU, BPS}. On the other hand, it turned out that similar methods can also work for  sums of linear Cantor sets and convolutions of measures on them \cite{So97, PeSo, PeSch}. The key property that is used there to show transversality is the monotonicity of the contraction coefficient with the change of parameter. In the present paper we manage to extend these methods and results, first to measures on nonlinear dynamically defined Cantor sets on the line, and then to a class of hyperbolic invariant measures. We use monotonicity of the Lyapunov exponent of the invariant measure under consideration (instead of the monotonicity of the contraction coefficient), thereby opening the door for an enormous number of potential applications in dynamical systems, number theory, etc.

We should mention that in \cite{Gar} absolute continuity of convolutions was also obtained for a family of nonlinear Cantor sets, but only in one particular case. Recently substantial progress has been achieved in the problem of {\em computing dimensions} of sums of Cantor sets and convolutions of measures on them, both for  linear and nonlinear ones \cite{PeShm,NPS,HS} (the improvement is that the results hold under specific checkable conditions rather than typically or generically), but those methods are inadequate for proving absolute continuity.

One may wonder why we have to deal with convolutions of two different measures rather than convolution squares. The answer is that the situation becomes more complicated; essentially, we get a ``resonance.'' Properties of convolutions of singular measures have been studied for a long time in Harmonic Analysis; for instance, even the existence of a singular measure $\mu$ with nice convolution square $\mu * \mu$ is a non-trivial fact \cite{Sae, W}. Almost nothing is known about convolution squares of dynamically defined Cantor measures, so we look at the linear ones for guidance. The most basic case is the convolution square of the classical Cantor-Lebesgue measure $\nu_\lambda$ on the Cantor set $C_\lam$ with contraction ratio $\lambda < \frac{1}{2}$. Then the threshold for $\nula * \nula$ to be (typically) absolutely continuous ($\lam = \frac{1}{2\sqrt{2}}$; see \cite[Cor.\ 1.5]{PeSo}) is larger than the threshold for the sum $C_\lam + C_\lam$ to contain an interval ($\lam = \frac{1}{3}$), which is yet larger than the threshold for the sum of the dimensions to exceed one ($\lam = \frac{1}{4}$). Analogous questions for more complicated Cantor-like measures are still open; even in the affine linear case there are no comprehensive results. We still expect the convolution square to become
typically absolutely continuous in the ``heavily overcritical'' case, but this is only a speculation.

\subsection{Structure of the Paper}

The main body of the paper consists of three parts, addressing the absolute continuity of convolutions of measures on the line, an application of this to convolutions of projections of hyperbolic invariant measures with exact-dimensional measures, and the announced absolute continuity results for the density of states measure of the weakly coupled square Fibonacci Hamiltonian in a full measure sense, respectively. These topics will be addressed in Sections~2--4. In addition, Section~5 states a few questions and open problems; and a brief appendix gives some background on separable operators and explains why their spectra are sums of spectra and why their density of states measures are convolutions of density of states measures.

\section{A Criterion for the Absolute Continuity of the Convolution of Measures on the Line}

Let $\Ak^{\Z_+}$, with $|\Ak| = \ell \ge 2$ and $\Z_+ = \{ 0, 1, 2, \ldots \}$, be the standard symbolic space, equipped with the product topology. Let $A$ be a primitive $0$--$1$ matrix of size $\ell \times \ell$, and $\Sigma_A^\ell \subset \Ak^{\Z_+}$ the one-sided topological Markov chain associated with $A$. That is, $\om =  \om_0 \om_1 \ldots \in \Ak^{\Z_+}$ belongs to $\Sigma_A^\ell$ if and only if $A_{\om_m\om_{m+1}} = 1$ for all $m \ge 0$. The shift transformation $\sigma_A : \Sigma_A^\ell \to \Sigma_A^\ell$ acts as $(\sigma_A \omega)_n = \omega_{n+1}$. Let $\mu$ be a  probability measure on $\Sigma_A^\ell$.

Let $J = [\lam_0,\lam_1]\subset \R$ be a parameter interval. We assume that we are given a family of continuous maps
$$
\Pi_\lam : \, \Sigma_A^\ell \to \R,\ \lam \in J,
$$
such that $\Sig_\lam = \Pi_\lam(\Sigma_A^\ell)$ are the Cantor sets and
$$
\nula = \Pi_\lam (\mu) := \mu \circ \Pi_\lam^{-1}
$$
are the measures of interest.

Let $\eta$ be another (fixed) compactly supported Borel probability measure on the real line. We will assume that
$\eta$ is exact-dimensional, having the local dimension equal to $d_\eta$ at $\eta$-a.e.\ $x$, that is,
\be \label{locdim}
\lim_{r\downarrow 0} \frac{\log\eta(B_r(x))}{\log r}= d_\eta\ \ \mbox{for $\eta$-a.e.\ $x$}.
\ee
Here $B_r(x)=[x-r,x+r]$. In the first proposition below we assume a stronger, uniform H\"older condition for the measure $\eta$; it is subsequently relaxed to (\ref{locdim}) using a truncation argument.

For a word $u \in \Ak^{n}$, $n \ge 0$, we denote by $|u| = n$ its length and by $[u]$ the cylinder set of elements of $\Sigma_A^\ell$ that have $u$ as a prefix. More precisely, $[u] = \{ \omega \in \Sigma_A^\ell : \omega_0 \ldots \omega_{n-1} = u \}$. For $\om, \tau \in \Sigma_A^\ell$, we write $\om \wedge \tau$ for the maximal common prefix of $\om$ and $\tau$ (which is empty if $\om_0 \ne \tau_0$; we set the length of the empty word to be zero). Furthermore, for $\om, \tau \in \Sigma_A^\ell$, let
$$
\phi_{\om, \tau}(\lam) := \Pi_\lam(\om) - \Pi_\lam(\tau).
$$
We write $\Lk^1$ for the one-dimensional Lebesgue measure.

\begin{prop}\label{p.main}
Suppose that there exist constants $C_1, C_2, C_3, C_4, \alpha, \beta, \gamma > 0$ and $k_0\in \Z_+$ such that
\begin{equation}\label{cond1}
\max_{\lam \in J} |\phi_{\om,\tau}(\lam)| \le C_1 \ell^{-\alpha |\om \wedge \tau|} \ \ \mbox{ for all }\ \om, \tau \in \Sigma_A^\ell,\ \om \ne \tau;
\end{equation}
\begin{equation}\label{cond2}
\sup_{v \in \R} \Lk^1 \bigl( \{\lam\in J:\, |v+\phi_{\om,\tau}(\lam)| \le r\} \bigr) \le C_2 \ell^{|\om \wedge \tau| \beta} r\ \ \mbox{ for all }\ \om, \tau \in \Sigma_A^\ell,\ \om \ne \tau,
\end{equation}
such that $|\om \wedge \tau| \ge k_0$, and
\begin{equation}\label{cond3}
\max_{u \in \Ak^{n}} \mu([u]) \le C_3\ell^{-\gamma n}\ \ \mbox{ for all } n\ge 1.
\end{equation}
Further, let $\eta$ be a compactly supported Borel probability measure on $\R$ satisfying
\begin{equation}\label{cond4}
\eta(B_r(x)) \le C_4 r^{d_\eta}\ \ \mbox{for all $x \in \R,\ r > 0$}.
\end{equation}
Then
$$
\Big( d_\eta + \frac{\gamma}{\beta} > 1\ \mbox{\bf and}\ d_\eta > \frac{\beta-\gam}{\alpha} \Big)\
\Longrightarrow \ \eta*\nula \ll \Lk^1\ \ \mbox{with density in $L^2$ for a.e.-$\lam\in J$}.
$$
\end{prop}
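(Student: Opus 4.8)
The plan is to run the classical transversality scheme for absolute continuity of parametrized families, with the sublevel‑set hypothesis (\ref{cond2}) playing the role usually played by monotonicity of a contraction ratio. As a preliminary step I would pass to cylinders of length $k_0$, so that (\ref{cond2}) applies to every pair that occurs: for $u\in\Ak^{k_0}$ with $\mu([u])>0$ set $\mu_u:=\mu([u])^{-1}\mu|_{[u]}$ and $\rho_{u,\lam}:=\eta*\Pi_\lam(\mu_u)$, so that $\eta*\nula=\sum_{|u|=k_0}\mu([u])\,\rho_{u,\lam}$ is a \emph{finite} sum; it then suffices to prove that, for each fixed $u$, $\rho_{u,\lam}\ll\Lk^1$ with an $L^2$ density for a.e.\ $\lam\in J$. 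For a fixed $u$ I would invoke the classical density lemma — if a finite Borel measure $\rho$ on $\R$ has $\liminf_{r\downarrow0}(2r)^{-1}\int_\R\rho(B_r(x))\,d\rho(x)<\infty$ then $\rho\ll\Lk^1$ with density in $L^2$ — together with Fatou's lemma in $\lam$, thereby reducing the goal to the single inequality
\[
\liminf_{r\downarrow0}\ \frac{1}{2r}\int_J\int_\R \rho_{u,\lam}(B_r(x))\,d\rho_{u,\lam}(x)\,d\lam\ <\ \infty .
\]

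Writing $\Pi_\lam(\mu_u)$ as a push-forward and using Fubini, the $\lam$-integral of the inner double integral becomes $\iint_{[u]\times[u]} B_{\om,\tau}(r)\,d(\mu_u\times\mu_u)(\om,\tau)$, where
\[
B_{\om,\tau}(r):=\int_J\int_\R\int_\R \mathbf{1}_{\{|\phi_{\om,\tau}(\lam)+y_1-y_2|\le r\}}\,d\eta(y_1)\,d\eta(y_2)\,d\lam .
\]
The heart of the matter is an estimate for $B_{\om,\tau}(r)$ that exploits (\ref{cond1}), (\ref{cond2}) and (\ref{cond4}) simultaneously. Put $n:=|\om\wedge\tau|$, so $n\ge k_0$ because $\om,\tau\in[u]$. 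By (\ref{cond1}) every value $\phi_{\om,\tau}(\lam)$ lies in $[-C_1\ell^{-\alpha n},C_1\ell^{-\alpha n}]$, so the constraint confines $y_1-y_2$ to an interval of length $2(C_1\ell^{-\alpha n}+r)$; by (\ref{cond4}) the $\eta\times\eta$-mass of the corresponding slab is at most $C_4(C_1\ell^{-\alpha n}+r)^{d_\eta}$; and by (\ref{cond2}), for each admissible $(y_1,y_2)$ the set of $\lam$ has Lebesgue measure at most $C_2\ell^{n\beta}r$. Combining these gives, up to a constant depending only on the data,
\[
B_{\om,\tau}(r)\ \le\ C\,\ell^{n\beta}\,r\,(\ell^{-\alpha n}+r)^{d_\eta},
\]
and, discarding (\ref{cond2}), the cruder bound $B_{\om,\tau}(r)\le |J|\,C_4\,(C_1\ell^{-\alpha n}+r)^{d_\eta}$.

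Finally I would sum over $n=|\om\wedge\tau|\ge k_0$, using the mass bound $(\mu_u\times\mu_u)(\{|\om\wedge\tau|=n\})\le\mu([u])^{-1}\max_{|v|=n}\mu([v])\le C_3\,\mu([u])^{-1}\ell^{-\gamma n}$ from (\ref{cond3}), and splitting at the scale $n^*\approx(\alpha\log\ell)^{-1}\log(1/r)$ where $\ell^{-\alpha n}\approx r$. (One first notes that (\ref{cond1})--(\ref{cond2}) force $\beta\ge\alpha$: with $v=0$ and $r=C_1\ell^{-\alpha n}$ the set in (\ref{cond2}) is all of $J$, so $|J|\le C_1C_2\ell^{n(\beta-\alpha)}$ for every $n\ge k_0$.) For $k_0\le n\le n^*$ one has $\ell^{-\alpha n}\ge r$, so the first bound gives a summand of order $\ell^{\,n(\beta-\gamma-\alpha d_\eta)}$ in $(2r)^{-1}\iint B_{\om,\tau}\,d(\mu_u\times\mu_u)$, and summing over $n\ge k_0$ yields a bound independent of $r$ \emph{precisely because} $d_\eta>(\beta-\gam)/\alpha$, i.e.\ $\beta-\gamma-\alpha d_\eta<0$. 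For $n>n^*$ one has $\ell^{-\alpha n}<r$, so the cruder bound gives a summand of order $r^{\,d_\eta-1}\ell^{-\gamma n}$, and summing over $n>n^*$ yields order $r^{\,d_\eta+\gamma/\alpha-1}$, which remains bounded as $r\downarrow0$ because $d_\eta+\gamma/\alpha\ge d_\eta+\gamma/\beta>1$ (here $\beta\ge\alpha$ is used). Adding the two ranges gives a bound uniform in $r$, the displayed $\liminf$ is finite, and the Proposition follows.

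I expect the principal obstacle to be the combined bound for $B_{\om,\tau}(r)$: one must extract \emph{at once}, and with exactly the right exponents, the size of $\phi_{\om,\tau}$ from (\ref{cond1}), the transversality from (\ref{cond2}), and the dimensional regularity of $\eta$ from (\ref{cond4}), so that when set against the mass exponent $\gamma$ of (\ref{cond3}) the two resulting series converge under precisely the stated inequalities — the small-$n$ sum being governed by $d_\eta>(\beta-\gam)/\alpha$ and the tail by $d_\eta+\gamma/\beta>1$. A secondary point, and the reason for the reduction to cylinders of length $k_0$ at the outset, is that (\ref{cond2}) is only assumed when $|\om\wedge\tau|\ge k_0$, so the whole argument must be arranged so that pairs with shorter common prefix never enter the estimates.
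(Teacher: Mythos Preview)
Your argument is correct and shares the overall scaffolding with the paper's proof (reduction to cylinders of length $k_0$, the lower-density criterion plus Fatou in $\lambda$, unfolding the convolution, and the sublevel-set bound (\ref{cond2})), but you organize the final estimate differently. The paper splits the quadruple integral according to whether $|y-z|<2r$ or $|y-z|\ge 2r$: for the near-diagonal piece $\mathcal I_1$ it uses $\min\{1,\ell^{k\beta}r\}$ for the $\lambda$-measure and $r^{d_\eta}$ for the $\eta\times\eta$-mass of the slab, then splits the $k$-sum at $k_r=\log(1/r)/(\beta\log\ell)$ to obtain $\mathcal I_1\lesssim r^{d_\eta+\gamma/\beta}$, which is where $d_\eta+\gamma/\beta>1$ enters directly; for the far piece $\mathcal I_2$ it uses (\ref{cond1}) to see that the $\lambda$-set is empty once $C_1\ell^{-\alpha k}<|y-z|/2$, sums to $\kappa(y,z)$, and lands on $r$ times the $(\beta-\gamma)/\alpha$-energy of $\eta$, finite because $d_\eta>(\beta-\gamma)/\alpha$. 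You instead prove a single combined bound $B_{\om,\tau}(r)\lesssim\ell^{n\beta}r(\ell^{-\alpha n}+r)^{d_\eta}$ and split the $n$-sum at $n^*\approx\log(1/r)/(\alpha\log\ell)$: the small-$n$ range is controlled directly by $d_\eta>(\beta-\gamma)/\alpha$, while for the tail you invoke the auxiliary inequality $\beta\ge\alpha$ (your derivation of which is fine, for all large enough $n$, which suffices) to pass from the hypothesis $d_\eta+\gamma/\beta>1$ to the needed $d_\eta+\gamma/\alpha>1$. The paper's decomposition has the advantage that each of the two hypotheses governs exactly one piece with no side lemma; yours has the advantage of a single closed-form estimate for $B_{\om,\tau}(r)$ before any case analysis.
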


\begin{remark}
Condition \eqref{cond2} is the hardest to check in the applications we envision {\rm (}in particular the ones given in this paper{\rm )}. It follows from
$$
\inf_{\lam_1, \lam_2 \in J, \; \lam_1 \not= \lam_2} \frac{|\phi_{\om,\tau} (\lam_1) - \phi_{\om,\tau} (\lam_2)|}{|\lam_1 - \lam_2|} \ge C_2' \ell^{-|\om \wedge \tau| \beta}\ \ \mbox{ for all }\ \om, \tau \in \Sigma_A^\ell,\ \om \ne \tau.
$$
An estimate of this kind in turn follows from a suitable lower bound on $|\frac{d}{d\lam} \phi_{\om,\tau}(\lam)|$.
\end{remark}

\begin{proof}[Proof of Proposition~\ref{p.main}]
We follow closely the argument of \cite[Theorem 2.1]{PeSo}. For a word $u\in \Ak^{k_0}$, let $\nu_\lambda^u = \mu|_{[u]} \circ \Pi_\lam^{-1}$. Then $\nu_\lambda = \sum_{|u|=k_0} \nu_\lambda^u$, so it is enough to prove that $\eta*\nu_\lambda^u$ is absolutely continuous for all $u\in \Ak^{k_0}$ and a.e. $\lambda$. We fix such a $u$ of length $k_0$ for the remainder of the proof.

Consider the lower density of $\eta * \nula^u$,
$$
\underline{D} (\eta * \nula^u,x) = \lir (\eta * \nula^u) [B_r(x)].
$$
As in \cite[9.7]{Mattila}, if
$$
\Jk_\lla := \int_{\Rrm} \underline{D} (\eta * \nula^u,x) \, d(\eta * \nula^u)(x) < \infty,
$$
then $\underline{D} (\eta * \nula^u,x)$ is finite for $(\eta * \nula^u)$-a.e.~$x$, and $\eta * \nula^u$ is absolutely continuous, with a Radon-Nikodym derivative in $L^2$. Thus, it is enough to show that
$$
\Sk := \int_J \Jk_\lla \, d\lla < \infty \, .
$$
By Fatou's Lemma,
$$
\Sk \le \Sk_1 := \lir \int_J \int_{\Rrm} (\eta * \nula^u) [B_r(x)] \, d(\eta * \nula^u)(x) \, d\lla.
$$
Using the definition of convolution and making a change of variable, we obtain
\begin{equation}\label{defS1}
\Sk_1 = \lir \int_J \int_{\Rrm} \int_{[u]} (\eta * \nula^u) [B_r(y + \Pi_\lla(\omega))] \, d\mu(\omega) \, d\eta(y) \, d\lla.
\end{equation}
Next we have, denoting by ${\bf 1}_S$ the indicator function of a set $S$,
\begin{align*}
(\eta * \nula^u) [B_r(y + \Pi_\lla(\omega))] & = \int_{\Rrm} \mbox{\bf 1}_{B_r(y + \Pi_\lla(\omega))} (w) \,d(\eta * \nula^u)(w) \\
& = \int_{\Rrm} \int_{[u]} \mbox{\bf 1}_{\{(z,\tau) : \ z+\Pi_\lla(\tau) \in B_r(y + \Pi_\lla(\omega))\}} (z,\tau) \, d\mu(\tau) \, d\eta(z).
\end{align*}
Substituting this into \eqref{defS1} and reversing the order of integration yields
\begin{equation}
\Sk_1 = \lir \int_{\Rrm} \int_{[u]} \int_{\Rrm} \int_{[u]} \Lk^1(\Lla_r(y,z,\omega,\tau)) \, d\mu(\tau) \, d\eta(z) \, d\mu(\omega) \, d\eta(y), \label{old11}
\end{equation}
where
\begin{align*}
\Lla_r(y,z,\omega,\tau) & := \{\lla \in J:\ |(y+\Pi_\lla(\omega)) - (z+\Pi_\lla(\tau))| \le r\} \\
& = \{\lla \in J : \ |y - z + \phi_{\om,\tau}(\lla)|\le r\} \, .
\end{align*}
Note that for $\om,\tau\in [u]$, we have $|\om \wedge \tau|\ge k_0$. Hence
\begin{equation}\label{est1}
\Lk^1(\Lla_r(y,z,\omega,\tau)) \le \wtil{C}_2 \min \{1, \ell^{|\om \wedge \tau|\beta} r\}
\end{equation}
by \eqref{cond2}, where $\wtil{C}_2 = \max\{C_2,|J|\}$. Now we consider the integral in \eqref{old11}, use Fubini, and split it according to the distance between $y$ and $z$:
\begin{align}
\int_{\Rrm} \int_{[u]} & \int_{\Rrm} \int_{[u]} \Lk^1(\Lla_r(y,z,\omega,\tau))  \, d\mu(\tau) \, d\eta(z) \, d\mu(\omega) \, d\eta(y) \label{split} \\
& = \Dint_{\{ |y-z| < 2r\}} \Dint_{[u]\times [u]} + \Dint_{\{ |y-z| \ge 2r\}} \Dint_{[u]\times [u]} \nonumber \\[1.2ex]
& =: \Ik_1 + \Ik_2. \nonumber
\end{align}
To complete the proof, it is sufficient to show that $\Ik_1 \lesssim r$ and $\Ik_2 \lesssim r$ (here and below the symbol $\lesssim$ means inequality up to a multiplicative constant independent of $r$).
In view of \eqref{est1},
\begin{equation}\label{est2}
\Ik_1 \lesssim \Dint_{[u]\times [u]} \Dint_{\{ |y-z| < 2r\}} \min\{1, \ell^{|\om \wedge \tau| \beta} r\} \, d\eta(y) \, d\eta(z) \, d\mu(\omega) \, d\mu(\tau).
\end{equation}
Note that the integrand does not depend on $y$, $z$, and we can estimate
$$
(\eta\times \eta) \{ (y,z) : \ |y-z|< 2r \}  \le \int \eta(B(y,2r)) \, d\eta(y) \lesssim r^{d_\eta},
$$
by \eqref{cond4}. Recalling that $|u|=k_0$ we obtain
\begin{equation}\label{est3}
\Ik_1 \lesssim  r^{d_\eta} \sum_{k=k_0}^\infty \min\{1, \ell^{k\beta} r\} \cdot (\mu \times \mu) \{(\om,\tau) : \ |\om \wedge \tau| = k\}.
\ee
Observe that \eqref{cond3} implies
\begin{align}
\label{est31}
(\mu \times \mu) \{(\om,\tau) : \ |\om \wedge \tau| = k \} & \le \sum_{|v| = k} \mu([v])^2 \\
& \le C_3 \ell^{-\gam k} \sum_{|v| = k} \mu([v]) \nonumber \\
& = C_3 \ell^{-\gam k}. \nonumber
\end{align}
Hence
$$
\Ik_1 \lesssim r^{d_\eta} \sum_{k=k_0}^\infty \min\{1, \ell^{k \beta} r\} \cdot \ell^{-\gam k}.
$$
Setting $k_r = \frac{\log(1/r)}{\beta \log \ell}$, we get
$$
\Ik_1 \lesssim r^{d_\eta} \left[ \sum_{k \le k_r} r \ell^{k\beta} \cdot \ell^{-\gam k} + \sum_{k > k_r} \ell^{-\gam k} \right].
$$
Summing the geometric series and using $\ell^{-\beta k_r} = r$, we obtain
\begin{equation}\label{est4}
\Ik_1 \lesssim r^{d_\eta + \frac{\gam}{\beta}},
\end{equation}
which is $\lesssim r$ by assumption.

\medskip

It remains to estimate $\Ik_2$, the second integral in \eqref{split}. If $|\om \wedge \tau| = k$ and $C_1 \ell^{-\alpha k} < |y-z|/2$, then by \eqref{cond1}, $|\phi_{\om,\tau}(\lam)| < |y-z|/2$ for all $\lam \in J$, and $|y-z + \phi_{\om,\tau}(\lam)| > |y-z|/2$. When $|y-z| \ge 2r$, this implies that the set $\Lla_r(y,z,\omega,\tau)$ is empty. Denote
$$
\kappa(y,z) = -\frac{\log \frac{|y-z|}{2C_1}}{\alpha \log \ell} \, .
$$
We obtain, using \eqref{est1} and \eqref{est31}:
\begin{align*}
\Ik_2 & \lesssim \Dint_{\R^2} \sum_{k\le \kappa(y,z)} r\ell^{k\beta}\cdot \ell^{-\gam k} \, d\eta(y) \, d\eta(z) \\
& \lesssim r \Dint_{\R^2} \ell^{(\beta - \gam) \kappa(y,z)} \, d\eta(y) \, d\eta(z) \\
& \lesssim r \Dint_{\R^2} |y-z|^{-\frac{\beta - \gam}{\alpha}} \, d\eta(y) \, d\eta(z).
\end{align*}
But $\frac{\beta - \gam}{\alpha} < d_\eta$ by assumption, so the last integral converges by \eqref{cond4} and the fact that $\eta$ is compactly supported. Thus, $\Ik_2 \lesssim r$, and the proof of the proposition is complete.
\end{proof}

In later sections we will need a slightly stronger statement, namely the following:

\begin{prop}\label{p.main1}
Let $\eta$ be a compactly supported Borel probability measure on $\R$ of exact local dimension $d_\eta$. Suppose that for any $\varepsilon > 0$, there exists a subset $\Omega_\varepsilon \subset \Sigma_A^\ell$ such that $\mu(\Omega_\varepsilon) > 1 - \varepsilon$ and the following holds; there exist constants $C_1, C_2, C_3, \alpha, \beta, \gamma > 0$ and $k_0 \in \Z_+$ such that
\begin{equation}\label{e.1}
d_\eta + \frac{\gamma}{\beta} > 1\ \mbox{ and}\ \ d_\eta > \frac{\beta-\gam}{\alpha};
\end{equation}
\begin{equation}\label{e.2}
\max_{\lam \in J} |\phi_{\om,\tau}(\lam)| \le C_1 \ell^{-\alpha |\om \wedge \tau|} \ \ \mbox{ for all }\ \om, \tau \in \Omega_\varepsilon,\ \om \ne \tau;
\end{equation}
\begin{equation}\label{e.3}
\sup_{v \in \R} \Lk^1 \bigl( \{\lam\in J:\, |v+\phi_{\om,\tau}(\lam)| \le r\} \bigr) \le C_2 \ell^{|\om \wedge \tau| \beta} r\ \ \mbox{ for all }\ \om, \tau \in \Omega_\varepsilon,\ \om \ne \tau
\end{equation}
such that $|\om \wedge \tau| \ge k_0$, and
\begin{equation}\label{e.4}
\max_{u \in \Ak^{n}, [u]\cap \Omega_\varepsilon\ne \emptyset} \mu([u]) \le C_3\ell^{-\gamma n}\ \ \mbox{ for all } n\ge 1.
\end{equation}
Then, $\eta * \nula \ll \Lk^1$ for a.e.-$\lam \in J$.
\end{prop}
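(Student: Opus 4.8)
The plan is to reduce Proposition~\ref{p.main1} to Proposition~\ref{p.main} by a truncation/exhaustion argument in $\varepsilon$. Fix a summable sequence $\varepsilon_j \downarrow 0$ (say $\varepsilon_j = 2^{-j}$) and let $\Omega_j = \Omega_{\varepsilon_j}$ be the associated sets, with $\mu(\Omega_j) > 1 - \varepsilon_j$. For each $j$, define the restricted measure $\mu_j := \mu|_{\Omega_j}$ and its push-forward $\nu_\lam^{(j)} := \Pi_\lam(\mu_j)$. Since $\mu = \mu_j + \mu|_{\Sigma_A^\ell \setminus \Omega_j}$, we have $\nula = \nu_\lam^{(j)} + \Pi_\lam(\mu|_{\Sigma_A^\ell \setminus \Omega_j})$, and hence $\eta * \nula = \eta * \nu_\lam^{(j)} + \eta * \Pi_\lam(\mu|_{\Sigma_A^\ell\setminus\Omega_j})$. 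The idea is to show that each $\eta * \nu_\lam^{(j)}$ is absolutely continuous for a.e.\ $\lam \in J$, and then to pass to the limit $j \to \infty$ using that $\nula - \nu_\lam^{(j)}$ has total mass at most $\varepsilon_j \to 0$.

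\medskip

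For the first part, I would rerun the proof of Proposition~\ref{p.main} verbatim, but with $\mu_j$ in place of $\mu$ throughout. The point is that every estimate in that proof only ever evaluates $\phi_{\om,\tau}$ or $\mu([u])$ for $\om,\tau \in \supp(\mu_j) \subset \Omega_j$, respectively for cylinders $[u]$ meeting $\supp(\mu_j)$; these are exactly the quantities controlled by hypotheses \eqref{e.2}, \eqref{e.3}, \eqref{e.4}. Concretely: in bounding $\Ik_1$ we used \eqref{cond3} only through \eqref{est31}, i.e.\ through $\sum_{|v|=k}\mu([v])^2 \le \bigl(\max_{|v|=k}\mu([v])\bigr)\sum_{|v|=k}\mu([v])$; replacing $\mu$ by $\mu_j$, the only cylinders $[v]$ of length $k$ with $\mu_j([v]) \ne 0$ are those meeting $\Omega_j$, so $\max \mu_j([v]) \le \max_{[v]\cap\Omega_j\ne\es}\mu([v]) \le C_3\ell^{-\gam k}$ by \eqref{e.4}, while $\sum_{|v|=k}\mu_j([v]) = \mu_j(\Sigma_A^\ell) \le 1$. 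In bounding $\Ik_2$ we used \eqref{cond1} and \eqref{est31}, again only for pairs in the support of the measure, so \eqref{e.2} suffices. The measure $\eta$ enters only through \eqref{cond4}; but here $\eta$ is merely exact-dimensional, so first I would apply the standard truncation used to pass from \eqref{locdim} to \eqref{cond4} (mentioned in the text right before Proposition~\ref{p.main}): fix $\delta>0$, choose $C_4$ so that $\eta_\delta := \eta|_{G_\delta}$ with $G_\delta = \{x : \eta(B_r(x)) \le C_4 r^{d_\eta}\ \forall r\}$ has $\eta(G_\delta) > 1-\delta$, run the argument for $\eta_\delta * \nu_\lam^{(j)}$, and then let $\delta\downarrow 0$. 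Since the threshold conditions \eqref{e.1} are open and do not change under this truncation, Proposition~\ref{p.main} (or rather its proof) applies and gives $\eta * \nu_\lam^{(j)} \ll \Lk^1$ for a.e.\ $\lam \in J$.

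\medskip

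For the limiting step, let $Z_j \subset J$ be the null set off which $\eta * \nu_\lam^{(j)} \ll \Lk^1$, and put $Z = \bigcup_j Z_j$, still null. Fix $\lam \in J \setminus Z$ and a Borel set $E \subset \R$ with $\Lk^1(E) = 0$. Then for every $j$, $\eta*\nu_\lam^{(j)}(E) = 0$, so
$$
\eta*\nula(E) = \eta*\nula(E) - \eta*\nu_\lam^{(j)}(E) = \eta * \Pi_\lam\bigl(\mu|_{\Sigma_A^\ell\setminus\Omega_j}\bigr)(E) \le \bigl(\mu|_{\Sigma_A^\ell\setminus\Omega_j}\bigr)(\Sigma_A^\ell) \le \varepsilon_j,
$$
using that convolution with the probability measure $\eta$ does not increase total mass. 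Letting $j\to\infty$ gives $\eta*\nula(E) = 0$, hence $\eta*\nula \ll \Lk^1$ for every $\lam \in J\setminus Z$, which is the claim.

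\medskip

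I expect the only genuine subtlety to be bookkeeping: making sure that in the proof of Proposition~\ref{p.main}, applied to $\mu_j$, one never silently invokes a property of $\phi_{\om,\tau}$ or $\mu([u])$ outside the set where the restricted hypotheses \eqref{e.2}--\eqref{e.4} are available — in particular the split \eqref{split} and the estimate for $\Ik_2$ must be organized so that the sums over $k$ and the integrals over $[u]$ are against $\mu_j\times\mu_j$, whose support lies in $\Omega_j\times\Omega_j$. Everything else — the two truncations (in $\delta$ for $\eta$, in $\varepsilon_j$ for $\mu$) and the final mass-comparison limit — is routine. No new estimate beyond what is already in the proof of Proposition~\ref{p.main} is needed.
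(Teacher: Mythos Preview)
Your approach is essentially the same as the paper's: truncate $\eta$ (via Egorov) and $\mu$ (to $\Omega_\varepsilon$), rerun the proof of Proposition~\ref{p.main} on the truncated measures, and pass to the limit via the total-mass argument you wrote out (which is exactly the content of the lemma the paper states after its proof). One small correction: exact dimension $d_\eta$ does \emph{not} give you a set $G_\delta$ of large measure on which $\eta(B_r(x)) \le C_4 r^{d_\eta}$ --- you only get the bound with exponent $d_\eta - \delta$ (as the paper does), so your truncated $\eta_\delta$ should satisfy \eqref{cond4} with $d_\eta$ replaced by $d_\eta - \delta$; your remark that the inequalities in \eqref{e.1} are open is then what makes this harmless.
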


\begin{proof}
By Egorov's Theorem, we can find for any $\eps > 0$, a Borel set $S_\eps \subset \R$ such that $\eta(S_\eps) > 1 - \eps$ and $\eta_\eps := \frac{1}{\eta(S_\eps)} \cdot \eta|_{S_\eps}$ satisfies
$$
\eta_\eps (B_r(x)) \le C(\eps) r^{d_\eta - \eps}\ \ \mbox{for all $x \in \R,\ r > 0$},
$$
so that $\eta_\eps$ satisfies (\ref{cond4}), with $d_\eta$ replaced by $d_\eta - \eps$. The verbatim repetition of the proof of Proposition \ref{p.main} shows that if $\mu_\varepsilon$ is the restriction of the measure $\mu$ to the set $\Omega_\varepsilon$ and $\nu_{\lambda, \varepsilon} = \Pi_\lambda(\mu_\varepsilon)$, then $ \eta_\eps * \nu_{\lambda, \varepsilon} \ll \Lk^1\ \ \mbox{for a.e.-$\lam\in J$}$, for $\eps>0$ sufficiently small. Now Proposition~\ref{p.main1} follows from the following simple lemma, which we state without proof.
\end{proof}

\begin{lemma}
Let $\eta, \nu$ be compactly supported probability measures on $\R$. Suppose that for any $\varepsilon > 0$, there are subsets $\Omega_{\eta, \varepsilon}, \Omega_{\nu, \varepsilon} \subset \R$ such that $\eta(\Omega_{\eta, \varepsilon}) > 1 - \varepsilon$, $\nu(\Omega_{\nu, \varepsilon}) > 1 - \varepsilon$, and $\eta|_{\Omega_{\eta,  \varepsilon}} * \nu|_{\Omega_{\nu, \varepsilon}}$ is absolutely continuous with respect to Lebesgue measure. Then $\eta * \nu$ is absolutely continuous with respect to Lebesgue measure.
\end{lemma}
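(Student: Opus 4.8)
The plan is to approximate $\eta$ and $\nu$ from inside by the given subsets and then to combine a monotonicity observation with the Lebesgue decomposition of $\eta*\nu$. Fix a sequence $\varepsilon_n \downarrow 0$ and set $\eta_n := \eta|_{\Omega_{\eta,\varepsilon_n}}$ and $\nu_n := \nu|_{\Omega_{\nu,\varepsilon_n}}$ (the unnormalized restrictions). Since $\eta_n \le \eta$ and $\nu_n \le \nu$ as positive measures, monotonicity of the convolution yields $\eta_n * \nu_n \le \eta*\nu$; by hypothesis $\eta_n * \nu_n = \eta|_{\Omega_{\eta,\varepsilon_n}} * \nu|_{\Omega_{\nu,\varepsilon_n}} \ll \Lk^1$; and from $\eta(\Omega_{\eta,\varepsilon_n}) > 1-\varepsilon_n$, $\nu(\Omega_{\nu,\varepsilon_n}) > 1 - \varepsilon_n$ we get
$$
(\eta*\nu - \eta_n*\nu_n)(\R) \;=\; 1 - \eta(\Omega_{\eta,\varepsilon_n})\,\nu(\Omega_{\nu,\varepsilon_n}) \;<\; 1 - (1-\varepsilon_n)^2 \;\le\; 2\varepsilon_n .
$$

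Next I would write the Lebesgue decomposition $\eta*\nu = \mu_a + \mu_s$, where $\mu_a \ll \Lk^1$, $\mu_s \perp \Lk^1$, and both are positive finite measures. The point I would then record is the standard fact that any positive measure $\rho$ with $\rho \ll \Lk^1$ and $\rho \le \eta*\nu$ automatically satisfies $\rho \le \mu_a$: indeed, if $N$ is a Borel set with $\Lk^1(N) = 0$ carrying $\mu_s$, then $\rho(N) = 0$, so for every Borel set $E$,
$$
\rho(E) \;=\; \rho(E \setminus N) \;\le\; (\eta*\nu)(E\setminus N) \;=\; \mu_a(E\setminus N) \;\le\; \mu_a(E).
$$
Applying this with $\rho = \eta_n*\nu_n$ gives $\eta_n*\nu_n \le \mu_a$, hence $\mu_s = \eta*\nu - \mu_a \le \eta*\nu - \eta_n*\nu_n$ as (positive) measures.

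Combining the two paragraphs, $\mu_s(\R) \le (\eta*\nu - \eta_n*\nu_n)(\R) < 2\varepsilon_n$ for every $n$; letting $n \to \infty$ forces $\mu_s = 0$, i.e.\ $\eta*\nu = \mu_a \ll \Lk^1$, as claimed. I do not anticipate a genuine obstacle here: the whole argument is elementary measure theory and in fact does not use the compact support assumption (which is only there for notational convenience). The one step deserving a moment's attention is the domination-by-the-absolutely-continuous-part claim in the second paragraph, but that is entirely routine.
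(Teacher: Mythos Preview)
Your proof is correct. The paper actually states this lemma \emph{without proof}, calling it a ``simple lemma,'' so there is nothing to compare against; your argument via monotonicity of convolution, the Lebesgue decomposition, and the domination $\eta_n*\nu_n \le \mu_a$ is exactly the kind of elementary verification the authors had in mind, and your observation that compact support is not needed is also correct.
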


\section{Convolutions of Hyperbolic Measures}\label{sec.3}

We start with the following {\it standing assumptions for this section}. Suppose $J \subset \R$ is a compact interval, and  $f_{\lambda} : M^2 \to M^2$, $\lambda \in J$, is a smooth family of smooth surface diffeomorphisms. Specifically, we require $f_\lambda(p)$ to be $C^2$-smooth with respect to both $\lambda$ and $p$, with a finite $C^2$-norm. Also, we assume that $f_{\lambda} : M^2 \to M^2$, $\lambda \in J$, has a locally maximal transitive totally disconnected hyperbolic set $\Lambda_\lambda$ that depends continuously on the parameter.

Let $\gamma_\lambda : \R \to M^2$ be a family of smooth curves, smoothly depending on the parameter, and $L_\lambda = \gamma_\lambda(\R)$. Suppose that the stable manifolds of $\Lambda_\lambda$ are transversal to $L_\lambda$.

\begin{lemma}\label{l.pi}
There is a Markov partition of $\Lambda_\lambda$ and a continuous family of projections $\pi_\lambda:\Lambda_\lambda\to L_\lambda$ along stable manifolds of $\Lambda_\lambda$ such that for any two distinct elements of the Markov partition, their images under $\pi_\lambda$ are disjoint.
\end{lemma}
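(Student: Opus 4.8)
The plan is to produce a Markov partition of $\Lambda_\lambda$ carrying an extra ``stable saturation'' property, to define $\pi_\lambda$ from the holonomy of the local stable lamination onto $L_\lambda$, and then to read off disjointness of the images directly from the saturation property; continuity in $\lambda$ will come from structural stability.

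\emph{Step 1 (a saturated genuine Markov partition).} Using that $\Lambda_\lambda$ is a locally maximal, transitive, totally disconnected hyperbolic set of the surface diffeomorphism $f_\lambda$, I would choose a Markov partition $\{R_1(\lambda),\dots,R_m(\lambda)\}$ into rectangles with local product structure such that (i) the sets $R_i(\lambda)\cap\Lambda_\lambda$ are pairwise \emph{disjoint} (not merely with disjoint interiors), and (ii) each $R_i(\lambda)$ is \emph{saturated by local stable plaques}: for every $x\in R_i(\lambda)\cap\Lambda_\lambda$ the whole plaque $W^s_{\mathrm{loc}}(x)\cap\Lambda_\lambda$ lies in $R_i(\lambda)$. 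Both properties are available because $\Lambda_\lambda$ is totally disconnected: for (i), $\Lambda_\lambda$ has a basis of clopen rectangles, so one covers it by finitely many small clopen rectangles, refines against an arbitrary Markov partition, and saturates under $f_\lambda^{\pm 1}$ to recover the Markov property (the boundaries introduced miss the invariant set $\Lambda_\lambda$); for (ii), one makes the unstable boundaries of the rectangles — arcs of unstable manifolds — lie just outside $\Lambda_\lambda$, with local stable plaques taken small enough that a rectangle's stable extent covers the full plaques of the points of $\Lambda_\lambda$ it contains. Since $f_\lambda$ and $\gamma_\lambda$ are $C^2$ in $\lambda$ and $\Lambda_\lambda$ varies continuously, structural stability allows this to be done continuously in $\lambda$.

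\emph{Step 2 (the projection and the disjointness of images).} The local stable manifolds of points of $\Lambda_\lambda$ assemble into a continuous lamination near $\Lambda_\lambda$; by the standing transversality of the stable manifolds of $\Lambda_\lambda$ to $L_\lambda$ and the compactness of $\Lambda_\lambda$ (which yields a uniform lower bound on the intersection angle), each plaque $W^s_{\mathrm{loc}}(x)$ meets $L_\lambda$ transversally in exactly one point, and I set $\pi_\lambda(x)$ equal to that point. Continuous dependence of local stable manifolds on the base point gives continuity of $\pi_\lambda$, and the stable manifold theorem with parameters gives continuity in $\lambda$, so $\{\pi_\lambda\}_{\lambda\in J}$ is a continuous family. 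For the disjointness, fix $i\ne j$ and suppose $\pi_\lambda(p)=\pi_\lambda(q)$ with $p\in R_i(\lambda)\cap\Lambda_\lambda$ and $q\in R_j(\lambda)\cap\Lambda_\lambda$. Since distinct plaques of the lamination are disjoint and each meets $L_\lambda$ in a single point, this forces $W^s_{\mathrm{loc}}(p)=W^s_{\mathrm{loc}}(q)$, hence $q\in W^s_{\mathrm{loc}}(p)\cap\Lambda_\lambda$; by the saturation property (ii) this gives $q\in R_i(\lambda)$, contradicting (i). Therefore $\pi_\lambda\bigl(R_i(\lambda)\cap\Lambda_\lambda\bigr)$ and $\pi_\lambda\bigl(R_j(\lambda)\cap\Lambda_\lambda\bigr)$ are disjoint.

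\emph{Main obstacle.} The delicate step is 1(ii): securing a Markov partition whose elements are genuinely disjoint in $\Lambda_\lambda$ and simultaneously saturated by local stable plaques, uniformly and continuously in $\lambda$ — and, in particular, matching the size of the plaques used both to the geometry of the partition and to the position of $L_\lambda$ relative to $\Lambda_\lambda$. This is exactly where total disconnectedness of $\Lambda_\lambda$ is essential: for a hyperbolic set containing whole arcs of stable or unstable manifolds (e.g.\ an Anosov diffeomorphism) no partition with property (ii) can exist. The existence of the holonomy onto $L_\lambda$ in Step 2, by contrast, is built into the standing assumptions and needs only routine compactness.
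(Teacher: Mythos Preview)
There is a genuine gap in Step~2. You assert that ``each plaque $W^s_{\mathrm{loc}}(x)$ meets $L_\lambda$ transversally in exactly one point,'' and you justify this from the standing transversality assumption together with compactness of $\Lambda_\lambda$. But the standing hypothesis only says that the stable manifolds of $\Lambda_\lambda$ are transversal to $L_\lambda$ \emph{where they meet}; it does not say that $L_\lambda$ passes through a uniform local-stable neighborhood of every point of $\Lambda_\lambda$. Compactness gives a uniform lower bound on the angle at intersection points, not the existence of intersections. In the intended application (and in the generality of the section) $\Lambda_\lambda$ can sit well away from $L_\lambda$, so for many $x\in\Lambda_\lambda$ the short plaque $W^s_{\mathrm{loc}}(x)$ simply does not reach $L_\lambda$, and your holonomy map $\pi_\lambda$ is undefined there. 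You cannot repair this by enlarging the plaques: once you follow the global stable leaf far enough to hit $L_\lambda$, a single leaf will typically meet $L_\lambda$ in many points (so the projection is no longer canonical), and the ``saturation'' demanded in Step~1(ii) would force different Markov rectangles lying on the same long stable leaf to coalesce, destroying (i).

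The paper sidesteps this by turning the non-uniqueness into a resource rather than fighting it. It fixes an arbitrary Markov partition $\{M_1,\dots,M_\ell\}$ with rectangles $R_i$, observes that for every $m$ the sets $\{f_\lambda^{-m}(M_i)\}$ again form a Markov partition, and uses transitivity (hence density of each stable manifold in $\Lambda_\lambda$) to ensure that for $m$ large each stretched rectangle $f_\lambda^{-m}(R_i)$ meets $L_\lambda$ in at least $\ell$ distinct components. With $\ell$ landing components available for each of the $\ell$ elements, one simply \emph{chooses} a different component for each $M_i$ and defines $\pi_\lambda$ on $M_i$ by stable holonomy into that component; disjointness of the images is then arranged by construction rather than deduced from a saturation property. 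So the missing idea in your approach is precisely this use of backward iteration and density of stable leaves to manufacture enough intersections with $L_\lambda$ to separate the projections.
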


\begin{proof}
Let us fix some Markov partition $\Lambda_\lambda=M_1\sqcup M_2 \sqcup \ldots \sqcup M_\ell$ together with the corresponding rectangles $R_i\supset M_i$, $i=1, \ldots, \ell$, whose boundaries are formed by pieces of stable and unstable manifolds of $\Lambda_\lambda$. Then for any $m \in \mathbb{N}$, the partition $\Lambda_\lambda = f_{\lambda}^{-m}(M_1)\sqcup f_{\lambda}^{-m}(M_2) \sqcup \ldots \sqcup f_{\lambda}^{-m}(M_\ell)$ is also a Markov partition. Since $\Lambda_\lambda$ is a transitive hyperbolic set, any stable manifold is dense in $\Lambda_\lambda$, and hence the number of components of the intersection of the preimages $f_{\lambda}^{-m}(R_i)$ with $L_\lambda$ tends to infinity as $m \to \infty$. Take $m \in \mathbb{N}$ large enough to guarantee that $f_{\lambda}^{-m}(R_i)$ intersects $L_\lambda$ at least $\ell$ times for each $i=1, \ldots, \ell$. Then one can choose the family of projections $\pi_\lambda$ along stable manifolds inside of $\{R_i\}$ in such a way that all the images $\pi_\lambda(M_i), i=1, \ldots, \ell$, are pairwise disjoint.
\end{proof}

%\pagebreak

Suppose $\sigma_A : \Sigma^\ell_A \to \Sigma^\ell_A$ is a topological Markov chain, which for every $\lambda \in J$ is conjugated to $f_{\lambda} : \Lambda_{\lambda} \to \Lambda_{\lambda}$ via the conjugacy $H_{\lambda} : \Sigma^\ell_A \to \Lambda_{\lambda}$. That is, the following diagram commutes for all $\lambda \in J$:

\begin{equation*}
  \xymatrix@R+2em@C+2em{
 \Sigma^\ell_A  \ar[r]^-{\sigma_A} \ar[d]_-{H_{\lambda}} & \Sigma^\ell_A  \ar[d]^-{H_{\lambda}} \\
   \Lambda_{\lambda} \ar[r]_-{f_{\lambda}} & \Lambda_{\lambda}
  }
\end{equation*}

Let $\mu$ be an ergodic probability measure for $\sigma_A : \Sigma_A^\ell \to \Sigma_A^\ell$ such that $h_{\mu}(\sigma_A)>0$. Set $\mu_\lambda = H_\lambda (\mu)$, then $\mu_\lambda$ is an ergodic invariant measure for $f_\lambda : \Lambda_\lambda \to \Lambda_\lambda$.

Let $\pi_\lambda : \Lambda_\lambda \to L_\lambda$ be a continuous family of continuous projections along the stable manifolds of $\Lambda_\lambda$ provided by Lemma \ref{l.pi}. Set $\nu_\lambda = \gamma^{-1}_\lambda \circ \pi_\lambda (\mu_\lambda) = \gamma^{-1}_\lambda \circ \pi_\lambda \circ H_\lambda (\mu)$. Compare Figure~1.

\begin{figure}\label{fig.1}
\begin{center}
   \includegraphics[scale=1]{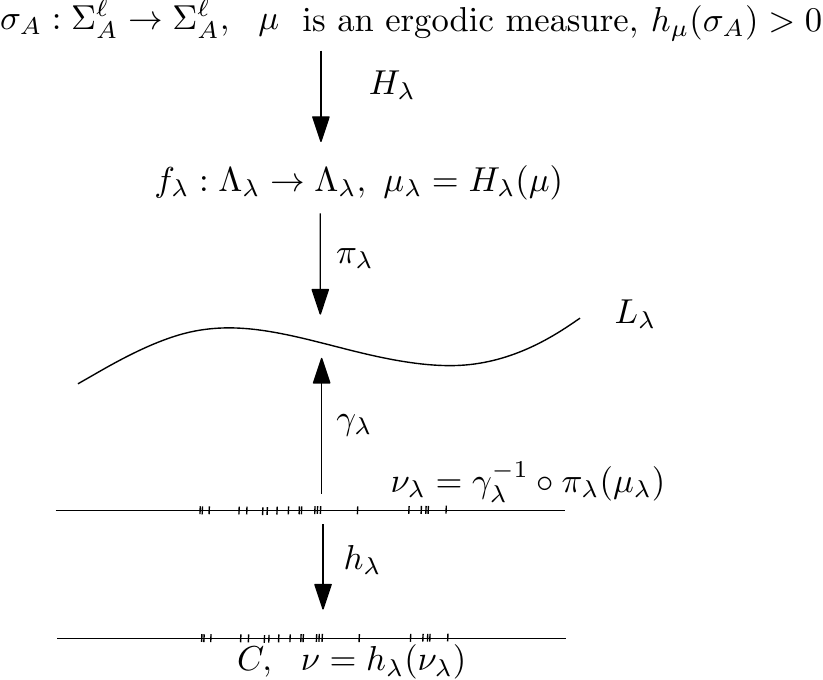}
   \end{center}
\caption{The relevant spaces, maps, and measures in Section~\ref{sec.3}.}
\end{figure}

\medskip

In this setting the following theorem holds.

\begin{theorem}\label{t.ac}
Suppose that the unstable Lyapunov exponent $Lyap^{u}(\mu_\lambda)$ of $f_{\lambda}$ with respect to the measure  $\mu_{\lambda}$ is a non-constant analytic function of $\lambda$. Then for any compactly supported exact-dimensional measure $\eta$ on $\R$ with $$\dim_H \eta + \dim_H \nu_\lambda > 1$$  for all $\lambda\in J$, the convolution $\eta * \nu_\lambda$ is absolutely continuous with respect to Lebesgue measure for almost every $\lambda \in J$.
\end{theorem}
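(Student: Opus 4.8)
The plan is to deduce Theorem~\ref{t.ac} from Proposition~\ref{p.main1} by verifying its hypotheses for the symbolic coding of the projected hyperbolic measure. Fix a Markov partition and projections $\pi_\lambda$ as in Lemma~\ref{l.pi}, and let $\Pi_\lambda = \gamma_\lambda^{-1}\circ\pi_\lambda\circ H_\lambda : \Sigma_A^\ell \to \R$, so that $\nu_\lambda = \Pi_\lambda(\mu)$ exactly as in Section~2. The disjointness property in Lemma~\ref{l.pi} is crucial: it means that two points $\om,\tau$ whose common prefix has length $n$ are projected into the $\pi_\lambda$-images of two distinct cylinders of generation $n$, which under hyperbolicity shrink exponentially; this is what will produce condition \eqref{e.2}. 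First I would record the standard bounded-distortion / uniform hyperbolicity estimates: there are constants $0<c<C$ and $0<\underline{\mu}<\overline{\mu}<1$ (related to the minimal and maximal unstable contraction rates of $f_\lambda^{-1}$, uniformly in $\lambda\in J$) so that the diameter of the image under $\Pi_\lambda$ of an $n$-cylinder is comparable to the product of the unstable derivatives along the orbit of length $n$, hence squeezed between $c\,\overline{\mu}^{\,n}$ and $C\,\underline{\mu}^{\,n}$. Writing $\ell = |\Ak|$, choose $\alpha>0$ with $\overline{\mu} \le \ell^{-\alpha}$; then $|\phi_{\om,\tau}(\lam)| \le \diam \Pi_\lam([\om\wedge\tau]) \le C_1 \ell^{-\alpha|\om\wedge\tau|}$, giving \eqref{e.2} on all of $\Sigma_A^\ell$.

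Next I would produce the set $\Omega_\eps$ and the exponents $\beta,\gamma$ using the ergodic theory of $\mu$. Since $h_\mu(\sigma_A)>0$, the Shannon--McMillan--Breiman theorem gives, for $\mu$-a.e.\ $\om$, that $-\frac{1}{n}\log\mu([\om_0\ldots\om_{n-1}]) \to h_\mu$; and the Birkhoff/Oseledets theorem gives $\frac{1}{n}\log\|Df_\lambda^n|_{E^u}\| \to \mathrm{Lyap}^u(\mu_\lambda)$ along $\mu$-a.e.\ orbit. By Egorov, for each $\eps>0$ there is a set $\Omega_\eps$ with $\mu(\Omega_\eps)>1-\eps$ on which both convergences are uniform, so that for $n$ large, cylinders meeting $\Omega_\eps$ have $\mu$-measure $\le C_3 \ell^{-\gamma n}$ with $\gamma$ close to $h_\mu/\log\ell$, and $\Pi_\lambda$-images of such cylinders have diameter comparable to $\ell^{-\beta n}$ with $\beta$ close to $\mathrm{Lyap}^u(\mu_\lambda)/\log\ell$ — yielding \eqref{e.4} and feeding into \eqref{e.3}. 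The inequality $d_\eta + \gamma/\beta > 1$ of \eqref{e.1} should then follow from the hypothesis $\dim_H \eta + \dim_H \nu_\lambda > 1$, because $\dim_H \nu_\lambda = \gamma/\beta = h_\mu/\mathrm{Lyap}^u(\mu_\lambda)$ is the Ledrappier–Young / Hausdorff dimension of the projected measure (here one uses that $\pi_\lambda$ is along stable manifolds transversal to $L_\lambda$, so projecting preserves the unstable conditional dimension); the second inequality $d_\eta > (\beta-\gamma)/\alpha$ holds because $\beta - \gamma < \beta \le \alpha\log\ell/\log\ell$ can be made small relative to $\alpha d_\eta$ by refining the Markov partition (replacing $f_\lambda$ by an iterate), which rescales $\alpha,\beta,\gamma$ together while $\alpha$ can be taken as large as desired — so after passing to a high iterate the ratio $(\beta-\gamma)/\alpha$ drops below $d_\eta$. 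I would make this iterate reduction explicit early on, since it simultaneously trivializes the second half of \eqref{e.1}.

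The genuine obstacle is condition \eqref{e.3}, the transversality-type estimate: I need a lower bound of the form $|\frac{d}{d\lam}\phi_{\om,\tau}(\lam)| \gtrsim \ell^{-\beta|\om\wedge\tau|}$ on $\Omega_\eps$, equivalently the non-degeneracy of the parameter-dependence of the difference of projections at the scale of a cylinder. This is where the hypothesis that $\mathrm{Lyap}^u(\mu_\lambda)$ is \emph{non-constant analytic} enters, and it is the heart of the matter. The idea is: $\phi_{\om,\tau}(\lam)$ is, up to a controlled factor, the displacement of $\Pi_\lam(\om)$ inside the cylinder $\Pi_\lam([\om\wedge\tau])$ of size $\asymp \ell^{-\beta|\om\wedge\tau|}$; its $\lambda$-derivative, after normalizing by the cylinder size, is governed by the derivative of the holonomy/conjugacy with respect to $\lambda$, which in turn is controlled by $\frac{d}{d\lambda}\mathrm{Lyap}^u$. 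Analyticity upgrades "non-constant" to "bounded multiplicity of zeros," so the bad parameter set where the normalized derivative is small has small measure, and one can cover $J$ by finitely many subintervals on each of which a uniform lower bound holds — on each such subinterval Proposition~\ref{p.main1} applies, and the finite union still has full measure. Concretely I expect to realize $\frac{d}{d\lambda}\phi_{\om,\tau}$ as (cylinder size)$\times$(a cocycle-type sum whose leading behavior is $\sum_{j} \frac{d}{d\lambda}\log\|Df_\lambda|_{E^u}\|$ along the orbit), whose normalized average converges to $\frac{d}{d\lambda}\mathrm{Lyap}^u(\mu_\lambda)\neq 0$ on $\Omega_\eps$; the technical work is controlling the error terms (distortion, the $\lambda$-variation of stable holonomies and of the curve $\gamma_\lambda$) uniformly, and handling the finitely many exceptional parameters where $\frac{d}{d\lambda}\mathrm{Lyap}^u$ vanishes via the analyticity-based covering argument. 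Everything else — passing from $\eta_\eps$ back to $\eta$, from $\mu_\eps$ back to $\mu$ — is handled verbatim by Proposition~\ref{p.main1} and its concluding lemma.
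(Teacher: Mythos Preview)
Your overall strategy matches the paper's: reduce to Proposition~\ref{p.main1}, use analyticity of $\mathrm{Lyap}^u(\mu_\lambda)$ to partition $J$ into subintervals where $|\frac{d}{d\lambda}\mathrm{Lyap}^u|\ge\delta>0$, produce $\Omega_\eps$ via Egorov applied to Shannon--McMillan--Breiman and to Birkhoff averages of $\log\|Df_\lambda|_{E^u}\|$, and realize $\frac{d}{d\lambda}\phi_{\om,\tau}$ as (cylinder size)$\times$(an ergodic sum whose normalized limit is $\frac{d}{d\lambda}\mathrm{Lyap}^u$). The paper makes the last step rigorous via a nonlinear chain-rule computation (its Lemma~\ref{l.new04}), which is exactly the ``cocycle-type sum'' you describe; your sketch of \eqref{e.3} is on target.

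There is, however, a genuine gap in your handling of $\alpha$ and the second inequality in \eqref{e.1}. Your $\alpha$ is taken from the \emph{uniform} worst-case contraction $\overline\mu$, so it satisfies $\alpha\le -\log\overline\mu/\log\ell$, which can be much smaller than $\mathrm{Lyap}^u(\mu_\lambda)/\log\ell\approx\beta$. With $\beta,\gamma$ close to $\mathrm{Lyap}^u/\log\ell$ and $h_\mu/\log\ell$ respectively, $(\beta-\gamma)/\alpha\approx(\mathrm{Lyap}^u-h_\mu)/(\alpha\log\ell)$ can then be large, and there is no reason it should lie below $d_\eta$. Your proposed fix by passing to an iterate does not work: replacing $f_\lambda$ by $f_\lambda^N$ multiplies $\mathrm{Lyap}^u$, $h_\mu$, and $-\log\overline\mu$ (hence $\alpha,\beta,\gamma$, after adjusting for the new alphabet size) by the same factor, so the ratio $(\beta-\gamma)/\alpha$ is scale-invariant. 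The correct move, which the paper makes, is to verify \eqref{e.2} \emph{only on $\Omega_\eps$}, where uniform convergence of Birkhoff averages lets you take $\alpha$ arbitrarily close to $\mathrm{Lyap}^u(\mu_\lambda)/\log\ell$ (just below it). Then $\alpha,\beta$ are both close to $\mathrm{Lyap}^u/\log\ell$ and $\gamma$ close to $h_\mu/\log\ell$, so $(\beta-\gamma)/\alpha$ is close to $1-\dim_H\nu_\lambda<d_\eta$, and both halves of \eqref{e.1} follow directly from the hypothesis $d_\eta+\dim_H\nu_\lambda>1$ without any iterate trick.
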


\begin{remark}
In the next section we will apply this statement to the Trace Map with the measures $\mu_\lambda$ being measures of maximal entropy associated with the Trace Map, and the measure $\mu$ is going to be the measure of maximal entropy for the corresponding symbolic dynamical system.
\end{remark}

\begin{remark}
The same statement holds if instead of projections of the measure $\mu$, we consider projections of the restrictions of the measures $\mu$ to an element of a Markov partition for $\Sigma_A^\ell$.
\end{remark}

\begin{remark}
The assumption on the analyticity of the Lyapunov exponents is satisfied, in particular, in the case of an analytic family of polynomial diffeomorphisms of a surface; see Proposition \ref{p.anal} below. Notice that this is the case for the family of Trace Maps that we will consider to get the result on the absolute continuity of the density of states measures of the square Fibonacci Hamiltonian.
\end{remark}

\begin{remark}
For the application of Theorem~\ref{t.ac} to the square Fibonacci Hamiltonian presented in the next section, we only need this theorem for the case where $L_\lambda$ is a line and the map $\gamma_\lambda$ is affine. Making this stronger assumption would simplify some steps of the proof somewhat. We state and prove Theorem~\ref{t.ac} in the more general case at hand because it will be applicable to other separable models, arising for example from products of the continuum Fibonacci operator \cite{DFG} or extended CMV matrices with Fibonacci Verblunsky coefficients \cite{DMY}, where the so-called curve of initial conditions {\rm (}the $L_\lambda$ in the setting of the present section{\rm )} is not a line.
\end{remark}

First of all, let us notice that since $Lyap^u(\mu_\lambda)$ is non-constant and analytic in $\lambda$, the derivative $\frac{d}{d\lambda} Lyap^u(\mu_\lambda)$ can have at most a finite number of zeros in $J$, and therefore $J$ can be represented as $J = J_1 \cup J_2 \cup \cdots \cup J_N$, where $\mathrm{int} \, J_i \cap \mathrm{int} \, J_j = \emptyset$ if $i \ne j$, and $\frac{d}{d\lambda} Lyap^u(\mu_\lambda)$ does not vanish in $\mathrm{int} \, J_i$, $i=1, \ldots, N.$ Also, for any $i = 1, \ldots, N$, we can represent $\mathrm{int} \, J_i = \bigcup_{l=1}^{\infty} J_{i,l}$, $J_{i,1} \subset J_{i,2} \subset \cdots$, where $J_{i,l}$ is a compact interval such that $\left| \frac{d}{d\lambda} Lyap^{u}(\mu_\lambda) \right| \ge \delta_l > 0$ for all $\lambda \in J_{i,l}$ and some $\delta_l > 0$. Therefore Theorem \ref{t.ac} follows from the following statement.

\begin{theorem}\label{t.ac2}
%There exists $d < 1$ such that the following holds.
Suppose that $J$ is a compact interval so that $\left| \frac{d}{d\lambda} Lyap^{u}(\mu_\lambda) \right| \ge \delta > 0$ for some $\delta > 0$ and all $\lambda \in J$. Then for any compactly supported exact-dimensional measure $\eta$ on $\R$ with $$\dim_H \eta + \dim_H \nu_\lambda > 1$$  for all $\lambda \in J$, % $\dim_H (\eta) = d_\eta >  1-\inf_{\lambda\in J} \dim_H(\mu_\lambda)$,
  the convolution $\eta * \nu_\lambda$ is absolutely continuous with respect to Lebesgue measure for almost every $\lambda \in J$.
\end{theorem}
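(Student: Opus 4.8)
The plan is to derive Theorem~\ref{t.ac2} from Proposition~\ref{p.main1}, applied with $\Pi_\lambda := \gamma_\lambda^{-1}\circ\pi_\lambda\circ H_\lambda : \Sigma_A^\ell\to\R$, so that $\Pi_\lambda(\mu)=\nu_\lambda$ as required there. Since the hypothesis $|\frac{d}{d\lambda}Lyap^u(\mu_\lambda)|\ge\delta$ is inherited by subintervals and the conclusion is local in $\lambda$ (exceptional sets over a finite cover of $J$ union to a null set), I first reduce to the case where $J$ is arbitrarily short; in particular the oscillation of $\lambda\mapsto Lyap^u(\mu_\lambda)$ on $J$ is as small as we please, so that below I may treat $Lyap^u:=Lyap^u(\mu_\lambda)$ as constant on $J$, the oscillation being absorbed into an auxiliary $\varepsilon>0$. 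The structural fact to record first is the ``cookie-cutter'' description of $\Pi_\lambda$: using the Markov property together with the transversality of stable manifolds to $L_\lambda$ furnished by Lemma~\ref{l.pi}, the restriction of $\Pi_\lambda$ to a cylinder $[\omega_0\ldots\omega_{n-1}v]$ equals $g_{\omega_0\ldots\omega_{n-1},\lambda}$ applied to $\Pi_\lambda|_{[v]}$, where $g_{\omega_0\ldots\omega_{n-1},\lambda}=g_{\omega_0,\lambda}\circ\cdots\circ g_{\omega_{n-1},\lambda}$ is a composition of inverse branches of the projected dynamics; by the $C^2$ hypothesis on the family these branches are $C^1$ in $\lambda$ and $p$ and satisfy uniform bounded-distortion estimates. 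Writing $S^u_n(\lambda,\omega):=-\log|g'_{\omega_0\ldots\omega_{n-1},\lambda}(\Pi_\lambda(\sigma_A^n\omega))|$ (which coincides up to a uniformly bounded error with $\log\|Df^n_\lambda|_{E^u_\lambda}(H_\lambda\omega)\|$), bounded distortion gives $|g'_{\omega_0\ldots\omega_{n-1},\lambda}|\asymp e^{-S^u_n(\lambda,\omega)}$ on the relevant range, and its ``parameter version'' (H\"older continuity of the single-branch cocycle plus a geometric-series estimate, using the $C^2$ dependence) gives $\partial_\lambda\log|g'_{\omega_0\ldots\omega_{n-1},\lambda}|=-\frac{d}{d\lambda}S^u_n(\lambda,\omega)+O(1)$, uniformly.

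The metric inputs come from transporting everything to the fixed symbolic space $\Sigma_A^\ell$. By the chain rule for the composition, $S^u_n(\lambda,\omega)$ is an honest Birkhoff sum $\sum_{j=0}^{n-1}\psi_\lambda(\sigma_A^j\omega)$ of a single-branch function $\psi_\lambda$ that is $C^1$ in $\lambda$ (with dominated derivative) and H\"older in $\omega$, uniformly over $J$; hence $\frac{d}{d\lambda}S^u_n(\lambda,\omega)=\sum_{j=0}^{n-1}(\partial_\lambda\psi_\lambda)(\sigma_A^j\omega)$ is also a Birkhoff sum, $\frac1n S^u_n\to\int\psi_\lambda\,d\mu=Lyap^u$, and $\frac1n\frac{d}{d\lambda}S^u_n\to\int\partial_\lambda\psi_\lambda\,d\mu=\frac{d}{d\lambda}Lyap^u(\mu_\lambda)$ for $\mu$-a.e.\ $\omega$. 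Invoking the Birkhoff and Shannon--McMillan--Breiman theorems for $\mu$, Egorov's theorem, and a routine equicontinuity-in-$\lambda$ argument (using compactness of $J$), I obtain for each $\varepsilon>0$ a set $\Omega_\varepsilon\subset\Sigma_A^\ell$ with $\mu(\Omega_\varepsilon)>1-\varepsilon$ and a $k_0$ such that for all $\omega\in\Omega_\varepsilon$, $n\ge k_0$, and $\lambda\in J$ one has $(Lyap^u-\varepsilon)n\le S^u_n(\lambda,\omega)\le(Lyap^u+\varepsilon)n$, $|\frac{d}{d\lambda}S^u_n(\lambda,\omega)|\ge(\delta-\varepsilon)n$, and $\mu([\omega_0\ldots\omega_{n-1}])\le e^{-(h_\mu-\varepsilon)n}$. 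The last estimate is precisely \eqref{e.4} with $\gamma=(h_\mu-\varepsilon)/\log\ell$.

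Now fix $\omega,\tau\in\Omega_\varepsilon$ with $n:=|\omega\wedge\tau|\ge k_0$, put $u=\omega_0\ldots\omega_{n-1}$, $a_\lambda=\Pi_\lambda(\sigma_A^n\omega)$, $b_\lambda=\Pi_\lambda(\sigma_A^n\tau)$; since $\sigma_A^n\omega$ and $\sigma_A^n\tau$ begin with distinct symbols, Lemma~\ref{l.pi} gives $c_0\le|a_\lambda-b_\lambda|\le C$ uniformly in $\lambda$. From $\phi_{\omega,\tau}(\lambda)=g_{u,\lambda}(a_\lambda)-g_{u,\lambda}(b_\lambda)$ and $|g'_{u,\lambda}|\asymp e^{-S^u_n}$ we get $|\phi_{\omega,\tau}(\lambda)|\le C_1 e^{-(Lyap^u-\varepsilon)n}$ on $\Omega_\varepsilon$, i.e.\ \eqref{e.2} with $\alpha=(Lyap^u-\varepsilon)/\log\ell$. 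Differentiating, $\frac{d}{d\lambda}\phi_{\omega,\tau}(\lambda)=[\partial_\lambda g_{u,\lambda}(a_\lambda)-\partial_\lambda g_{u,\lambda}(b_\lambda)]+[g'_{u,\lambda}(a_\lambda)\dot{a}_\lambda-g'_{u,\lambda}(b_\lambda)\dot{b}_\lambda]$. Here $\dot{a}_\lambda,\dot{b}_\lambda$ are bounded by the standard smooth dependence of hyperbolic continuations on the parameter ($\lambda\mapsto H_\lambda(\omega)$ is $C^1$ with derivative bounded uniformly in $\omega$), so the second bracket is $O(e^{-S^u_n})$ because $|g'_{u,\lambda}|\asymp e^{-S^u_n}$; the first bracket equals $\int_{b_\lambda}^{a_\lambda}\partial_\lambda g'_{u,\lambda}(s)\,ds$, and by the parameter distortion identity $\partial_\lambda g'_{u,\lambda}(s)=g'_{u,\lambda}(s)(-\frac{d}{d\lambda}S^u_n(\lambda,\omega)+O(1))$; once $n$ is large the term of size $\ge(\delta-\varepsilon)n$ dominates the $O(1)$, so $\partial_\lambda g'_{u,\lambda}(s)$ keeps a constant sign over the integration interval and the first bracket has magnitude $\asymp n\,e^{-S^u_n}|a_\lambda-b_\lambda|\gtrsim n\,e^{-S^u_n}$. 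Hence, for $n$ large, $|\frac{d}{d\lambda}\phi_{\omega,\tau}(\lambda)|\gtrsim n\,e^{-S^u_n}\ge e^{-(Lyap^u+\varepsilon)n}$ on $\Omega_\varepsilon$, and $\phi_{\omega,\tau}$ is then strictly monotone on $J$; by the remark following Proposition~\ref{p.main} this gives \eqref{e.3} with $\beta=(Lyap^u+\varepsilon)/\log\ell$ (after enlarging $k_0$ to absorb the finitely many smaller $n$). Finally, with these choices $\gamma/\beta=\frac{h_\mu-\varepsilon}{Lyap^u+\varepsilon}$ and $\frac{\beta-\gamma}{\alpha}=\frac{Lyap^u+2\varepsilon-h_\mu}{Lyap^u-\varepsilon}$, which tend to $\frac{h_\mu}{Lyap^u}$ and $1-\frac{h_\mu}{Lyap^u}$ as $\varepsilon\downarrow0$; since by the dimension theory of hyperbolic measures (Ledrappier--Young, or a direct mass-distribution argument using the estimates above) $\dim_H\nu_\lambda\le\frac{h_\mu}{Lyap^u}$, the hypothesis $\dim_H\eta+\dim_H\nu_\lambda>1$ forces, for $\varepsilon$ small enough, both $d_\eta+\gamma/\beta>1$ and $d_\eta>\frac{\beta-\gamma}{\alpha}$. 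Proposition~\ref{p.main1} then yields $\eta*\nu_\lambda\ll\Lk^1$ for a.e.\ $\lambda\in J$.

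The main obstacle is the transversality estimate \eqref{e.3}. Two points require real care: establishing the ``bounded distortion in the parameter'' identity $\partial_\lambda\log|g'_{u,\lambda}|=-\frac{d}{d\lambda}S^u_n+O(1)$, which is exactly what isolates $\frac{d}{d\lambda}S^u_n$ as the dominant contribution to $\frac{d}{d\lambda}\phi_{\omega,\tau}$ and relegates every competing term to order $O(e^{-S^u_n})$; and upgrading the a.e.\ convergences $\frac1n S^u_n\to Lyap^u(\mu_\lambda)$ and $\frac1n\frac{d}{d\lambda}S^u_n\to\frac{d}{d\lambda}Lyap^u(\mu_\lambda)$ so that they hold uniformly in $\omega\in\Omega_\varepsilon$ and in $\lambda\in J$ simultaneously. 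The non-constancy of $\frac{d}{d\lambda}Lyap^u(\mu_\lambda)$ — hence, after passing to a short $J$, its non-vanishing — enters precisely here: it is what provides the lower bound $|\frac{d}{d\lambda}S^u_n|\gtrsim n$ that beats the $O(e^{-S^u_n})$ error terms and produces genuine transversality.
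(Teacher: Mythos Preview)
Your approach is correct and follows the same overall strategy as the paper: verify the hypotheses of Proposition~\ref{p.main1} with $\Pi_\lambda=\gamma_\lambda^{-1}\circ\pi_\lambda\circ H_\lambda$, using Egorov together with Birkhoff and Shannon--McMillan--Breiman to produce $\Omega_\varepsilon$, and obtaining the transversality estimate \eqref{e.3} by showing that the $\lambda$-derivative of the $n$-fold composition of inverse branches is dominated by a term of size $\asymp n\cdot\frac{d}{d\lambda}Lyap^u(\mu_\lambda)$ times the contraction. The paper organizes the same computation differently: rather than working with cookie-cutter inverse branches $g_{u,\lambda}$ on $\R$ (which involve the stable holonomy), it works on the forward-iterated curves $f_\lambda^n(L_\lambda)$ and with the genuinely $C^2$ maps $k_\lambda^{(i)}=f_\lambda^{-1}|_{f_\lambda^i(L_\lambda)}$; Proposition~\ref{p.linelyap} transfers the derivative lower bound from $\|Df_\lambda|_{E^u}\|$ to $\|Df_\lambda|_{TL_\lambda}\|$, and the abstract Lemma~\ref{l.new04} then carries out, with explicit error tracking, exactly the computation you compress into your ``parameter distortion identity'' $\partial_\lambda\log|g'_{u,\lambda}|=-\tfrac{d}{d\lambda}S^u_n+O(1)$.

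Your packaging is cleaner, but be aware of one regularity point: the projected branches $g_{i,\lambda}$ on $\R$ are built from the stable holonomy, which for a $C^2$ surface diffeomorphism is only $C^{1+\alpha}$, not $C^2$; so the pointwise $g''$ you implicitly use when writing $\partial_\lambda g'_{u,\lambda}(s)=g'_{u,\lambda}(s)\,\partial_\lambda\log|g'_{u,\lambda}(s)|$ and invoking $F_x=g''/g'$ may not be available as stated. The paper sidesteps this by never projecting to $\R$ until the very end (the passage from $\mathrm{dist}(p,q)$ to $\mathrm{dist}(\bar p,\bar q)$), so that all compositions stay $C^2$. Your argument goes through with $C^{1+\alpha}$ branches if you replace Lipschitz estimates by H\"older ones in the geometric-series bounds, but this should be said explicitly.
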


We will need the following statement:

\begin{prop}\label{p.1}
Suppose that $J$ is a compact interval so that $\left| \frac{d}{d\lambda} Lyap^{u}(\mu_\lambda) \right| \ge \delta > 0$ for some $\delta > 0$ and all $\lambda \in J$. Then, for every $\varepsilon > 0$, there exist $N_0 \in \Z_+$ and a set $\Omega \subset \Sigma^\ell_A$ such that $\mu(\Omega) > 1 - \frac{\varepsilon}{2}$ and for any $\lambda \in J$, $x \in H_{\lambda}(\Omega)$, and $N \ge N_0$, we have
\begin{equation}\label{e.av}
\Big| \frac{d}{d\lambda} \Big( \frac{1}{N} \sum_{i=1}^{N} \log \|Df_{\lambda}(f_{\lambda}^i (x))\| \Big) \Big| > \frac{\delta}{2} > 0.
\end{equation}
\end{prop}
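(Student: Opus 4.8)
The plan is to push everything through the symbolic coding, differentiate a \emph{finite} Birkhoff average term by term, identify the limit of that derivative with $\frac{d}{d\lambda}Lyap^{u}(\mu_\lambda)$, and then make the convergence uniform in both the point and the parameter using Birkhoff's ergodic theorem, Egorov's theorem, and compactness of $J$. For $\omega\in\Sigma_A^\ell$ put $\psi_\lambda(\omega):=\log\|Df_\lambda(H_\lambda(\omega))\|$, where $\|\cdot\|$ is taken in a continuous Riemannian metric adapted to the hyperbolic splitting of $\Lambda_\lambda$ (orthogonal splitting, contracting stable direction), so that $\|Df_\lambda(y)\|=\|Df_\lambda|_{E^u_\lambda(y)}\|$ on $\Lambda_\lambda$ and hence $\int\psi_\lambda\,d\mu=Lyap^{u}(\mu_\lambda)$. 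If $x=H_\lambda(\omega)$, then by the conjugacy $f_\lambda^i(x)=H_\lambda(\sigma_A^i\omega)$, so the function inside $\frac{d}{d\lambda}$ in \eqref{e.av} is $\frac1N\sum_{i=1}^N\psi_\lambda(\sigma_A^i\omega)$. The regularity input needed is that $\lambda\mapsto\psi_\lambda(\omega)$ is $C^1$ for each $\omega$, with $\dot\psi_\lambda(\omega):=\partial_\lambda\psi_\lambda(\omega)$ jointly continuous in $(\lambda,\omega)$ on the compact set $J\times\Sigma_A^\ell$ (hence bounded, and $\lambda\mapsto\dot\psi_\lambda$ uniformly continuous into $(C(\Sigma_A^\ell),\|\cdot\|_\infty)$). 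This holds because $\lambda\mapsto H_\lambda(\omega)$ is $C^1$ — orbits of a uniformly hyperbolic set depend on the parameter as smoothly as the family, which here is $C^2$ — and $y\mapsto\log\|Df_\lambda(y)\|$ is $C^1$ near $\bigcup_\lambda\Lambda_\lambda$, uniform hyperbolicity keeping $Df_\lambda$ bounded away from the conformal matrices at which the operator norm fails to be smooth.

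Differentiating the finite sum term by term gives $\frac{d}{d\lambda}\big(\frac1N\sum_{i=1}^N\log\|Df_\lambda(f_\lambda^i(x))\|\big)=\frac1N\sum_{i=1}^N\dot\psi_\lambda(\sigma_A^i\omega)$, and the uniform bound on $\dot\psi$ lets us differentiate under the integral sign to get $\frac{d}{d\lambda}Lyap^{u}(\mu_\lambda)=\int\dot\psi_\lambda\,d\mu$; thus $|\int\dot\psi_\lambda\,d\mu|\ge\delta$ for all $\lambda\in J$. Since $\mu$ is $\sigma_A$-ergodic, Birkhoff's theorem gives $\frac1N\sum_{i=1}^N\dot\psi_\lambda(\sigma_A^i\omega)\to\int\dot\psi_\lambda\,d\mu$ for $\mu$-a.e.\ $\omega$ (the shift by one index is harmless). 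So \eqref{e.av} amounts to making this convergence — to a limit of modulus $\ge\delta$ — fast and uniform enough that the prelimit stays $>\delta/2$, with a single exceptional set $\Omega^c$ of measure $<\varepsilon/2$ and a single $N_0$, valid for all $\lambda\in J$.

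To do this, fix $\varepsilon>0$; by uniform continuity of $\lambda\mapsto\dot\psi_\lambda$ choose $\rho>0$ with $\|\dot\psi_\lambda-\dot\psi_{\lambda'}\|_\infty<\delta/8$ whenever $|\lambda-\lambda'|<\rho$, and let $\lambda_1,\dots,\lambda_m$ be a finite $\rho$-net in $J$. For each $j$, Birkhoff together with Egorov's theorem yields $\Omega_j\subset\Sigma_A^\ell$ with $\mu(\Omega_j)>1-\varepsilon/(2m)$ and $N_j\in\Z_+$ such that $|\frac1N\sum_{i=1}^N\dot\psi_{\lambda_j}(\sigma_A^i\omega)-\int\dot\psi_{\lambda_j}\,d\mu|<\delta/8$ for all $\omega\in\Omega_j$ and $N\ge N_j$. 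Put $\Omega=\bigcap_{j=1}^m\Omega_j$ (so $\mu(\Omega)>1-\varepsilon/2$) and $N_0=\max_j N_j$. For arbitrary $\lambda\in J$ choose $\lambda_j$ with $|\lambda-\lambda_j|<\rho$; then for $\omega\in\Omega$ and $N\ge N_0$,
$$\Big|\frac1N\sum_{i=1}^N\dot\psi_\lambda(\sigma_A^i\omega)\Big|\ge\Big|\frac1N\sum_{i=1}^N\dot\psi_{\lambda_j}(\sigma_A^i\omega)\Big|-\|\dot\psi_\lambda-\dot\psi_{\lambda_j}\|_\infty\ge\Big|\int\dot\psi_{\lambda_j}\,d\mu\Big|-\frac\delta8-\frac\delta8\ge\delta-\frac\delta4>\frac\delta2,$$
which is \eqref{e.av}.

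The only non-soft ingredient here is the regularity claim in the first paragraph — $C^1$-dependence of $H_\lambda(\omega)$ on $\lambda$ with jointly continuous derivative — which is where the standing $C^2$-assumption on the family is actually used; everything afterwards (differentiating a finite sum, differentiating a bounded integrand, Birkhoff, Egorov, the net argument) is routine. A related point not to be glossed over is the adapted-metric choice: for a generic metric one only controls $\frac{d}{d\lambda}\int\log\|Df_\lambda\|\,d\mu_\lambda$, which in general differs from $\frac{d}{d\lambda}Lyap^{u}(\mu_\lambda)$, so that the hypothesis $|\frac{d}{d\lambda}Lyap^{u}(\mu_\lambda)|\ge\delta$ would not transfer; arranging the hyperbolic splitting to be orthogonal with contracting stable part removes this discrepancy.
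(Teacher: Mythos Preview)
Your proof is correct and follows essentially the same route as the paper's own argument: differentiate the finite Birkhoff sum term by term, identify $\frac{d}{d\lambda}Lyap^u(\mu_\lambda)$ with $\int\dot\psi_\lambda\,d\mu$ via differentiation under the integral, apply Birkhoff plus Egorov at each point of a finite net in $J$, intersect the resulting good sets, and use equicontinuity of $\lambda\mapsto\dot\psi_\lambda$ to transfer the estimate from the net to all of $J$. The one place you are more careful than the paper is in flagging the adapted-metric issue (so that $\int\log\|Df_\lambda\|\,d\mu_\lambda$ genuinely equals $Lyap^u(\mu_\lambda)$ rather than merely bounding it above); the paper writes this identity without comment.
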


\begin{proof}%[Proof of Proposition \ref{p.1}]
We assume, without loss of generality, that $\frac{d}{d\lambda} Lyap^{u}(\mu_\lambda) \ge \delta > 0$ for some $\delta > 0$ and all $\lambda \in J$. (The other case may be handled similarly.)

Let us first show that for a fixed $\lambda' \in J$ and given $\varepsilon' > 0$, one can find a subset $\Omega' \subset \Sigma^\ell_A$ and $N' \in \Z_+$ such that $\mu(\Omega') > 1 - \varepsilon'$ and for any $x \in  H_{\lambda'}(\Omega')$ and $N \ge N'$, we have
$$
\frac{d}{d\lambda} \Big( \frac{1}{N} \sum_{i=1}^{N} \log \|Df_{\lambda}(f_{\lambda}^i (x))\| \Big) \Big|_{\lambda = \lambda'} > \frac{3}{4} \delta > 0.
$$
Indeed, due to the Bounded Convergence Theorem and the Birkhoff Ergodic Theorem, we have
\begin{align*}
0 & < \delta \\
& < \frac{d}{d\lambda} Lyap^u(\mu_\lambda) \Big|_{\lambda = \lambda'} \\
& = \frac{d}{d\lambda} \Big( \int \log \|Df_{\lambda}(H_{\lambda}(\omega)  )\| \, d\mu (\omega) \Big) \Big|_{\lambda=\lambda'} \\
& = \int \frac{d}{d\lambda} \Big( \log \|Df_{\lambda}(H_{\lambda}(\omega))\| \Big) \Big|_{\lambda = \lambda'} \, d\mu (\omega) \\
& = \lim_{N \to \infty} \frac{d}{d\lambda} \Big( \frac{1}{N} \sum_{i=1}^N \log \|Df_{\lambda}(f_{\lambda}^i (H_{\lambda}(\omega)))\| \Big) \Big|_{\lambda = \lambda'} \quad \text{for a.e. } \omega.
\end{align*}
By Egorov's Theorem, there exists $\Omega' \subset \Sigma^\ell_A$ with $\mu(\Omega') > 1 - \varepsilon'$ and such that the convergence is uniform in $\omega \in \Omega'$. Hence, there exists $N' \in \Z_+$ such that for any $N \ge N'$ and $\omega \in \Omega'$, we have
$$
\frac{d}{d\lambda} \Big( \frac{1}{N} \sum_{i=1}^N \log \|Df_{\lambda}(f_{\lambda}^i (H_{\lambda}(\omega)))\| \Big) \Big|_{\lambda = \lambda'} > \frac{3}{4} \delta > 0.
$$
Now let us show that one can actually treat all $\lambda \in J$ at the same time. Consider the family of functions
$$
\xi_\omega(\lambda) = \frac{d}{d\lambda} \log \|Df_{\lambda}(H_{\lambda}(\omega))\|, \ \omega \in \Sigma^\ell_A, \ \lambda \in J.
$$
Let us treat these functions as functions of $\lambda$ with parameter $\omega \in \Sigma^\ell_A$. Then $\{\xi_\omega(\lambda)\}_{\omega \in \Sigma^\ell_A}$ is an equicontinuous family of functions, and there exists $t > 0$ such that if $|\lambda_1 - \lambda_2| \le t$, then $|\xi_\omega(\lambda_1) - \xi_\omega(\lambda_2)| < \frac{\delta}{100}$ for any $\omega \in \Sigma^\ell_A$. Consider a finite $t$-net $\{y_1, \ldots, y_M\}$ in $J$, containing $M = M(J, t)$ points. For each point $y_j$, we can find a set $\Omega_j \subset \Sigma^\ell_A$, $\mu(\Omega_j) > 1 - \frac{\varepsilon}{2M}$, and $n_j \in \Z_+$ such that for every $N \ge n_j$ and every $\omega \in \Omega_j$, we have
$$
\frac{1}{N} \sum_{i=1}^N \xi_{\sigma^{i}(\omega)}(y_j) = \frac{d}{d\lambda} \Big( \frac{1}{N} \sum_{i=1}^N \log \|Df_{\lambda}(f_{\lambda}^i (H_{\lambda}(\omega)))\| \Big) \Big|_{\lambda = y_j} > \frac{3}{4} \delta > 0.
$$
Take $\Omega = \bigcap_{s=1}^M \Omega_s$. We have
$$
\mu(\Omega) > 1 - M \frac{\varepsilon}{2M} = 1 - \frac{\varepsilon}{2},
$$
and for every $\lambda_0 \in J$, there exists $y_j$ with $|y_j - \lambda_0| \le t$. So for every $\omega \in \Omega \subset \Omega_j$ and every $N > N_0 = \max \{ n_1, \ldots, n_M \}$, we have
\begin{align*}
\frac{d}{d\lambda} \Big( \frac{1}{N} & \sum_{i=1}^N \log \|Df_{\lambda}(f_{\lambda}^i (H_{\lambda}(\omega)))\| \Big) \Big|_{\lambda = \lambda_0} = \frac{1}{N} \sum_{i=1}^N \xi_{\sigma^{i}(\omega)}(\lambda_0) \\
& \ge \frac{1}{N} \sum_{i=1}^N \xi_{\sigma^{i}(\omega)}(y_j) - \left| \frac{1}{N} \sum_{i=1}^N \xi_{\sigma^{i}(\omega)}(y_j) - \frac{1}{N} \sum_{i=1}^N \xi_{\sigma^{i}(\omega)}(\lambda_0) \right| \\
& \ge \frac{3}{4} \delta - \frac{\delta}{100} \\
& > \frac{\delta}{2} \\
& > 0,
\end{align*}
concluding the proof.
\end{proof}

Notice that the images of all the projections $\gamma^{-1}_\lambda \circ \pi_\lambda (\Lambda_\lambda)$ are homeomorphic to the same Cantor set $C$, and the family of homeomorphisms $h_\lambda : \gamma^{-1}_\lambda \circ \pi_\lambda(\Lambda_\lambda) \to C$ can be chosen continuous with respect to $\lambda$. Then the measure $h_\lambda(\nu_\lambda)$ supported on $C$ does not actually depend on $\lambda$. Indeed, the composition $h_\lambda\circ\gamma^{-1}_\lambda\circ \pi_\lambda\circ H_\lambda:\Sigma_A^\ell\to C$ is a map from a totally disconnected set to a totally disconnected set, and depends on $\lambda$ in a continuous way, so must be in fact independent of the parameter. Let us denote the measure $h_\lambda(\nu_\lambda)$ by $\nu$. Compare Figure~1.

\begin{prop}\label{p.linelyap}
Suppose that $J$ is a compact interval so that $\left| \frac{d}{d\lambda} Lyap^{u}(\mu_\lambda) \right| \ge \delta > 0$ for some $\delta > 0$ and all $\lambda \in J$. Then, for every $\varepsilon > 0$, there exist $N^* \in \Z_+$ and a set $C^* \subset C$ such that $\nu(C^*) > 1 - \frac{\varepsilon}{2}$, and such that for $\lambda \in J$, $x \in \gamma_\lambda \circ h^{-1}_{\lambda}(C^*)$, and $N \ge N^*$, we have
\begin{equation}\label{e.lyapnew}
\lim_{n\to +\infty}\frac{1}{n}\log \|Df^n_{\lambda}(x)|_{L_{\lambda}}\|=Lyap^u(\mu_{\lambda}),
\end{equation}
and
\begin{equation}\label{e.av1}
\Big| \frac{d}{d\lambda} \Big( \frac{1}{N} \log \|Df^N_{\lambda}(x)|_{L_\lambda}\| \Big) \Big| > \frac{\delta}{4} > 0.
\end{equation}
\end{prop}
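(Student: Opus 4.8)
The plan is to deduce the proposition from Proposition~\ref{p.1} by transporting that estimate from the hyperbolic set $\Lambda_\lambda$ to the curve $L_\lambda$ along stable manifolds, and controlling the resulting error uniformly in $\lambda$ together with one $\lambda$-derivative. Recall from the discussion preceding the proposition that the composition $\Phi := h_\lambda\circ\gamma_\lambda^{-1}\circ\pi_\lambda\circ H_\lambda : \Sigma_A^\ell \to C$ is independent of $\lambda$ and satisfies $\Phi_*\mu=\nu$. Given $\varepsilon>0$, I would take $\Omega\subset\Sigma_A^\ell$ and $N_0\in\Z_+$ from Proposition~\ref{p.1} and, at no cost in measure, shrink $\Omega$ so that every $\omega\in\Omega$ is generic \emph{simultaneously for all $\lambda\in J$} for the potential $\omega\mapsto\log\|Df_\lambda(H_\lambda(\omega))|_{E^u}\|$; this is legitimate because for each fixed $\lambda$ genericity holds $\mu$-a.e.\ by Birkhoff, and since $\lambda\mapsto\log\|Df_\lambda(H_\lambda(\cdot))|_{E^u}\|$ is continuous from $J$ into $C(\Sigma_A^\ell)$, being generic along a countable dense set of parameters forces genericity for all $\lambda\in J$. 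Then I would set $C^*:=\Phi(\Omega)$, so that $\nu(C^*)\ge\mu(\Omega)>1-\tfrac{\varepsilon}{2}$, and fix $N^*\ge N_0$ to be enlarged below; note that, by definition of $\Phi$, for $c\in C^*$ and $\omega\in\Omega$ with $\Phi(\omega)=c$ one has $\gamma_\lambda(h_\lambda^{-1}(c))=\pi_\lambda(H_\lambda(\omega))$, so every $x\in\gamma_\lambda\circ h_\lambda^{-1}(C^*)$ is the stable-projection of a point $y=H_\lambda(\omega)\in\Lambda_\lambda$ with $\omega\in\Omega$.

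The core of the argument is the transfer step. Fix such $c,\omega$ and write $y_\lambda:=H_\lambda(\omega)$, $x_\lambda:=\pi_\lambda(y_\lambda)\in W^s_{\mathrm{loc}}(y_\lambda)$; by the standing transversality assumption $T_{x_\lambda}L_\lambda$ is transversal to the stable direction. Since $\Lambda_\lambda$ is locally maximal and varies continuously, it is uniformly hyperbolic over the compact interval $J$; and for a $C^2$ family the hyperbolic continuation $y_\lambda$, the local stable manifolds, and hence the intersection point $x_\lambda=W^s_{\mathrm{loc}}(y_\lambda)\cap L_\lambda$ depend at least $C^1$ on $\lambda$. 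I would then apply the inclination lemma, uniformly over the compact set $\{(c,\lambda):c\in C^*,\lambda\in J\}$, to obtain exponential convergence of $f_\lambda^i(x_\lambda)$ to $f_\lambda^i(y_\lambda)$ and of the direction $Df_\lambda^i(T_{x_\lambda}L_\lambda)$ to $E^u_{f_\lambda^i(y_\lambda)}$, with the same exponential bounds surviving one $\lambda$-differentiation. Expanding $\log\|Df_\lambda^N(x_\lambda)|_{L_\lambda}\|$ and $\log\|Df_\lambda^N(y_\lambda)|_{E^u}\|$ as telescoping sums along the two orbits and comparing term by term, one gets that
$$
g_N(\lambda):=\tfrac1N\log\|Df_\lambda^N(x_\lambda)|_{L_\lambda}\|-\tfrac1N\sum_{i=1}^N\log\|Df_\lambda(f_\lambda^i(y_\lambda))\|
$$
satisfies $\sup_{c\in C^*,\,\lambda\in J}\bigl(|g_N(\lambda)|+|g_N'(\lambda)|\bigr)\to0$ as $N\to\infty$. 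Here I also use that on a hyperbolic set of a surface diffeomorphism $\|Df_\lambda\|$ and the unstable expansion factor differ only by a uniformly bounded factor, so $\sum_{i=1}^N\log\|Df_\lambda(f_\lambda^i(y_\lambda))\|$ and $\log\|Df_\lambda^N(y_\lambda)|_{E^u}\|$ agree up to an $O(1)$ term --- the same identification already implicit in Proposition~\ref{p.1}.

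With this in hand the conclusion is immediate. For \eqref{e.lyapnew}: since $\omega$ is $\mu$-generic, $\tfrac1n\log\|Df_\lambda^n(y_\lambda)|_{E^u}\|\to Lyap^u(\mu_\lambda)$ by Birkhoff, and because $g_n(\lambda)\to0$ the same limit holds for $\tfrac1n\log\|Df_\lambda^n(x_\lambda)|_{L_\lambda}\|$; as $\omega$ was arranged to be generic for every $\lambda\in J$, this yields \eqref{e.lyapnew} for all $\lambda\in J$ and all $x\in\gamma_\lambda\circ h_\lambda^{-1}(C^*)$. For \eqref{e.av1}: Proposition~\ref{p.1} gives $\bigl|\tfrac{d}{d\lambda}\bigl(\tfrac1N\sum_{i=1}^N\log\|Df_\lambda(f_\lambda^i(y_\lambda))\|\bigr)\bigr|>\tfrac{\delta}{2}$ for all $N\ge N_0$, so enlarging $N^*$ so that $\sup|g_N'|<\tfrac{\delta}{4}$ for $N\ge N^*$, the triangle inequality gives $\bigl|\tfrac{d}{d\lambda}\bigl(\tfrac1N\log\|Df_\lambda^N(x_\lambda)|_{L_\lambda}\|\bigr)\bigr|>\tfrac{\delta}{2}-\tfrac{\delta}{4}=\tfrac{\delta}{4}$ for all $N\ge N^*$, $\lambda\in J$, and $x\in\gamma_\lambda\circ h_\lambda^{-1}(C^*)$.

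The hard part will be the uniform, $C^1$-in-$\lambda$ version of the inclination lemma used in the transfer step: one must control the exponential decay of both $d(f_\lambda^i(x_\lambda),f_\lambda^i(y_\lambda))$ and the angle between $Df_\lambda^i(T_{x_\lambda}L_\lambda)$ and $E^u$ \emph{together with their $\lambda$-derivatives}, with rates and constants independent of $c\in C^*$ and $\lambda\in J$. This rests on the uniform hyperbolicity of $\Lambda_\lambda$ over $J$, on the $C^1$ dependence on $\lambda$ of the invariant objects for the $C^2$ family $f_\lambda$, and on the transversality hypothesis, which keeps $T_{x_\lambda}L_\lambda$ bounded away from the stable direction and makes all the estimates uniform over the compact set $C^*\times J$. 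A minor additional point is the measurability of $C^*=\Phi(\Omega)$: a continuous image of a Borel set is $\nu$-measurable, or one replaces it by a Borel inner approximation of nearly full measure.
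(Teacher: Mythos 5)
Your proof takes essentially the same route as the paper: transfer the estimate of Proposition~\ref{p.1} from $\Lambda_\lambda$ to $L_\lambda$ along stable manifolds, using the exponential convergence of directions (inclination lemma / shadowing along $W^s$, uniform over the compact parameter interval) to compare $\log\|Df_\lambda(f^n_\lambda(x))|_{TL_\lambda}\|$ term by term with $\log\|Df_\lambda(f^n_\lambda(y))|_{E^u}\|$, and then close with a Ces\`aro-average stability estimate (the paper packages this as a small elementary lemma; you phrase it as $\sup(|g_N|+|g_N'|)\to 0$, which is equivalent). Two small remarks: you explicitly enlarge $\Omega$ to consist of points that are Birkhoff-generic for the cocycle itself (via equicontinuity of the partial averages in $\lambda$ and a dense countable parameter set), which is needed for \eqref{e.lyapnew} and which the paper leaves implicit --- a worthwhile addition; and your flagging of the measurability of $C^*$ and the $C^1$-in-$\lambda$ regularity of the hyperbolic continuation correctly identifies where the real technical work lies, matching what the paper uses without spelling out.
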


\begin{proof}%[Proof of Proposition \ref{p.linelyap}]
We again consider the case $\frac{d}{d\lambda} Lyap^{u}(\mu_\lambda) \ge \delta > 0$ for some $\delta > 0$ and all $\lambda \in J$, with the other case being completely analogous.

Notice that in the statement of Proposition~\ref{p.1}, we can assume without loss of generality that the set $\Omega = \Omega(\varepsilon)$ is compact. Since that set $\Omega$ is independent of $\lambda \in J$, the set $h_\lambda (\gamma^{-1}_\lambda (\pi_\lambda (H_\lambda(\Omega)))) \subset C$ is also independent of $\lambda$, and if we set $C^* = h_\lambda (\gamma^{-1}_\lambda (\pi_\lambda (H^{-1}_\lambda(\Omega))))$, then $\nu(C^*) \ge \mu_\lambda (H_\lambda(\Omega)) > 1 - \frac{\varepsilon}{2}$.

Take any $x \in \gamma_\lambda \circ  h_{\lambda}^{-1}(C^*) = \pi_\lambda(H_\lambda(\Omega))$. Then there exists $y \in \Lambda_\lambda$ such that $y \in H_\lambda(\Omega)$ and $x \in W^s(y)$. Moreover, the length of the arc of the stable manifold between $x$ and $y = y(x)$ will be uniformly bounded with respect to both $x \in C^*$ and $\lambda \in J$. Since $y \in H_\lambda(\Omega)$, we have
$$
\frac{d}{d\lambda} \Big( \frac{1}{N} \log \|Df^N_\lambda(y)|_{E_y^u}\| \Big) > \frac{\delta}{2} > 0
$$
for any $N > N_0$, where $E^u_y$ is an unstable subspace in the hyperbolic splitting $T_yM=E^u_y\oplus E^s_y$.

Since the angle between directions $E^u_{f^n_{\lambda}(y)}$ and $T_{f^n_\lambda(L_\lambda)}$ tends to zero as $n\to +\infty$, we have that
\begin{equation}\label{e.difflog}
\Big| \log \big\| Df_{\lambda}(f^N_\lambda (y))|_{E_{f^N_\lambda(y)}^u} \big\| - \log \big\| Df_{\lambda}(f^N_\lambda (x))|_{T_{f^N_\lambda (L_\lambda)}} \big\|  \Big| \to 0 \ \ \text{as}\ \ \ n\to \infty,
\end{equation}
and (\ref{e.lyapnew}) follows.

Now take $N_1 \in \Z_+$ such that for every $N > N_1$, we have
$$
\Big| \frac{d}{d\lambda} \log \big\| Df_{\lambda}(f^N_\lambda (y))|_{E_{f^N_\lambda(y)}^u} \big\| - \frac{d}{d\lambda} \log \big\| Df_{\lambda}(f^N_\lambda (x))|_{T_{f^N_\lambda (L_\lambda)}} \big\| \Big| < \frac{\delta}{100},
$$
where $T_{f^N_\lambda (L_\lambda)}$ is a tangent line to the curve $f^N_\lambda (L_\lambda)$ at the point $f^N_\lambda (x)$.
Once again, $N_1$ can be chosen uniformly with respect to both $x \in C^*$ and $\lambda \in J$.

Inequality (\ref{e.av1}) in Proposition~\ref{p.linelyap} is implied now by the following elementary lemma.

\begin{lemma}
Suppose that $\{a_n\}$, $\{b_n\}$ are bounded sequences and $N_0, N_1 \in \Z_+$ are such that $\frac{1}{N} \sum_{n=0}^{N-1} a_n > \frac{\delta}{2} > 0$ for each $N > N_0$, and $|b_n - a_n| < \frac{\delta}{100}$ for each $n > N_1$. Then there exists $N^* \in \Z_+$ such that $\frac{1}{N} \sum_{n=0}^{N-1} b_n > \frac{\delta}{4} > 0$ for every $N > N^*$.
\end{lemma}

Indeed, just apply the previous lemma with $a_n = \frac{d}{d\lambda} \log \| Df_{\lambda}(f^n_\lambda (y))|_{E_{f^n_\lambda(y)}^u} \|$ and $b_n = \frac{d}{d\lambda} \log \| Df_{\lambda} (f^n_\lambda (x))|_{T_{f^n_\lambda (L_\lambda)}} \|$.
\end{proof}

\begin{lemma}\label{l.new04}
Suppose we are given a smooth family of sequences of $C^2$-diffeomorphisms $k_{\lambda}^{(t)} : \R \to \R$, $\lambda \in J$, $t \in \Z_+$, $k_{\lambda}^{(t)}(0) = 0$, $\frac{\partial k^{(t)}_{\lambda}}{\partial x}(0) = l^{(t)}(\lambda)$. Suppose also that a sequence of smooth positive functions $a^{(t)} : J \to \R$, bounded away from zero, is given with uniformly bounded $C^2$ norms. Assume that the following properties hold:

1. For all large enough $N \in \Z_+$, we have
\begin{equation}\label{e.cond1}
\frac{1}{N} \sum_{t=1}^{N} \frac{1}{l^{(t)}} \frac{dl^{(t)}}{d\lambda} = \frac{d}{d\lambda} \Big( \frac{1}{N} \sum_{t=1}^N \log l^{(t)} \Big) < -\delta < 0.
\end{equation}

2. For any neighborhood $V(0)$ of zero, there exists $n_0 \in \Z_+$ such that for every $\lambda \in J$ and every large enough $n \in \Z_+$, we have
\begin{equation}\label{e.cond2}
k^{(n-n_0)}_{\lambda} \circ k^{(n-n_0+1)}_{\lambda} \circ \ldots \circ k^{(n)}_{\lambda} (a^{(n)}(\lambda)) \in V(0).
\end{equation}

3. There exists $t_0 \in \Z_+$ such that
\begin{equation}\label{e.cond3}
0 < \inf_{t \ge t_0, \lambda} \, l^{(t)}(\lambda) \le \sup_{t \ge t_0, \lambda} \, l^{(t)}(\lambda) < 1.
\end{equation}

Then for all large enough $n \in \Z_+$, the function
$$
\lambda \mapsto k^{(1)}_{\lambda} \circ k^{(2)}_{\lambda} \circ \cdots \circ k^{(n-1)}_{\lambda} \circ k^{(n)}_{\lambda} (a^{(n)}(\lambda))
$$
is monotone on $J$, with the derivative $\frac{d}{d\lambda} \left(k^{(1)}_{\lambda} \circ k^{(2)}_{\lambda} \circ \cdots \circ k^{(n)}_{\lambda}(a^{(n)}(\lambda)) \right) < 0$ bounded away from zero. More precisely, there are constants $C > 0, \delta' > 0$ {\rm (}that do not depend on $n${\rm )} such that
$$
\frac{d}{d\lambda} \left(k^{(1)}_{\lambda} \circ k^{(2)}_{\lambda} \circ \cdots \circ k^{(n)}_{\lambda} (a^{(n)}(\lambda)) \right) < \Big( \prod_{s=1}^{n} l^{(s)} \Big) (C-\delta'n).
$$
\end{lemma}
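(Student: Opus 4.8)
The plan is to differentiate the composition \emph{logarithmically}. Write $g_m(\lambda):=k^{(m)}_\lambda\circ k^{(m+1)}_\lambda\circ\cdots\circ k^{(n)}_\lambda(a^{(n)}(\lambda))$, so that $g_{n+1}=a^{(n)}$, $g_m=k^{(m)}_\lambda(g_{m+1})$, and $g_1$ is the function in the conclusion; since the $k^{(t)}_\lambda$ fix $0$ and (we may assume, as holds in the applications, and is forced by \eqref{e.cond3} for $t\ge t_0$) are orientation preserving while $a^{(n)}(\lambda)>0$, all $g_m(\lambda)>0$. Put $\Pi_k(\lambda):=\prod_{s=1}^k l^{(s)}(\lambda)$, $\Pi_0:=1$, and Taylor-expand $k^{(t)}_\lambda(x)=l^{(t)}(\lambda)x+r^{(t)}_\lambda(x)$; because $k^{(t)}_\lambda(0)\equiv 0$ and $\partial_x k^{(t)}_\lambda(0)\equiv l^{(t)}(\lambda)$, the uniform $C^2$ bounds give, for $|x|$ bounded and uniformly in $t,\lambda$, that $|r^{(t)}_\lambda(x)|+|\partial_\lambda r^{(t)}_\lambda(x)|\le K x^2$ and $|\partial_x r^{(t)}_\lambda(x)|\le K|x|$. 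I also record that by \eqref{e.cond3} we have $l^{(t)}\in[\rho_-,\rho]\subset(0,1)$ for $t\ge t_0$ (with $\rho:=\sup_{t\ge t_0,\lambda}l^{(t)}$, $\rho_-:=\inf_{t\ge t_0,\lambda}l^{(t)}$), while for the finitely many $t<t_0$ the functions $l^{(t)}$ and $dl^{(t)}/d\lambda$ are continuous on the compact $J$ and $l^{(t)}$ is nonvanishing; hence $0<\ell_-\le l^{(t)}(\lambda)$ for all $t$ and $|l^{(t)}|,|dl^{(t)}/d\lambda|$ are bounded.

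The first substantive step uses \eqref{e.cond2} and \eqref{e.cond3} to show the orbit becomes small while the nonlinear distortions stay controlled. Fix $V(0)=(-\varepsilon_0,\varepsilon_0)$ with $\varepsilon_0$ small (chosen at the end); then $\theta:=\rho+K\varepsilon_0<1$, and for $t\ge t_0$ the map $k^{(t)}_\lambda$ sends $V(0)$ into itself and contracts by $\theta$. By \eqref{e.cond2} there is a fixed $n_0$ with $g_{n-n_0}(\lambda)\in V(0)$ for all $\lambda\in J$ and all large $n$; iterating the contraction gives $|g_m|\le\varepsilon_0\theta^{n-n_0-m}$ for $t_0\le m\le n-n_0$, and applying the remaining bounded-derivative maps, $|g_m|\le\mathrm{const}\cdot\varepsilon_0\theta^{n-n_0-t_0}$ for $m<t_0$; in particular $\sum_m|g_{m+1}|$ is bounded by an $n$-independent constant. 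Conversely $g_{n-n_0}$ is bounded below away from $0$ uniformly in $n$: on a small fixed interval around $0$ each $k^{(t)}_\lambda$ ($t\ge t_0$) has derivative $\ge\rho_-/2$, so $k^{(n-n_0)}_\lambda\circ\cdots\circ k^{(n)}_\lambda$ is bi-Lipschitz with $n$-independent constants, whence $g_{n-n_0}\ge c_*>0$ (likewise $g_m\ge c_*'>0$ for $n-n_0\le m\le n+1$). Writing $g_m=l^{(m)}g_{m+1}(1+\xi_m)$ with $\xi_m:=r^{(m)}_\lambda(g_{m+1})/(l^{(m)}g_{m+1})$ (so $|\xi_m|\le K\ell_-^{-1}|g_{m+1}|$ and $1+\xi_m=g_m/(l^{(m)}g_{m+1})>0$), the products $Q_m:=\prod_{j=m}^n(1+\xi_j)=g_m\Pi_{m-1}/(\Pi_n a^{(n)})$ then lie in a fixed interval $[c_Q,C_Q]\subset(0,\infty)$, and for $t_0\le m\le n-n_0$ one has the sharper $Q_m=Q^{\mathrm{tail}}\,(1+O(\varepsilon_0))$ with $Q^{\mathrm{tail}}:=\prod_{j=n-n_0}^n(1+\xi_j)$ and implied constant independent of $m,n$ (the ``head'' factors $\prod_{j=m}^{n-n_0-1}(1+\xi_j)=\exp(\sum O(|g_{j+1}|))=1+O(\varepsilon_0)$).

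Now differentiate $g_m=k^{(m)}_\lambda(g_{m+1})$ and divide by $g_m$:
\begin{equation*}
\frac{g_m'}{g_m}=\beta_m+\kappa_m\,\frac{g_{m+1}'}{g_{m+1}},\qquad
\beta_m:=\frac{\partial_\lambda k^{(m)}_\lambda(g_{m+1})}{g_m},\quad
\kappa_m:=\frac{(\partial_x k^{(m)}_\lambda)(g_{m+1})\,g_{m+1}}{g_m},
\end{equation*}
where the expansions above give $\kappa_m>0$ with $\kappa_m=1+O(|g_{m+1}|)$ for $m\le n-n_0$, and $\beta_m=\frac1{l^{(m)}}\frac{dl^{(m)}}{d\lambda}\bigl(1+O(|g_{m+1}|)\bigr)+O(|g_{m+1}|)$ there (the last remainder from $|\partial_\lambda r^{(m)}_\lambda(x)|\le K x^2$), while for the finitely many indices $m>n-n_0$ and $m<t_0$ the quantities $\kappa_m,\beta_m$ are merely uniformly bounded. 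Iterating from $m=1$, and using $g_{n+1}'/g_{n+1}=(a^{(n)})'/a^{(n)}$, yields
\begin{equation*}
\frac{g_1'}{g_1}=\sum_{m=1}^{n}\Bigl(\prod_{s=1}^{m-1}\kappa_s\Bigr)\beta_m+\Bigl(\prod_{s=1}^{n}\kappa_s\Bigr)\frac{(a^{(n)})'}{a^{(n)}}.
\end{equation*}
Here $\prod_{s=1}^n\kappa_s$ telescopes to $\tfrac{a^{(n)}}{g_{n-n_0}}\bigl(\prod_{s=n-n_0}^n(\partial_x k^{(s)}_\lambda)(g_{s+1})\bigr)\prod_{s=1}^{n-n_0-1}\kappa_s$, which is $n$-independently bounded, so the last term is $O(1)$; each $\prod_{s=1}^{m-1}\kappa_s$ is bounded, and $=1+O(\varepsilon_0)$ for $t_0\le m\le n-n_0$. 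Since $\sum_m|g_{m+1}|$ is bounded, the $O(|g_{m+1}|)$ parts of the $\beta_m$ contribute $O(1)$, the $\le t_0+n_0$ non-bulk terms contribute $O(1)$, and what remains is $\sum_{t_0\le m\le n-n_0}\frac1{l^{(m)}}\frac{dl^{(m)}}{d\lambda}(1+O(\varepsilon_0))=\sum_{m=1}^n\frac1{l^{(m)}}\frac{dl^{(m)}}{d\lambda}+O(1)+O(\varepsilon_0)n$, which by \eqref{e.cond1} is $<-\delta n+O(1)+O(\varepsilon_0)n$. All constants in the $O(\varepsilon_0)$ and $O(1)$ depend only on $\ell_-$ and the $C^1$-bound of the $l^{(t)}$ (not on $\varepsilon_0$), so fixing $\varepsilon_0$ small enough gives $g_1'/g_1<-\tfrac{\delta}{2}n+C_0$ for a fixed $C_0$ and all large $n$.

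Finally, since $g_1>0$ and $g_1/\Pi_n=a^{(n)}Q_1\in[a_-c_Q,\,a_+C_Q]$, multiplying through gives, for all large $n$,
\begin{equation*}
\frac{d}{d\lambda}\Bigl(k^{(1)}_\lambda\circ\cdots\circ k^{(n)}_\lambda(a^{(n)}(\lambda))\Bigr)=\frac{g_1'}{g_1}\,g_1\le\Bigl(-\tfrac{\delta}{2}n+C_0\Bigr)a_-c_Q\,\Pi_n(\lambda)=\Bigl(\prod_{s=1}^n l^{(s)}(\lambda)\Bigr)(C-\delta'n),
\end{equation*}
with $C:=C_0a_-c_Q>0$ and $\delta':=\tfrac{\delta}{2}a_-c_Q>0$ independent of $n$; in particular this is $<0$ throughout $J$ for $n$ large, so the function is monotone on $J$. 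I expect the main obstacle to be precisely the second step --- obtaining the uniform (in $m$ and $n$) two-sided control of the distortions $\kappa_m$, $Q_m$ of the long composition. That is exactly where both hypotheses enter: \eqref{e.cond2} drives the orbit into an arbitrarily small neighborhood of $0$ after a \emph{bounded} number of steps, and \eqref{e.cond3} supplies the contraction rate $<1$ (exponential decay thereafter) and the rate $>0$ (control of the quotients $1/(1+\xi_m)$ and $1/l^{(m)}$); the remaining care is in the bookkeeping needed to play the bounded-but-not-near-$1$ tail factors $Q^{\mathrm{tail}}$, $\prod_s\kappa_s$ off against the linear gain $-\delta n$ supplied by \eqref{e.cond1}.
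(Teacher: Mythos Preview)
Your proof is correct and takes essentially the same approach as the paper's---both establish the two-sided comparability $g_m \asymp \prod_{s=m}^{n} l^{(s)}$ (the paper's internal sub-lemma), use conditions~(2)--(3) to make the nonlinear distortions summable after a bounded number of steps, and then expand the $\lambda$-derivative of the composition via the chain rule to isolate the dominant contribution $\sum_m (l^{(m)})^{-1}\,dl^{(m)}/d\lambda$; your logarithmic-derivative recursion for $g_m'/g_m$ is simply a tidy repackaging of the paper's explicit $S_1+S_2+S_3$ splitting. One small inaccuracy: your $O(1)$ constants \emph{do} depend on $\varepsilon_0$ through $n_0=n_0(\varepsilon_0)$, but this is harmless since only the coefficient of $n$ in the $O(\varepsilon_0)n$ term must be $\varepsilon_0$-independent---once $\varepsilon_0$ is fixed small enough to absorb that term into $-\tfrac{\delta}{2}n$, the resulting $C_0$ is a fixed constant.
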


\begin{proof}%[Proof of Lemma \ref{l.new04}]
From the definition of the multipliers $l^{(t)}$ we have
\begin{equation}\label{e.kdx1}
\frac{\partial k_{\lambda}^{(t)}}{\partial x}(x) = l^{(t)} + O(x).
\end{equation}
Also, we have
\begin{equation}\label{e.kdl1}
\frac{\partial k_{\lambda}^{(t)}}{\partial \lambda}(x) = \frac{dl^{(t)}}{d\lambda} x + O(x^2).
\end{equation}

Introduce the notation
$$
K_{m}^n(\lambda) = k_{\lambda}^{(m)} \circ \cdots \circ k_{\lambda}^{(n-2)} \circ k_{\lambda}^{(n-1)} \circ k_{\lambda}^{(n)}(a^{(n)}_\lambda).
$$
In particular, we set $K_{n}^n(\lambda) = k_{\lambda}^{(n)}(a^{(n)}_\lambda)$ and $K_{n+1}^n(\lambda) = a^{(n)}_\lambda$.

\begin{lemma}\label{l.est1}
There is a constant $C' > 0$ such that if $m, n \in \Z_+$ and $m\le n+1$ then
$$
\frac{1}{C'} \Big( \prod_{j=m}^n l^{(j)} \Big) \le K_{m}^n (\lambda) \le C' \Big( \prod_{j=m}^n l^{(j)} \Big).
$$
Moreover, for any $\varepsilon > 0$, there is $n_0 \in \Z_+$ such that for all $n > n_0$, there is $A_n > 0$ such that for all $I \in \Z_+$ with $I \le n - n_0 + 1 < n$, we have
$$
\Big| \frac{K_{I+1}^n{(\lambda)}}{\prod_{j=I+1}^nl^{(j)}}-A_n  \Big| < \varepsilon
$$
and
$$
\Big| \frac{\prod_{s=1}^{I-1} \frac{\partial k_{\lambda}^{(s)}}{\partial x}(K_{s+1}^n ({\lambda}))}{\prod_{s=1}^{I-1}l^{(s)}} - 1 \Big| < \varepsilon.
$$
The sequence $\{A_n\}$ is uniformly bounded from above and away from zero.
\end{lemma}

\begin{proof}%[Proof of Lemma \ref{l.est1}.]
We have
\begin{align*}
K^n_m (\lambda) & = k_{\lambda}^{(m)} (K^{n}_{m+1} (\lambda)) \\
& = l^{(m)} K^{n}_{m+1}(\lambda) + O \left( (K^{n}_{m+1} (\lambda))^2 \right) \\
& = \left( l^{(m)} + O(K^{n}_{m+1} (\lambda)) \right) K^{n}_{m+1} (\lambda) \\
& = \ldots \\
& = \Big[ \prod_{j=m}^n \Big( l^{(j)} + O(K^{n}_{j+1}(\lambda)) \Big) \Big] a_\lambda^{(n)} \\
& =\Big( \prod_{j=m}^n l^{(j)} \Big) \Big[ \prod_{j=m}^n \Big( 1 + \frac{O(K_{j+1}^{n} (\lambda))}{l^{(j)}} \Big) \Big] a(\lambda).
\end{align*}
Since the sequence $\{K_{j+1}^n(\lambda)\}_{j\le n}$ is bounded in absolute value by a geometric progression, the product $\prod_{j=m}^n \Big( 1 + \frac{O(K_{j+1}^{n} (\lambda))}{l^{(j)}} \Big) $ is uniformly bounded from above and from below. This implies the first estimate in Lemma \ref{l.est1}.

If $n_0 \in \Z_+$ is sufficiently large, and $I \le n - n_0 + 1 < n$ then we have
$$
\frac{K_{I+1}^n (\lambda)}{\prod_{j=I+1}^nl^{(j)}} = \prod_{j=I+1}^n \Big( 1 + \frac{O(K_{j+1}^n(\lambda))}{l^{(j)}} \Big) a_{\lambda}^{(n)},
$$
and if we set
$$
A_n=\prod_{j=n-n_0+2}^n\Big(1+\frac{O(K_{j+1}^n(\lambda))}{l^{(j)}}\Big)a_{\lambda}^{(n)},
$$
then
$$
\left| \frac{K_{I+1}^n (\lambda)}{\prod_{j=I+1}^nl^{(j)}} - A_n \right| = A_n \cdot \left| \prod_{j=I+1}^{n-n_0+1} \Big( 1 + \frac{O(K_{j+1}^n (\lambda))}{l^{(j)}} \Big) - 1 \right|
$$
Since $\{K_{j+1}^n(\lambda)\}_{j=I+1}^{n-n_0+1}$ is bounded in absolute value by a geometric progression, the choice of $n_0$ can guarantee that its terms are sufficiently small and hence
$$
\Big|\prod_{j=I+1}^{n-n_0+1}\Big(1+\frac{O(K_{j+1}^n(\lambda))}{l^{(j)}}\Big)-1\Big|<\varepsilon.
$$
This proves the second estimate in Lemma \ref{l.est1}.

Finally, $\frac{\partial k_{\lambda}^{(s)}}{\partial x}(K_{s+1}^n ({\lambda}))=l^{(s)}+O(K^n_{s+1}(\lambda))$, so
$$
\frac{\prod_{s=1}^{I-1} \frac{\partial k_{\lambda}^{(s)}}{\partial x}(K_{s+1}^n ({\lambda}))}{\prod_{s=1}^{I-1}l^{(s)}}= \prod_{s=1}^{I-1}\Big(1+\frac{O(K_{j+1}^n(\lambda))}{l^{(j)}}\Big),
$$
and since $\{K_{s+1}^n(\lambda)\}_{s=1}^{I-1}$ is bounded in absolute value by a geometric progression with sufficiently small (due to the choice of $n_0$) terms,
$$
\Big| \frac{\prod_{s=1}^{I-1} \frac{\partial k_{\lambda}^{(s)}}{\partial x}(K_{s+1}^n ({\lambda}))}{\prod_{s=1}^{I-1}l^{(s)}} - 1 \Big| < \varepsilon,
$$
concluding the proof.
\end{proof}

Now let us consider $\frac{d}{d\lambda} \Big(k^{(1)}_{\lambda} \circ k^{(2)}_{\lambda} \circ \cdots \circ k^{(n)}_{\lambda} (a^{(n)}(\lambda)) \Big) = \frac{d}{d\lambda} K_1^n (\lambda)$. We have
\begin{align*}
\frac{d}{d\lambda} K_1^n (\lambda) & = \sum_{i=1}^{n} \Big[ \prod_{s=1}^{i-1} \frac{\partial k_{\lambda}^{(s)}}{\partial x}(K_{s+1}^n({\lambda})) \Big] \frac{\partial k_{\lambda}^{(i)}}{d\lambda}(K_{i+1}^n({\lambda})) + \Big[ \prod_{s=1}^{n} \frac{\partial k_{\lambda}^{(s)}}{\partial x} (K_{s+1}^n({\lambda})) \Big] \frac{\partial a^{(n)}(\lambda)}{\partial \lambda} \\
& = \sum_{i=1}^{n} \Big[ \prod_{s=1}^{i-1} \Big( l^{(s)} + O(K_{s+1}^n({\lambda})) \Big) \Big] \Big( \frac{\partial l^{(i)}}{\partial \lambda} K_{i+1}^n ({\lambda}) + O \Big( (K_{i+1}^n({\lambda}))^2 \Big) \Big) \\
& \qquad + \Big[ \prod_{s=1}^{n}l^{(s)} \Big] \Big[ \prod_{s=1}^{n} \Big( 1 + O \Big( \prod_{j=s+1}^n l^{(j)} \Big) \Big) \Big] \frac{\partial a^{(n)}(\lambda)}{\partial \lambda}.
\end{align*}
Since for $I \le n - n_0$, we have
$$
\left| \frac{K_{I+1}^n{(\lambda)}}{\prod_{j=I+1}^nl^{(j)}} - A_n \right| < \varepsilon
$$
and
$$
\left| \prod_{s=1}^{I-1} \Big( 1 + \frac{O(K_{s+1}^n(\lambda))}{l^{(s)}}\Big) - 1 \right| < \varepsilon,
$$
we can estimate
\begin{align*}
\Big| \Big( & \prod_{s=1}^{n} l^{(s)} \Big) \frac{K^n_{I+1}({\lambda})}{\prod_{j=I+1}^nl^{(j)}} \frac{1}{l^{(I)}} \Big( \prod_{s=1}^{I-1} \Big( 1 + O \Big( \prod_{j=s+1}^n l^{(s)} \Big) \Big) \Big) \Big( \frac{\partial l^{(I)}}{\partial \lambda} + O(K_{I+1}^n ({\lambda})) \Big) \\
& \qquad - \Big( \prod_{s=1}^{n} l^{(s)} \Big) A_n \frac{1}{l^{(I)}} \Big( \frac{\partial l^{(I)}}{\partial \lambda} + O(K_{I+1}^n({\lambda})) \Big) \Big| \\
& \le \Big( \prod_{s=1}^{n} l^{(s)} \Big) \frac{1}{l^{(I)}} \Big| \frac{\partial l^{(I)}}{\partial \lambda} + O(K_{I+1}^n ({\lambda})) \Big| (A_n + 2) \varepsilon \\
& \le C'' \varepsilon \Big( \prod_{s=1}^{n} l^{(s)} \Big).
\end{align*}
Therefore we have
\begin{align*}
\frac{d}{d\lambda} K_1^n(\lambda) & = \sum_{i=1}^{n-n_0} \Big[ \prod_{s=1}^{i-1} \Big( l^{(s)} + O(K_{s+1}^n({\lambda})) \Big) \Big] \Big(\frac{\partial l^{(i)}}{\partial \lambda} K_{i+1}^n ({\lambda}) + O \Big( (K_{i+1}^n({\lambda}))^2 \Big) \Big) \\
& \qquad + \sum_{i = n - n_0 + 1}^{n} \Big[ \prod_{s=1}^{i-1} \Big( l^{(s)} + O(K_{s+1}^n({\lambda})) \Big) \Big] \Big(\frac{\partial l^{(i)}}{\partial \lambda} K_{i+1}^n ({\lambda}) + O \Big( (K_{i+1}^n({\lambda}))^2 \Big) \Big) \\
& \qquad + \Big[ \prod_{s=1}^{n} l^{(s)} \Big] \Big[ \prod_{s=1}^{n} \Big( 1 + O \Big( \prod_{j=s+1}^n l^{(j)} \Big) \Big) \Big] \frac{\partial a^{(n)}(\lambda)}{\partial \lambda} \\
& = S_1 + S_2 + S_3.
\end{align*}
We have $S_2 = O \Big( \prod_{s=1}^{n} l^{(s)} \Big)$, $S_3 = O \Big( \prod_{s=1}^{n}l^{(s)} \Big)$, and
\begin{align*}
S_1 & = \sum_{i=1}^{n-n_0} \Big[ \prod_{s=1}^{i-1} \Big( l^{(s)} + O(K_{s+1}^n({\lambda})) \Big) \Big] \Big(\frac{\partial l^{(i)}}{\partial \lambda} K_{i+1}^n ({\lambda}) + O \Big( (K_{i+1}^n({\lambda}))^2 \Big) \Big) \\
& \le \sum_{i=1}^{n-n_0} \Big( \prod_{s=1}^{n} l^{(s)} \Big) A_n \frac{1}{l^{(i)}} \Big( \frac{\partial l^{(i)}}{\partial \lambda} + O(K_{i+1}^n ({\lambda})) \Big) + (n - n_0) C'' \varepsilon \prod_{s=1}^{n} l^{(s)}.
\end{align*}
Since the sum $\sum_{i=1}^{n-n_0} A_n \frac{K_{i+1}^n({\lambda})}{l^{(i)}}$ is bounded, we have
$$
S_1 \le \Big( \prod_{s=1}^{n} l^{(s)} \Big) \Big[ A_n \sum_{i=1}^{n - n_0} \frac{1}{l^{(i)}} \frac{\partial l^{(i)}}{\partial \lambda} + C''' + (n - n_0) C'' \varepsilon \Big].
$$
Taking into account \eqref{e.cond1}, we get
\begin{align*}
\frac{d}{d\lambda} K_1^n(\lambda) & = S_1 + S_2 + S_3 \\
& \le \Big( \prod_{s=1}^{n} l^{(s)} \Big) \Big[ -\delta n A_n + \widetilde{C} + n C'' \varepsilon \Big] \\
& < \Big( \prod_{s=1}^{n} l^{(s)} \Big) (C - \delta'n)
\end{align*}
for some uniform $C > 0$, $\delta'\in (0,1)$.
\end{proof}

\begin{proof}[Proof of Theorem \ref{t.ac2}]
Now let us show how Theorem \ref{t.ac2} follows from Lemma \ref{l.new04}, Proposition \ref{p.linelyap}, and Proposition \ref{p.main1}. Without loss of generality we assume that $ \frac{d}{d\lambda} Lyap^{u}(\mu_\lambda) \ge \delta > 0$ for all $\lambda \in J$. Due to Proposition~\ref{p.linelyap}, there are $C^* \subset C$, $\nu(C^*) > 1 - \frac{\varepsilon}{2}$, and $N_0 \in \Z_+$ such that for any $x \in \gamma_\lambda \circ h_{\lambda}^{-1}(C^*)$, the inequality \eqref{e.av1} holds. Fix any $p \in \gamma_\lambda \circ h_{\lambda}^{-1}(C^*)$ and consider any point $q \in \pi_\lambda(\Lambda_\lambda) \subset L_{\lambda}$ sufficiently close to $p$. Denote by $p(\lambda)$ and $q(\lambda)$ the smooth continuations of $p$ and $q$ as $\lambda$ varies.  Take $n \in \Z_+$, $n > N_0$, such that the distance between $f_{\lambda}^n(p)$ and $f_{\lambda}^n(q)$ is of order one. Let us introduce coordinates on each curve $L_\lambda$, $f_\lambda(L_\lambda)$, $f^2_\lambda(L_\lambda)$, $f^3_\lambda(L_\lambda)$, $\ldots$, $f^n_\lambda(L_\lambda)$, using the natural parametrization and taking $p \in L_\lambda$, $f_\lambda(p) \in f_\lambda(L_\lambda)$, $f^2_\lambda(p) \in f^2_\lambda(L_\lambda)$, $\ldots $ to be the origin. In these coordinates, set $k_\lambda^{(i)} : \mathbb{R}^1 \to \mathbb{R}^1$, $k_\lambda^{(i)}(x) = f_\lambda^{-1}(x) : f_{\lambda}^i (L_\lambda) \to f_\lambda^{i-1} (L_\lambda)$. Set $a^{(n)}(\lambda)$ to be the distance between $f_{\lambda}^n(p)$ and $f_{\lambda}^n(q)$, measured along $f_{\lambda}^n (L_\lambda)$. The family of maps $\{k_\lambda^{(i)}\}_{i=1, \ldots, n}$ satisfies the conditions of Lemma~\ref{l.new04}. Indeed, (\ref{e.cond1}) is given by (\ref{e.av1}),  (\ref{e.cond2}) follows immediately from the properties of invariant manifolds, and (\ref{e.cond3}) can be provided by (\ref{e.difflog}) and by using Lyapunov metric in a neighborhood of $\Lambda_\lambda$ if needed.
Therefore
$$
\frac{d}{d\lambda}\, \mathrm{dist}(p,q) \le \Big( \prod_{s=1}^{n} l^{(s)} \Big) (C - \delta'n) < 0.
$$
Let us denote $\bar p=\gamma_\lambda^{-1}(p)$ and $\bar q=\gamma_\lambda^{-1}(q)$. Then for some other constants $C'>0$ and $\delta''>0$ we also have
$$
\frac{d}{d\lambda}\, \mathrm{dist}(\bar p,\bar q) \le \Big( \prod_{s=1}^{n} l^{(s)} \Big) (C' - \delta''n) < 0.
$$
Indeed, notice that due to Lemma \ref{l.est1} we have
\begin{equation}\label{e.dist}
\mathrm{dist}(\bar p,\bar q) =O \Big( \prod_{s=1}^{n} l^{(s)} \Big),
\end{equation}
and if we denote $K_\lambda(x) = \frac{d}{dx} \Big( \gamma^{-1}_\lambda(x) \Big)$, then
\begin{align*}
\frac{d}{d\lambda} \, \mathrm{dist}(\bar p,\bar q) & = \frac{d}{d\lambda} \, \Big[ \int_p^q \frac{d}{dx} \Big( \gamma^{-1}_\lambda(x) \Big) \, dx \Big] \\
& = \Big( \frac{d}{d\lambda} q(\lambda) \Big) \cdot K_\lambda(q) - \Big( \frac{d}{d\lambda} p(\lambda) \Big) \cdot K_\lambda(p) + \int_p^q \frac{d}{d\lambda} K_\lambda(x) \, dx \\
& = (q'(\lambda) - p'(\lambda)) K_\lambda(q) + p'(\lambda) (K_\lambda(q) - K_\lambda(p)) + \int_p^q K'_\lambda(x) \, dx \\
& = \frac{d}{d\lambda} \, \mathrm{dist} (p,q) \cdot K_\lambda(q) + O(\mathrm{dist}(p,q)) \\
& = (C - \delta'n) \cdot O \Big( \prod_{s=1}^{n} l^{(s)} \Big) + O \Big( \prod_{s=1}^{n} l^{(s)} \Big) \\
& \le (C'-\delta''n) \Big( \prod_{s=1}^{n} l^{(s)} \Big) \\
& < 0
\end{align*}
for sufficiently large $n \in \Z_+$.

Suppose now that the measure $\eta$ is exact-dimensional and denote $d_{\eta} = \dim_H \eta$. By assumption we have $d_\eta + \dim_H \nu_\lambda > 1$ for every $\lambda \in J$. Choose $\delta > 0$ so that
\begin{equation}\label{e.deltachoice}
d_\eta + \dim_H \nu_\lambda > 1 + \delta
\end{equation}
throughout the compact interval $J$. Recall (see, e.g., \cite{MM}) that
\begin{equation}\label{e.dimentlyap}
\dim_H \nu_\lambda = \frac{h_{\mu_\lambda}(f_\lambda)}{Lyap^u(\mu_\lambda)}.
\end{equation}
Without loss of generality we may choose $\alpha, \beta, \gamma > 0$ such that
\begin{align}
\alpha & < \frac{1}{\log \ell} Lyap^u(\mu_\lambda) < \beta, \label{e.estimate1} \\
\gamma & < \frac{1}{\log \ell} h_{\mu_\lambda}(f_\lambda), \label{e.estimate2} \\
\frac{\beta - \gamma}{\alpha} & < 1, \label{e.estimate3} \\
\frac{\beta}{\alpha} & < 1 + \frac{\delta}{2}, \label{e.estimate4} \\
\frac{h_{\mu_\lambda}(f_\lambda)}{Lyap^u(\mu_\lambda)} - \frac{\gamma}{\alpha} & < \frac{\delta}{2} \label{e.estimate5}
\end{align}
for all $\lambda \in J$ (otherwise partition $J$ further into finitely many compact intervals so that the above choices may be made on each of these partition intervals separately, and then work on each of the intervals individually).

Let us show that if we take $\Pi_\lambda = \gamma^{-1}_\lambda \circ \pi_\lambda \circ H_\lambda$, then for all sufficiently large $k_0 \in \Z_+$, conditions \eqref{e.1}--\eqref{e.4} in Proposition~\ref{p.main1} hold.

We have
$$
d_\eta > 1 + \delta - \dim_H \nu_\lambda > \frac{\beta}{\alpha} - \frac{\gamma}{\alpha} = \max \Big\{ \frac{\beta - \gamma}{\beta}, \frac{\beta - \gamma}{\alpha} \Big\}
$$
throughout $J$. Here we used \eqref{e.deltachoice} in the first step, \eqref{e.dimentlyap}, \eqref{e.estimate4}, and \eqref{e.estimate5} in the second step, and \eqref{e.estimate1} in the third step. This verifies condition \eqref{e.1} in Proposition~\ref{p.main1}.

We have $\bar p = \Pi_\lambda(\omega)$ and $\bar q = \Pi_\lambda(\tau)$, and by \eqref{e.dist} we have
$$
\phi_{\omega, \tau}(\lambda) = \mathrm{dist}(\bar p,\bar q) = O \Big( \prod_{s=1}^{n} l^{(s)} \Big).
$$
By \eqref{e.estimate1} and (\ref{e.lyapnew}), we have
$$
\alpha < \frac{1}{\log \ell} Lyap^u(\mu_\lambda) = \frac{1}{\log \ell} \lim_{n \to \infty} \Big[ -\frac{1}{n} \sum_{s=1}^{n} \log l^{(s)} \Big].
$$
Thus, for all sufficiently large $n \in \Z_+$, we have $n \alpha \log \ell < -\log \Big( \prod_{s=1}^{n} l^{(s)} \Big)$, and hence
$$
\ell^{-\alpha |\omega \wedge \tau|} = \ell^{-n \alpha} > \Big( \prod_{s=1}^{n} l^{(s)} \Big) > C |\phi_{\omega, \tau}(\lambda)|,
$$
so \eqref{e.2} holds.

Since $\frac{d}{d\lambda}\, \mathrm{dist}(\bar p,\bar q) \le \Big( \prod_{s=1}^{n} l^{(s)} \Big) (C' - \delta''n)$ and $\beta > \frac{1}{\log \ell} Lyap^u(\mu_\lambda)$ (by \eqref{e.estimate1}), we also have that
$$
\ell^{-n \beta} = \ell^{-|\omega \wedge \tau|\beta} < C \Big( \prod_{s=1}^{n} l^{(s)} \Big) < \Big| \frac{d}{d\lambda} \, \mathrm{dist} (\bar p,\bar q) \Big| = \Big| \frac{d}{d\lambda} \, \phi_{\omega, \tau}(\lambda) \Big|,
$$
for $n$ large enough. This verifies condition \eqref{e.3}.

Due to the Shannon-McMillan-Breiman Theorem, for every $\theta > 0$ and properly chosen $\Omega_\varepsilon\subset \Sigma_A^\ell$, $\Omega_\varepsilon\subset \Omega$, where $\Omega$ is given by Proposition \ref{p.1},   $\mu(\Omega_\varepsilon) > 1-\varepsilon$, we have that $\mu([u]) \le C_\theta e^{-n (h_{\mu}(\sigma_A) - \theta)}$ for all words $u$ with $[u]\cap \Omega_\varepsilon\ne \emptyset$ of sufficiently large length $n$. Since $0 < \gamma < \frac{1}{\log \ell} h_{\mu}(\sigma_A)$ by \eqref{e.estimate2}, taking $\theta$ small enough, we get $\mu([u]) \le C \ell^{-\gamma n}$. This implies that condition \eqref{e.4} holds.

Thus, all the assumptions of Proposition~\ref{p.main1} hold and Theorem~\ref{t.ac2} follows.
\end{proof}

\section{The Density of States Measure of the Square Fibonacci Hamiltonian}

The Fibonacci Hamiltonian is  given by the following bounded self-adjoint operator in $\ell^2(\Z)$,
\begin{equation}\label{e.FibHam}
[H_{\lambda, \omega} \psi] (n) = \psi(n+1) + \psi(n-1) + \lambda \chi_{[1-\alpha , 1)}(n\alpha + \omega \!\!\! \mod 1) \psi(n),
\end{equation}
where $\lambda> 0$, $\alpha = \frac{\sqrt{5}-1}{2}$, and $\omega \in \T = \R / \Z$. It is well known and easy to see that the spectrum of $H_{\lambda,\omega}$ does not depend on $\omega$ and hence may be denoted by $\Sigma_\lambda$. Indeed, this follows quickly from the minimality of the irrational rotation by $\alpha$ and strong operator convergence (approximate a given $H_{\lambda, \omega}$ strongly by a suitable sequence $H_{\lambda, \tilde \omega + n_k \alpha}$ and apply, e.g., \cite[Theorem~VIII.24]{RS}; then switch the roles of $\omega$ and $\tilde \omega$).

Since spectral questions for Schr\"odinger operators in two (and higher) dimensions are hard to study, it is natural to consider a model where known one-dimensional results can be used. In particular, let us consider the Schr\"odinger operator
\begin{multline}\label{e.sfh}
[H^{(2)}_{\lambda_1, \lambda_2, \omega_1, \omega_2} \psi] (m,n) =  \psi(m+1,n) + \psi(m-1,n) + \psi(m,n+1) + \psi(m,n-1) + \\  + \Big(\lambda_1  \chi_{[1-\alpha , 1)}(m\alpha + \omega_1 \!\!\! \mod 1) + \lambda_2 \chi_{[1-\alpha , 1)}(n\alpha + \omega_2 \!\!\! \mod 1) \Big) \psi(m,n)
\end{multline}
in $\ell^2(\Z^2)$, where $\lambda_1, \lambda_2 > 0$ and $\omega_1, \omega_2 \in \T$. Again, the spectrum of $H^{(2)}_{\lambda_1, \lambda_2, \omega_1, \omega_2}$ is independent of $\omega_1, \omega_2$ and may therefore be denoted by $\Sigma^{(2)}_{\lambda_1,\lambda_2}$.

The operator $H^{(2)}_{\lambda_1, \lambda_2, \omega_1, \omega_2}$ is separable and hence its spectrum and spectral measure can be expressed in terms of the spectra and spectral measures of $H_{\lambda_1, \omega_1}$ and $H_{\lambda_2, \omega_2}$. In particular, we have
$$
\Sigma^{(2)}_{\lambda_1,\lambda_2} = \Sigma_{\lambda_1} + \Sigma_{\lambda_2}.
$$
Moreover, the density of states measure of the family $\{ H^{(2)}_{\lambda_1, \lambda_2, \omega_1, \omega_2} \}_{ \lambda_j \in \R, \omega_j \in \T }$ can be expressed as the convolution of the density of states measures associated with the families $\{ H_{\lambda_1, \omega_1} \}_{\omega_1 \in \T}$ and $\{ H_{\lambda_2, \omega_2} \}_{\omega_2 \in \T}$, that is,
\begin{equation}\label{e.2ddos}
\nu^{(2)}_{\lambda_1, \lambda_2} = \nu_{\lambda_1} \ast \nu_{\lambda_2}.
\end{equation}
See the appendix for these statements and further background.

The fact about the connection between the density of states measures may be combined with recent results for the density of states measures of the one-dimensional models \cite{DG11, DG12} to obtain the following theorem.

\begin{theorem}\label{cor.main}
Let $\nu^{(2)}_{{\lambda_1}, \lambda_2}$ be the density of states measure for the Square Fibonacci Hamiltonian \eqref{e.sfh} with coupling constants ${\lambda_1}, \lambda_2$. There is $\lambda^*>0$ such that % a neighborhood $W(0) \subset \mathbb{R}^2$ of the origin in $\mathbb{R}^2$ such that
for almost every pair $(\lambda_1, \lambda_2) \in [0, \lambda^*)\times [0,\lambda^*)$, the measure $\nu^{(2)}_{{\lambda_1}, \lambda_2}$ is absolutely continuous with respect to the Lebesque measure.
\end{theorem}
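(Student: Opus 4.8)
The plan is to derive Theorem~\ref{cor.main} from Theorem~\ref{t.ac}, applied to the trace-map description of the one-dimensional density of states measures $\nu_\lambda$, followed by a Fubini argument in the two coupling constants. By \eqref{e.2ddos} we have $\nu^{(2)}_{\lambda_1,\lambda_2} = \nu_{\lambda_1}\ast\nu_{\lambda_2}$, so everything reduces to understanding convolutions of one-dimensional density of states measures, and Theorem~\ref{t.ac} is tailor-made for this once $\nu_\lambda$ is recognized as a projection of a hyperbolic measure.

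First I would recall the dynamical picture for the weakly coupled Fibonacci Hamiltonian. For $\lambda$ in a sufficiently small interval $[0,\lambda^*)$, the results of \cite{DG11, DG12} identify $\nu_\lambda$ — via the affine identification of the line of initial conditions with the energy axis — with the projection along stable manifolds of the measure of maximal entropy $\mu_\lambda$ of the Fibonacci trace map $T_\lambda$ restricted to its invariant surface $S_\lambda$: there is a locally maximal, transitive, totally disconnected hyperbolic set $\Omega_\lambda$, depending continuously on $\lambda$ and topologically conjugate, via a conjugacy $H_\lambda$ depending continuously on $\lambda$, to a fixed subshift of finite type $\sigma_A\colon\Sigma_A^\ell\to\Sigma_A^\ell$; the symbolic measure $\mu$ is the measure of maximal entropy of $\sigma_A$, so $h_\mu(\sigma_A)>0$; one sets $\mu_\lambda = H_\lambda(\mu)$; and, by \cite{DG11}, the stable manifolds of $\Omega_\lambda$ are transversal to the affine line of initial conditions $L_\lambda$ for all sufficiently small $\lambda$. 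This is exactly the framework of Section~\ref{sec.3}, with $f_\lambda = T_\lambda|_{S_\lambda}$, $\gamma_\lambda$ the affine parametrization of $L_\lambda$, $\pi_\lambda$ the stable-manifold projection supplied by Lemma~\ref{l.pi}, and $\nu_\lambda = \gamma_\lambda^{-1}\circ\pi_\lambda\circ H_\lambda(\mu)$.

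Next I would verify the remaining hypothesis of Theorem~\ref{t.ac}, that $Lyap^u(\mu_\lambda)$ is a non-constant analytic function of $\lambda$: analyticity is immediate since the $T_\lambda$ form an analytic family of polynomial surface diffeomorphisms, so Proposition~\ref{p.anal} applies, while non-constancy follows from \cite{DG11, DG12}; for instance $h_{\mu_\lambda}(f_\lambda) = h_\mu(\sigma_A)$ is independent of $\lambda$ whereas, by \eqref{e.dimentlyap}, $\dim_H\nu_\lambda = h_{\mu_\lambda}(f_\lambda)/Lyap^u(\mu_\lambda)$ is non-constant near $0$ (indeed $\dim_H\nu_\lambda\to 1$ as $\lambda\downarrow0$ but $\dim_H\nu_\lambda<1$ for $\lambda\in(0,\lambda^*)$). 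It then remains to assemble the pieces: shrinking $\lambda^*>0$ if necessary, I would arrange that on $[0,\lambda^*)$ all of the above holds and, in addition, $\nu_\lambda$ is exact-dimensional with $\dim_H\nu_\lambda > \tfrac{3}{4}$ for every $\lambda\in[0,\lambda^*)$, which is possible because $\lambda\mapsto\dim_H\nu_\lambda$ is continuous with $\dim_H\nu_\lambda\to 1$ as $\lambda\downarrow 0$ (see \cite{DG12}). Fix any $\lambda_1\in(0,\lambda^*)$; then $\eta:=\nu_{\lambda_1}$ is a compactly supported exact-dimensional measure with $\dim_H\eta + \dim_H\nu_{\lambda_2} > \tfrac{3}{2} > 1$ for every $\lambda_2\in[0,\lambda^*)$, so Theorem~\ref{t.ac}, applied on compact subintervals exhausting $[0,\lambda^*)$, gives $\nu^{(2)}_{\lambda_1,\lambda_2} = \nu_{\lambda_1}\ast\nu_{\lambda_2}\ll\Leb$ for Lebesgue-a.e.\ $\lambda_2\in[0,\lambda^*)$. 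Since $\lambda_1\in(0,\lambda^*)$ was arbitrary, $\{\lambda_1 = 0\}$ is a nullset, and the set of pairs for which $\nu_{\lambda_1}\ast\nu_{\lambda_2}$ has an $L^2$ density is Borel (being the finiteness locus of a $\liminf$ of continuous functionals of $(\lambda_1,\lambda_2)$, as in the proof of Proposition~\ref{p.main}), Fubini's theorem on $[0,\lambda^*)\times[0,\lambda^*)$ yields absolute continuity of $\nu^{(2)}_{\lambda_1,\lambda_2}$ for a.e.\ pair $(\lambda_1,\lambda_2)$.

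The main obstacle is concentrated in the structural input of the first two steps: importing from \cite{DG11, DG12} that, uniformly for small $\lambda$, $\nu_\lambda$ arises as the stable-manifold projection of a measure of maximal entropy of a hyperbolic set satisfying the standing assumptions of Section~\ref{sec.3} — with stable manifolds transverse to the line of initial conditions — and that $Lyap^u(\mu_\lambda)$ is non-constant; the latter is precisely the delicate non-degeneracy condition behind Theorem~\ref{t.ac}. Once these facts are in place, the dimension bookkeeping, the application of Theorem~\ref{t.ac}, and the Fubini argument are routine.
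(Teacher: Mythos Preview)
Your proposal is correct and follows essentially the same route as the paper: identify $\nu_\lambda$ as the stable-manifold projection of the measure of maximal entropy for the trace map (via \cite{DG12}), invoke Proposition~\ref{p.anal} and the non-constancy of $d_\lambda$ to verify the hypotheses of Theorem~\ref{t.ac}, choose $\lambda^*$ small so that $d_\lambda$ exceeds $\tfrac12$ (you use $\tfrac34$, which is immaterial), fix $\lambda_1$ and apply Theorem~\ref{t.ac} in $\lambda_2$, then conclude by Fubini. You are in fact a bit more explicit than the paper about the non-constancy of $Lyap^u(\mu_\lambda)$ and the measurability needed for Fubini; the only quibble is that Theorem~\ref{t.ac} goes through Proposition~\ref{p.main1}, which yields absolute continuity but not an $L^2$ density, so your measurability justification should be phrased accordingly (e.g.\ via weak-$*$ continuity of $(\lambda_1,\lambda_2)\mapsto\nu_{\lambda_1}\!*\!\nu_{\lambda_2}$ and a suitable Borel criterion for absolute continuity), but this does not affect the argument.
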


\begin{remark}
In fact, it follows from the proof that {\rm (}with a uniform smallness condition{\rm )} for every  $\lambda_1\in [0, \lambda^*)$, the measure $\nu^{(2)}_{{\lambda_1}, \lambda_2}$ is absolutely continuous with respect to the Lebesque measure for almost every $\lambda_2\in [0, \lambda^*)$. Also, using the recent results from \cite{DGY14} {\rm (}where, in particular, it is shown that Theorem  \ref{t.exactdim} below holds for all values of the coupling constant $\lambda > 0${\rm )}, one can obtain a stronger version of Theorem \ref{cor.main}. Namely, for almost all pairs $(\lambda_1, \lambda_2)$ in the domain $\{(\lambda_1, \lambda_2)\ |\ \text{dim}_H\, \nu_{\lambda_1}+ \text{dim}_H\, \nu_{\lambda_2} > 1 \}$, the measure $\nu^{(2)}_{{\lambda_1}, \lambda_2}$ is absolutely continuous with respect to the Lebesque measure.
\end{remark}

\begin{proof}[Proof of Theorem~\ref{cor.main}]
We will use the fact, proven in \cite{DG12}, that the density of states measure of the Fibonacci Hamiltonian is closely related to the measures of maximal entropy for restrictions of the Fibonacci trace map to the level surfaces of the Fricke-Vogt invariant. Let us recall the setting.

There is a fundamental connection between the spectral properties of the Fibonacci Hamiltonian and the dynamics of the \textit{trace map}
$$
T : \Bbb{R}^3 \to \Bbb{R}^3, \; T(x,y,z)=(2xy-z,x,y).
$$
The function (sometimes called the Fricke-Vogt invariant)
\begin{equation}\label{e.FVinvariant}
G(x,y,z) = x^2+y^2+z^2-2xyz-1
\end{equation}
is invariant under the action of $T$, and hence $T$ preserves the family of cubic surfaces
$$
S_\lambda = \Big\{(x,y,z)\in \Bbb{R}^3 : x^2+y^2+z^2-2xyz=1+ \frac{\lambda^2}{4} \Big\}.
$$
It is therefore natural to consider the restriction $T_{\lambda}$ of the trace map $T$ to the invariant surface $S_\lambda$. That is, $T_{\lambda} : S_\lambda \to S_\lambda$, $T_{\lambda} = T|_{S_\lambda}$. We denote by $\Lambda_{\lambda}$ the set of points in $S_\lambda$ whose full orbits under $T_{\lambda}$ are bounded. It is known that $\Lambda_\lambda$ is equal to the non-wandering set of $T_\lambda$; indeed, it follows from \cite{Ro} that every unbounded orbit must escape to infinity together with a suitable neighborhood (either in positive or negative time), hence is wandering, and hyperbolicity of $\Lambda_\lambda$ implies that every point of $\Lambda_\lambda$ is non-wandering.

It is known that for $\lambda > 0$, $\Lambda_{\lambda}$ is a locally maximal compact transitive hyperbolic set of $T_{\lambda} : S_\lambda \to S_\lambda$; see \cite{Can, Cas, DG09}. Let us denote by $\mu_\lambda$ the measure of maximal entropy for $T_\lambda$. %, and by $Lyap^u(\mu_\lambda)$ and $Lyap^s(\mu_\lambda)$ the Lyapunov exponents with respect to this measure.

\begin{prop}\label{p.anal}
The stable and unstable Lyapunov exponents $Lyap^s(\mu_\lambda)$ and $Lyap^u(\mu_\lambda)$ are analytic functions of $\lambda > 0$.
\end{prop}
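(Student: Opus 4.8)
The plan is to reduce the statement to two facts: that the hyperbolic set $\Lambda_\lambda$, together with a conjugacy to a \emph{fixed} subshift of finite type and with its hyperbolic splitting, depends real-analytically on $\lambda$; and that, under this conjugacy, the measure of maximal entropy of $T_\lambda$ corresponds to one and the same parameter-independent measure. First I would record the elementary structural facts. The trace map $T$ is a polynomial automorphism of $\R^3$ (with polynomial inverse $T^{-1}(x,y,z)=(y,z,2yz-x)$), and for $\lambda>0$ the value $\lambda^2/4$ exceeds every critical value of $G$, so each $S_\lambda$ is a regular real-analytic surface; near a fixed $\lambda_0>0$ the family $\{S_\lambda\}$ foliates a neighborhood of $S_{\lambda_0}$ and can be trivialized real-analytically (e.g.\ via the gradient flow of $G$), which lets us regard $T_\lambda$ as a real-analytic family of real-analytic diffeomorphisms of the fixed surface $S_{\lambda_0}$. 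Since $\Lambda_\lambda$ is a locally maximal transitive totally disconnected hyperbolic set (by \cite{Can, Cas, DG09}), it has a Markov partition, and $T_\lambda|_{\Lambda_\lambda}$ is topologically conjugate to a subshift of finite type $(\Sigma_A^\ell,\sigma_A)$ via a Hölder conjugacy $H_\lambda$; by structural stability of hyperbolic sets and connectedness of $\{\lambda>0\}$, the transition matrix $A$ is independent of $\lambda$.

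The crux is to upgrade ``depends continuously'' to ``depends real-analytically.'' I would set this up by complexifying: the conjugacy $H_\lambda$ and the hyperbolic splitting $T_{H_\lambda(\omega)}S_\lambda = E^u_{H_\lambda(\omega)}\oplus E^s_{H_\lambda(\omega)}$ are characterized as the unique bounded solutions of graph-transform / fixed-point equations in suitable Banach spaces of Hölder-continuous sections over $\Sigma_A^\ell$; these equations have operators depending analytically on the parameter (because $T_\lambda$ does), the linearization at the solution is invertible (hyperbolicity/contraction of the graph transform), and so the analytic implicit function theorem in Banach space — the standard real-analytic counterpart of the Hirsch--Pugh--Shub persistence argument — yields that $\lambda\mapsto H_\lambda\in C^\alpha(\Sigma_A^\ell,\R^3)$ and $\lambda\mapsto E^{u/s}_{H_\lambda(\cdot)}$ (as $C^\alpha$ line fields along $\Lambda_\lambda$) are real-analytic. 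I expect this to be the main obstacle: it requires a careful choice of Banach spaces and a verification that all the relevant operators are analytic and that the combinatorial data stays frozen as $\lambda$ varies in a neighborhood of $\lambda_0>0$; it is, however, a known mechanism rather than a genuinely new difficulty.

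Granting that, the conclusion is short. Define the Hölder potential $\psi^u_\lambda(\omega):=\log\bigl\|DT_\lambda(H_\lambda(\omega))|_{E^u_{H_\lambda(\omega)}}\bigr\|$ on $\Sigma_A^\ell$; it is a real-analytic family in $C^\alpha(\Sigma_A^\ell)$, since $DT_\lambda$ is polynomial in the base point and analytic in $\lambda$, $H_\lambda$ and $E^u$ are analytic by the previous step, and $(L,v)\mapsto\log(\|Lv\|/\|v\|)$ is real-analytic wherever $Lv\neq0$ — which holds here because $\|DT_\lambda|_{E^u}\|\geq c>1$ on the hyperbolic set. Because $H_\lambda$ is a topological conjugacy onto $\Lambda_\lambda$, and $\Lambda_\lambda$ is the non-wandering set of $T_\lambda$ (so carries all the entropy), the measure of maximal entropy $\mu_\lambda$ is exactly $H_\lambda(\mu)$, where $\mu$ is the unique measure of maximal entropy of the transitive subshift $(\Sigma_A^\ell,\sigma_A)$ (the Parry measure), which does not depend on $\lambda$. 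Hence, by the change of variables and the Birkhoff/Oseledec theorem,
\[
Lyap^u(\mu_\lambda)=\int_{\Lambda_\lambda}\log\|DT_\lambda|_{E^u}\|\,d\mu_\lambda=\int_{\Sigma_A^\ell}\psi^u_\lambda\,d\mu ,
\]
which is the composition of the real-analytic map $\lambda\mapsto\psi^u_\lambda\in C^\alpha(\Sigma_A^\ell)$ with the bounded linear functional $\psi\mapsto\int\psi\,d\mu$, hence real-analytic in $\lambda$. The identical argument with $E^u$ replaced by $E^s$ handles $Lyap^s(\mu_\lambda)$. (Alternatively, one may avoid the Oseledec bookkeeping via thermodynamic formalism: $P(\lambda,t):=P_{\sigma_A}(t\psi^u_\lambda)$ is the logarithm of the simple isolated leading eigenvalue of a transfer operator on $C^\alpha(\Sigma_A^\ell)$ depending jointly real-analytically on $(\lambda,t)$, hence is jointly real-analytic by Kato's perturbation theory, and $Lyap^u(\mu_\lambda)=\partial_t P(\lambda,t)|_{t=0}$.)
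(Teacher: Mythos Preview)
Your proof is correct and shares the paper's overall architecture: write
\[
Lyap^u(\mu_\lambda)=\int_{\Sigma_A^\ell}\psi^u_\lambda\,d\mu
\]
with the \emph{fixed} Parry measure $\mu$, and then argue that the integrand depends analytically on $\lambda$. The divergence is in how that last analyticity is obtained. You do it by a real-analytic version of Hirsch--Pugh--Shub persistence (analytic implicit function theorem in a Banach space of H\"older sections), which is general and applies to any real-analytic family of surface diffeomorphisms; the paper's remark preceding the proof in fact points to \cite{LMM} and \cite{Po} for exactly this kind of general route. The paper instead takes a shortcut that exploits the \emph{polynomial} nature of the trace map: by Cantat's result \cite{Can} the bounded set $\Lambda_\lambda$ coincides with the (real) Julia set of $T_\lambda$, and a hyperbolic Julia set of a polynomial map moves holomorphically with the parameter \cite{J}; analyticity of $\lambda\mapsto H_\lambda(\omega)$ (and of the splitting, via the analyticity of the central-stable lamination) then comes essentially for free. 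So your argument is more robust---it would survive replacing $T_\lambda$ by any real-analytic family with a locally maximal hyperbolic set---while the paper's argument is shorter but tied to the complex-dynamical structure of polynomial automorphisms. Your thermodynamic-formalism alternative (analyticity of the leading transfer-operator eigenvalue and $Lyap^u=\partial_t P|_{t=0}$) is also sound and is closer in spirit to the $\zeta$-function approach of \cite{Po}.
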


\begin{remark}
In the case of an analytic family of Anosov diffeomorphisms, the analytic dependence of a continuation of a point in $\Lambda_\lambda$ on the parameter $\lambda$ follows from \cite{LMM}. A proof of Proposition \ref{p.anal} {\rm (}in fact, of a stronger version that covers families of real analytic diffeomorphisms{\rm )} that uses properties of dynamical $\zeta$-functions was recently presented in \cite{Po}. Here we present a shorter proof that uses the fact that $\Lambda_\lambda$ is the Julia set for the polynomial map $T_\lambda$.
\end{remark}

\begin{proof}[Proof of Proposition \ref{p.anal}]
The map $T_\lambda : \Lambda_\lambda \to \Lambda_\lambda$ is conjugated to a topological Markov chain $\sigma_A : \Sigma_A \to \Sigma_A$ (see \cite{DG09} for an explicit description of $\Sigma_A$), that is, there is a family of homeomorphisms $\Pi_\lambda : \Sigma_A \to \Lambda_\lambda$ such that $\Pi_\lambda \circ \sigma_A = T_\lambda\circ \Pi_\lambda$. The measure $\mu = \Pi^{-1}_\lambda (\mu_\lambda)$ is the measure of maximal entropy for $\sigma_A : \Sigma_A \to \Sigma_A$ and hence independent of $\lambda$. If we set $\varphi_\lambda : \Lambda_\lambda \to \mathbb{R}$, $\varphi_\lambda(x) = - \log \|DT_\lambda|_{E^u}\|$, then
\begin{equation}\label{e.int}
Lyap^u(\mu_\lambda) = - \int_{\Lambda_\lambda} \varphi_\lambda(x) \, d\mu_\lambda(x) = - \int_{\Sigma_A} \tilde{\varphi}_\lambda \, d\mu,
\end{equation}
where $\tilde{\varphi}_\lambda = \varphi_\lambda \circ \Pi_\lambda$.

Notice that $\{\Pi_\lambda(\omega)\}_{\lambda > 0}$ is an analytic curve and forms a central manifold of the partially hyperbolic set $\{\Lambda_\lambda\}_{\lambda > 0} \subset \mathbb{R}^3$. Indeed, from Theorem 5.1 in \cite{Can} we know that the Julia set of the map $T_\lambda$ must be contained in the real subspace and consists exactly of the points with bounded orbits, hence is equal to $\Lambda_\lambda$. On the other hand, a hyperbolic Julia set of a polynomial map moves holomorphically with a parameter, see \cite{J}. Besides, the corresponding central-stable manifold is an analytic surface, and hence for a fixed $\omega \subset \Sigma_A$, the function $\tilde{\varphi}_\lambda(\omega)$ is analytic in $\lambda$. Together with \eqref{e.int} this implies Proposition~\ref{p.anal}.
\end{proof}

The dynamics of the trace map and the spectrum of the Fibonacci Hamiltonian are related due to the following result \cite{S87}:

\begin{theorem}[S\"ut\H{o}, 1987]
An energy $E$ belongs to $\Sigma_\lambda$ if and only if the positive semiorbit of the point $(\frac{E-\lambda}{2}, \frac{E}{2}, 1)$ under iterates of the trace map $T$ is bounded.
\end{theorem}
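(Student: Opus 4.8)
The plan is to convert the spectral membership question into a statement about the $SL(2,\R)$ transfer-matrix cocycle of $H_{\lambda,0}$ (recall $\Sigma_\lambda$ does not depend on $\omega$), to identify the traces of the Fibonacci-block transfer matrices with the orbit appearing in the statement, and then to invoke the classical dichotomy ``$E$ lies in the spectrum $\iff$ the cocycle is not uniformly hyperbolic''.

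\emph{Transfer matrices and the trace recursion.} With $\omega=0$ the potential is $V(n)=\lambda\chi_{[1-\alpha,1)}(\{n\alpha\})$, and $(V(n))_{n\ge1}$ is the fixed point $w_\infty$ of the substitution $b\mapsto ba$, $a\mapsto b$, where $b$ carries the value $\lambda$ and $a$ the value $0$. For $E\in\R$ let $A(n,E)=\bigl(\begin{smallmatrix}E-V(n)&-1\\1&0\end{smallmatrix}\bigr)\in SL(2,\R)$, so that $H_{\lambda,0}\psi=E\psi$ is governed by the products $A(n,E)\cdots A(1,E)$, and let $M_k(E)$ be the transfer matrix over the prefix $w_k$ of $w_\infty$, where $w_{-1}=a$, $w_0=b$, $w_{k+1}=w_kw_{k-1}$. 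The concatenation rule for transfer matrices yields the Fibonacci recursion
\begin{equation}\label{e.Mrec}
M_{k+1}(E)=M_{k-1}(E)\,M_k(E),\qquad M_{-1}=\begin{pmatrix}E&-1\\1&0\end{pmatrix},\quad M_0=\begin{pmatrix}E-\lambda&-1\\1&0\end{pmatrix}.
\end{equation}
Setting $x_k(E)=\tfrac12\tr M_k(E)$ and using the $SL(2,\R)$ identity $\tr(XY)=\tr X\,\tr Y-\tr(XY^{-1})$ together with $M_{k-1}M_k^{-1}=M_{k-2}^{-1}$ (which follows from \eqref{e.Mrec}), I get $x_{k+1}=2x_kx_{k-1}-x_{k-2}$ for $k\ge0$, with $x_0=\tfrac{E-\lambda}2$, $x_{-1}=\tfrac E2$, and the convention $x_{-2}=1$ (the empty prefix). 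Hence $(x_k,x_{k-1},x_{k-2})=T^k\bigl(\tfrac{E-\lambda}2,\tfrac E2,1\bigr)$ for all $k\ge0$, so the positive semiorbit in the statement is bounded if and only if $\sup_k|x_k(E)|<\infty$. (One also checks $G$ on the seed triple and invokes its $T$-invariance to see the orbit stays on $S_\lambda$; this is not needed here but ties in with the rest of the paper.)

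\emph{The spectral dichotomy and the trace map.} I will use the standard fact about ergodic Schr\"odinger cocycles that $E\notin\Sigma_\lambda$ iff the transfer-matrix cocycle at energy $E$ over the hull of $V$ is uniformly hyperbolic; since the Fibonacci subshift is minimal, this is equivalent to the forward product sequence $A(n,E)\cdots A(1,E)$ (equivalently $\{M_k(E)\}$, by \eqref{e.Mrec}) admitting an exponential dichotomy. The elementary content of this equivalence: if there is a dichotomy one builds exponentially decaying Weyl solutions at $+\infty$ and at $-\infty$ which are linearly independent, so $(H_{\lambda,0}-E)^{-1}$ exists (the same holds for $E'$ near $E$, dichotomy being an open condition), whence $E\notin\Sigma_\lambda$; conversely, if there is no dichotomy then no solution decays exponentially and a Schnol-type argument places $E$ in the spectrum. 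It thus remains to prove the dynamical equivalence: \emph{$\{M_k(E)\}$ has an exponential dichotomy $\iff$ $\{x_k(E)\}$ is unbounded}. For ``$\Leftarrow$'': by S\"ut\H{o}'s escape lemma for the recursion $x_{k+1}=2x_kx_{k-1}-x_{k-2}$ (an elementary induction distinguishing a few cases by the signs and sizes of $|x_{k-1}|,|x_k|$), unboundedness forces a $k_0$ with $|x_k|>1$, i.e.\ $|\tr M_k|>2$, for all $k\ge k_0$, together with doubly-exponential growth $|x_{k+1}|\gtrsim|x_k|\,|x_{k-1}|$; then each $M_k$ ($k\ge k_0$) is hyperbolic, and using \eqref{e.Mrec} and the rapid growth of the traces the contracting directions of the partial products converge geometrically, producing a uniform exponential dichotomy. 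For ``$\Rightarrow$'': if $\{x_k\}$ is bounded then S\"ut\H{o}'s analysis of how the ``type'' of a consecutive triple $(M_{k-1},M_k,M_{k+1})$ propagates under \eqref{e.Mrec}---this is the step where bounded \emph{traces} are finally leveraged to control \emph{norms}---yields a sub-exponential bound $\|M_k(E)\|\le C\cdot F_k$ (even a polynomial bound in the block length suffices), which rules out an exponential dichotomy. Combining the two equivalences proves the theorem.

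\emph{Main obstacle.} The heart of the argument is the direction ``bounded orbit $\Rightarrow$ sub-exponential growth of $\|M_k\|$'': for $SL(2,\R)$ matrices a bound on $\tfrac12\tr$ alone never controls the norm (e.g.\ $\bigl(\begin{smallmatrix}0&-t\\ t^{-1}&0\end{smallmatrix}\bigr)$ has trace $0$ and norm $\max(t,t^{-1})$), so one is forced to exploit the precise concatenation structure \eqref{e.Mrec} and carry out a combinatorial case analysis tracking which matrices in a consecutive triple are large. The escape direction is comparatively soft, but still requires care to upgrade pointwise trace growth to a \emph{uniform} hyperbolic splitting, and to verify that the exponentially decaying solutions at $+\infty$ and at $-\infty$ are genuinely linearly independent (so that $E$ lies in the resolvent set rather than being an embedded eigenvalue).
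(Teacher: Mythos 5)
The paper does not prove this statement: it is quoted verbatim from S\"ut\H{o}'s 1987 paper \cite{S87} and used as a black box, so there is no internal proof to compare against. Judged on its own, the first half of your argument is complete and correct: the identification of $(V(n))_{n\ge 1}$ with the Fibonacci substitution word, the concatenation rule $M_{k+1}=M_{k-1}M_k$, the trace recursion $x_{k+1}=2x_kx_{k-1}-x_{k-2}$ via the $SL(2,\R)$ identity, and the seed $(x_0,x_{-1},x_{-2})=(\tfrac{E-\lambda}{2},\tfrac{E}{2},1)$ exactly reproduce the orbit in the statement. The overall architecture of the second half (spectral complement $\Leftrightarrow$ uniform hyperbolicity $\Leftrightarrow$ trace escape) is also the correct modern route; S\"ut\H{o}'s original argument instead runs through periodic approximants, showing $\sigma_{k+1}\cup\sigma_{k+2}\subseteq\sigma_k\cup\sigma_{k+1}$ for $\sigma_k=\{E:|x_k(E)|\le 1\}$ and identifying $\Sigma_\lambda$ with $\bigcap_k(\sigma_k\cup\sigma_{k+1})$, which avoids invoking Johnson's theorem.

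That said, as written this is a roadmap rather than a self-contained proof, because the two pivotal lemmas are cited by name from the very source being proved: (i) the escape lemma (two consecutive traces of modulus $>1$ force super-exponential escape, and unboundedness forces this configuration), and (ii) the combinatorial ``type'' analysis showing bounded traces imply at most power-law growth of $\|M_k\|$. You correctly flag (ii) as the heart of the matter --- a trace bound alone never controls an $SL(2,\R)$ norm --- but you do not supply the case analysis, so the implication ``bounded orbit $\Rightarrow$ no dichotomy'' is asserted, not proved. Two further points are glossed: the passage from one-sided trace growth at the single phase $\omega=0$ to uniform hyperbolicity over the whole hull (needed to conclude $E$ is in the resolvent set of the whole-line operator, including control of the transfer matrices for $n\le 0$), and the linear independence of the decaying solutions at $\pm\infty$. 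These are standard but each requires an argument; filling in (i) and (ii) would essentially reproduce \cite{S87}.
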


Consider the line $L_\lambda = \{ (\frac{E-\lambda}{2}, \frac{E}{2}, 1) : E \in \R \}$ and let
\begin{equation}\label{e.ident}
\gamma_\lambda : \R \to L_\lambda, \  E \mapsto \Big( \frac{E - \lambda}{2} , \frac{E}{2} , 1 \Big).
\end{equation}

Moreover, the measures of maximal entropy $\mu_\lambda$ for the trace map are related to the density of states measures $\nu_\lambda$ associated with the one-parameter families of Fibonacci Hamiltonians. Namely, we have \cite{DG12}:

\begin{theorem}[DG, 2012]
For small values of the coupling constant $\lambda > 0$, the following holds. Consider a normalized restriction of the measure of maximal entropy for the trace map to an element of a Markov partition. The projection of this measure to $L_\lambda$ along the stable manifolds of the hyperbolic set $\Lambda_\lambda$ is equal to the normalized restriction of the measure $\gamma_\lambda (\nu_\lambda)$ to the image of the projection.
\end{theorem}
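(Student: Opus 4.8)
The plan is to transport both measures to the symbolic model and to identify them there by comparing their masses on a single generating family of cylinder sets, which on the spectral side are exactly the ``bands'' appearing in the trace-map description of $\Sigma_\lambda$. Fix a Markov partition $\Lambda_\lambda = M_1 \sqcup \cdots \sqcup M_\ell$ of the kind produced by Lemma~\ref{l.pi}, so the sets $\pi_\lambda(M_i) \subset L_\lambda$ are pairwise disjoint, and let $\Pi_\lambda : \Sigma_A \to \Lambda_\lambda$ be the conjugacy, so that $\mu = \Pi_\lambda^{-1}(\mu_\lambda)$ is the ($\lambda$-independent) measure of maximal entropy of $\sigma_A$ and $[i] = \Pi_\lambda^{-1}(M_i)$ up to boundary. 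Writing $\Psi_\lambda = \gamma_\lambda^{-1}\circ\pi_\lambda\circ\Pi_\lambda : \Sigma_A \to \R$ and using that $\gamma_\lambda^{-1}(\pi_\lambda(\Lambda_\lambda)) = \Sigma_\lambda$ (the geometric content of S\"ut\H{o}'s theorem combined with the hyperbolicity of $\Lambda_\lambda$), the assertion to be proved becomes
\[
\frac{\Psi_\lambda(\mu|_{[i]})}{\mu([i])} = \frac{\nu_\lambda|_{\Psi_\lambda([i])}}{\nu_\lambda(\Psi_\lambda([i]))}, \qquad i = 1,\dots,\ell,
\]
where now $\nu_\lambda$ is the density of states measure. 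For an admissible word $w$ with $w_0 = i$ put $B_w := \Psi_\lambda([w]) = \gamma_\lambda^{-1}(\pi_\lambda(R_w))$, where $R_w$ is the rectangle of the refined Markov partition $\bigvee_{j=0}^{|w|-1} T_\lambda^{-j}(\{R_m\})$ coded by $w$. Since the $B_w$ with $|w| = n$ extending $i$ cover $\Psi_\lambda([i]) \cap \Sigma_\lambda$ up to a $\nu_\lambda$-null set --- the overlaps lie on boundaries of Markov rectangles, whose $\Psi_\lambda$-images are endpoints of gaps of $\Sigma_\lambda$, which carry no $\nu_\lambda$-mass because $\nu_\lambda$ is non-atomic by gap labeling --- it is enough to prove $\nu_\lambda(B_w)/\nu_\lambda(B_i) = \mu([w])/\mu([i])$ for every such $w$.

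The two sides are computed by two separate recipes that must be matched. On the dynamical side one uses the explicit trace-map and Markov analysis of $\Lambda_\lambda$ for small $\lambda$ (\cite{S87, Cas, DG09, Can}): with $x_k(E) = \tfrac12\tr M_k(E)$ the half-traces of the transfer matrices over the Fibonacci approximants, the $T$-orbit of $\gamma_\lambda(E)$ reads off $(x_k(E))_k$; the period-$F_k$ approximant has spectrum $\Sigma_\lambda^{(k)} = \{E : |x_k(E)| \le 1\}$, a disjoint union of exactly $F_k$ bands; and $\Sigma_\lambda = \bigcap_k \bigl(\Sigma_\lambda^{(k)} \cup \Sigma_\lambda^{(k+1)}\bigr)$, the two combinatorial ``types'' of bands splitting under the Fibonacci rule encoded by the transition matrix $A$. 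Its geometric counterpart is the identification $\pi_\lambda(R_w) = \gamma_\lambda(B_w)$ of each refined Markov rectangle with a band of this hierarchy, so that the way $B_w$ sits inside $B_{w'}$, for $w$ a one-letter extension of $w'$, is governed by the $A$-transitions. On the spectral side, the input is Floquet theory --- over each of the $F_k$ bands of $\Sigma_\lambda^{(k)}$ the integrated density of states of the periodic approximant increases by exactly $1/F_k$ --- together with the uniform convergence of the integrated densities of states under periodic approximation. Combining the band combinatorics with these two facts exhibits $\nu_\lambda(B_w)$ as a multiplicative cascade along $w$ whose one-step weights are precisely the transition probabilities of the Parry measure of the transitive subshift $\sigma_A$; equivalently, the measure $\omega \mapsto \nu_\lambda(\Psi_\lambda([\omega_0\cdots\omega_{n-1}]))$ is $\sigma_A$-invariant, the number of cylinders it charges at level $n$ grows like $F_n$ so that its entropy equals $h_{\mathrm{top}}(\sigma_A)$, and one finishes by uniqueness of the measure of maximal entropy. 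This yields $\nu_\lambda(B_w)/\nu_\lambda(B_i) = \mu([w])/\mu([i])$, and pushing forward through $\Psi_\lambda$ and undoing the identifications gives the theorem.

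The main obstacle is precisely the matching in the preceding paragraph: showing that the pieces of the refined Markov partition of $\Lambda_\lambda$ project under $\pi_\lambda$ onto exactly the bands of the hierarchical decomposition of $\Sigma_\lambda$, with the two band types and all multiplicities correctly accounted for, and then recognizing the resulting density of states cascade as the Parry measure. This relies on the detailed hyperbolicity analysis and explicit Markov coding of the weakly coupled Fibonacci trace map (\cite{Cas, DG09, Can}) --- which is why the hypothesis requires $\lambda$ small --- and on bookkeeping the normalizations, since the statement is only an equality after restriction to a single Markov element and renormalization, the disjointness of the $\pi_\lambda(M_i)$ from Lemma~\ref{l.pi} being what makes the various restrictions fit together. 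The non-atomicity and the gap labels of $\nu_\lambda$, used to make the band tiling exact up to $\nu_\lambda$-null sets, are a further but routine ingredient.
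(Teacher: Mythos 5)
First, note that the paper does not prove this statement at all: it is quoted as an external result from \cite{DG12}, so there is no in-paper argument to compare against. Your strategy --- transport to the symbolic model, identify refined Markov rectangles projected to $L_\lambda$ with the bands of the periodic-approximant hierarchy, use Floquet theory to give each of the $F_k$ bands of the $F_k$-periodic approximant weight exactly $1/F_k$, and match the resulting cascade with the Parry measure --- is essentially the argument of \cite{DG12}, and you correctly flag the genuinely hard input (the band/Markov-rectangle correspondence for small $\lambda$ from \cite{Cas, DG09, Can}) rather than pretending it is free.

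The one step that is wrong as written is the ``equivalently'' clause at the end: the band-counting measure $\rho([w]) := \lim_m \#\{\text{order-}m\text{ bands in } B_w\}/F_m$ is \emph{not} $\sigma_A$-invariant, and ``charges about $F_n$ cylinders at level $n$'' does not force its entropy to be maximal, so you cannot conclude by uniqueness of the measure of maximal entropy. Indeed, writing $\theta_A$ for the Perron eigenvalue of $A$ and $r$ for a right Perron eigenvector, the counting limit gives $\rho([w]) = c\, r_{w_{n-1}}\theta_A^{-(n-1)}$, whence $\sum_j \rho([jw]) = \#\{j : A_{jw_0}=1\}\,\theta_A^{-1}\rho([w]) \ne \rho([w])$ in general (e.g.\ for the golden-mean shift). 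Fortunately this clause is redundant: the sentence preceding it already contains the correct finish. The one-step conditional weights $\rho([wa])/\rho([w]) = r_a/(\theta_A r_{w_{n-1}})$ coincide with the forward transition probabilities of the Parry measure, and two measures on $[i]$ with total mass $1$ and the same conditional cascade coincide; this is exactly why the theorem asserts equality only of \emph{normalized restrictions} to a Markov element (the unnormalized measures differ, since the Parry cylinder weight carries an extra factor $l_{w_0}$ depending on the first symbol which $\rho$ does not see). Strike the invariance/entropy/uniqueness sentence and rely on the transition-probability matching, and the sketch is sound modulo the cited band-combinatorics input.
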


This implies, in  particular, the following result \cite{DG12}:

\begin{theorem}[DG, 2012]\label{t.exactdim}
There exists $0 < \lambda_0 \le \infty$ such that for $\lambda \in (0,\lambda_0)$, there is $d_\lambda \in (0,1)$ so that the density of states measure $\nu_\lambda$ is of exact dimension $d_\lambda$, that is, for $\nu_\lambda$-almost every $E \in \R$, we have
$$
\lim_{\varepsilon \downarrow 0} \frac{\log \nu_\lambda(E - \varepsilon , E + \varepsilon)}{\log \varepsilon} = d_\lambda.
$$
Moreover, in $(0,\lambda_0)$,  $d_\lambda$ is a $C^\infty$ function of $\lambda$, and
$$
\lim_{\lambda \downarrow 0} d_\lambda = 1.
$$
\end{theorem}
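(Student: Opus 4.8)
The plan is to pass from the real line to the trace‑map picture and then invoke the dimension theory of hyperbolic measures. Since the parametrization $\gamma_\lambda$ in \eqref{e.ident} is affine with nonvanishing derivative, $\nu_\lambda$ is exact‑dimensional of dimension $d_\lambda$ if and only if $\gamma_\lambda(\nu_\lambda)$ is, so it suffices to study $\gamma_\lambda(\nu_\lambda)$ on $L_\lambda$. By the theorem of \cite{DG12} quoted above, for small $\lambda>0$ the measure $\gamma_\lambda(\nu_\lambda)$, restricted and normalized on the image $\pi_\lambda(M_i)$ of each element $M_i$ of a Markov partition of $\Lambda_\lambda$ (these images being pairwise disjoint by Lemma~\ref{l.pi}), coincides with the corresponding normalized restriction of the pushforward $\pi_\lambda(\mu_\lambda)$ of the measure of maximal entropy. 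Transporting through the conjugacy $H_\lambda:\Sigma_A\to\Lambda_\lambda$ and recalling that $\mu:=H_\lambda^{-1}(\mu_\lambda)$ is the ($\lambda$‑independent) measure of maximal entropy of $\sigma_A$, we are reduced to showing that $\Psi_\lambda(\mu)$, with $\Psi_\lambda:=\pi_\lambda\circ H_\lambda:\Sigma_A\to L_\lambda$, is exact‑dimensional with dimension $h_0/Lyap^u(\mu_\lambda)$, where $h_0=h_{top}(\sigma_A)=h_{\mu_\lambda}(T_\lambda)$ is constant in $\lambda$. Finitely many such pieces then assemble to give exact‑dimensionality of $\gamma_\lambda(\nu_\lambda)$, hence of $\nu_\lambda$, with the same value.

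To prove exact‑dimensionality of $\Psi_\lambda(\mu)$ I would run the standard argument underlying the Ledrappier--Young / Manning‑type formula in this conformal‑unstable situation; the value $h_0/Lyap^u(\mu_\lambda)$ for $\dim_H\nu_\lambda$ is already recorded via \eqref{e.dimentlyap} and \cite{MM}, and what must be upgraded is the pointwise a.e. statement. First, the measure of maximal entropy $\mu$ on $\Sigma_A$ is a Gibbs measure for the zero potential, so $\mu([\omega_0\ldots\omega_{n-1}])\asymp e^{-nh_0}$ uniformly. Second, for $\omega\in\Sigma_A$ the image $H_\lambda([\omega_0\ldots\omega_{n-1}])$ is a generation‑$n$ piece of $\Lambda_\lambda$, which by bounded distortion for the hyperbolic set has unstable width comparable to $\exp\bigl(-\sum_{i=0}^{n-1}\log\|DT_\lambda|_{E^u}(T_\lambda^i H_\lambda(\omega))\|\bigr)$; since $L_\lambda$ is transversal to the stable manifolds of $\Lambda_\lambda$ with angles bounded below, $\diam\Psi_\lambda([\omega_0\ldots\omega_{n-1}])$ is comparable to the same quantity. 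The local product structure of the Gibbs measure $\mu_\lambda$ then shows that $\Psi_\lambda(\mu)$ of a ball of radius $r\asymp\exp(-\sum_{i<n}\log\|DT_\lambda|_{E^u}(T_\lambda^iH_\lambda(\omega))\|)$ about $\Psi_\lambda(\omega)$ is comparable to $\mu([\omega_0\ldots\omega_{n-1}])\asymp e^{-nh_0}$ (the stable "tube" over such a ball carries mass comparable to that of the corresponding unstable interval). By the Birkhoff ergodic theorem for $(\Sigma_A,\sigma_A,\mu)$, $\frac1n\sum_{i<n}\log\|DT_\lambda|_{E^u}(T_\lambda^iH_\lambda(\omega))\|\to Lyap^u(\mu_\lambda)$ for $\mu$‑a.e. $\omega$; dividing the two asymptotics gives $\lim_{r\downarrow0}\frac{\log\Psi_\lambda(\mu)(B_r(\Psi_\lambda(\omega)))}{\log r}=h_0/Lyap^u(\mu_\lambda)$ for $\mu$‑a.e. $\omega$, i.e. $\nu_\lambda$ is exact‑dimensional with $d_\lambda=h_0/Lyap^u(\mu_\lambda)$.

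For the remaining claims: $d_\lambda>0$ since $h_0>0$; $d_\lambda\le1$ by the Ruelle inequality $h_0\le Lyap^u(\mu_\lambda)$; and $d_\lambda<1$ since equality in Ruelle would force $\mu_\lambda$ to be an SRB measure, impossible because its unstable conditionals are supported on the zero‑length Cantor slices of $\Lambda_\lambda$ (recall $\Sigma_\lambda$ has zero Lebesgue measure). Smoothness: $d_\lambda=h_0/Lyap^u(\mu_\lambda)$ with $h_0$ constant, while $Lyap^u(\mu_\lambda)$ is analytic and strictly positive in $\lambda>0$ by Proposition~\ref{p.anal}; hence $d_\lambda$ is analytic, in particular $C^\infty$, on $(0,\lambda_0)$. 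Finally, $\lim_{\lambda\downarrow0}d_\lambda=1$ is equivalent to $Lyap^u(\mu_\lambda)\to h_0$ as $\lambda\downarrow0$. I would establish this by analyzing the degeneration of $T_\lambda|_{\Lambda_\lambda}$ as $\lambda\to0$: the surface $S_0$ is the singular Cayley cubic, on whose bounded component the trace map is classically conjugate, away from its finitely many conical singular points, to a hyperbolic automorphism of $\T^2$, whose topological entropy equals $h_{top}(\sigma_A)=h_0$ and whose Lyapunov exponent with respect to the measure of maximal entropy (Lebesgue) therefore also equals $h_0$. Using that $T_\lambda$ is $C^1$‑close to $T_0$ on compact subsets of $S_0$ away from the singularities, together with a quantitative estimate controlling the contribution to $Lyap^u(\mu_\lambda)$ coming from the part of $\mu_\lambda$ carried near those singularities, one obtains $Lyap^u(\mu_\lambda)\to h_0$, hence $d_\lambda\to1$.

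I expect this last step to be the main obstacle. Proposition~\ref{p.anal} supplies analyticity only for $\lambda>0$ and genuinely fails at $\lambda=0$, where $S_0$ is singular and $T_0$ is not uniformly hyperbolic at its conical points; since $\Lambda_\lambda$ accumulates on the singular locus of $S_0$ as $\lambda\downarrow0$, the convergence $Lyap^u(\mu_\lambda)\to h_0$ must be proved by hand, controlling the geometry of $\Lambda_\lambda$ (cone fields, transversality and distortion constants, and the $\mu_\lambda$‑mass of the near‑singular regions) uniformly as $\lambda\downarrow0$ — precisely the kind of delicate small‑coupling analysis of the Fibonacci trace map carried out in \cite{DG09, DG12}. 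By contrast, the exact‑dimensionality step is essentially routine once one has the Gibbs property of $\mu_\lambda$, bounded distortion, the local product structure, and the transversality hypothesis; the only point requiring care there is to invoke the \cite{DG12} identification of $\gamma_\lambda(\nu_\lambda)$ with (pieces of) $\pi_\lambda(\mu_\lambda)$ on a set of full $\nu_\lambda$‑measure, using the finitely many Markov rectangles together with Lemma~\ref{l.pi}.
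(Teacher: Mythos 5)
Your proposal is correct and follows exactly the route this paper indicates for Theorem~\ref{t.exactdim}: the paper does not reprove the result but quotes it from \cite{DG12}, deriving it from the identification of $\gamma_\lambda(\nu_\lambda)$ with the stable-manifold projection of the measure of maximal entropy, the formula $d_\lambda = h_{top}(T_\lambda)/Lyap^u(\mu_\lambda)$ of \eqref{e.dimentlyap}, and the analyticity of $Lyap^u(\mu_\lambda)$ from Proposition~\ref{p.anal}. You also correctly isolate the genuinely delicate point, namely $Lyap^u(\mu_\lambda)\to h_{top}$ as $\lambda\downarrow 0$ via the degeneration of $T_\lambda$ to the pseudo-Anosov map on the Cayley cubic, which is precisely the small-coupling analysis carried out in \cite{DG09, DG11, DG12}.
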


\begin{remark}
In fact, $d_\lambda$ is an analytic function of $\lambda$; this follows from Proposition~\ref{p.anal} {\rm (}the analyticity of the Lyapunov exponent $Lyap^u(\mu_\lambda)${\rm )}, formula \eqref{e.dimentlyap}, and the fact that for the measure of maximal entropy, $h_{\mu_\lambda}(T)= h_{top}(T)$. See also \cite{Po}.
\end{remark}

Let us now choose $\lambda^*\in (0,\lambda_0)$ such that $d_\lambda>\frac{1}{2}$ for all $\lambda\in [0, \lambda^*]$ and fix  $\lambda_1 \in [0, \lambda^*]$. Then due to Theorem~\ref{t.exactdim}, the density of states measure $\nu_{\lambda_1}$ is exact-dimensional, with dimension $d_{\lambda_1}>\frac{1}{2}$. We are now  in the setting of Theorem~\ref{t.ac} with $f_\lambda = T_\lambda$. Therefore for almost all  $\lambda_2 \in [0, \lambda^*]$, the convolution $\nu_{\lambda_1} \ast \nu_{\lambda_2}$ is absolutely continuous. By \eqref{e.2ddos}, this completes the proof of Theorem~\ref{cor.main}.
\end{proof}

\section{Questions and Open Problems}

In this section we state a few questions and open problems that are suggested by the results of this paper.

\begin{enumerate}

\item We conjecture that $d_\lambda$ is a monotone function of $\lambda$; this would show how the domain $\{(\lambda_1, \lambda_2)\ |\ d_{\lambda_1}+ d_{\lambda_2}>1\}$ (where Theorem \ref{cor.main} holds for almost all pairs $(\lambda_1, \lambda_2)$) look like.

\item While in Proposition~\ref{p.main} we obtain $L^2$ regularity of the density, we cannot draw this conclusion in Proposition~\ref{p.main1} due to the way we approximate the measure in question in the proof. Is it still true that the density is $L^2$ in the setting of Proposition~\ref{p.main1}?

\item Related to the previous question, can we strengthen the regularity statement that can be obtained for the density? We expect the work \cite{PeSch} of Peres and Schlag to be relevant to this question.

\item The fact that we have to exclude a zero-measure set of pairs of small coupling constants in Theorem~\ref{cor.main} seems to be an artifact of the proof. Can one do away with this exclusion of exceptional pairs?

\item Can one prove a result similar to Theorem~\ref{cor.main} for the one-parameter family (of families) obtained when setting $\lambda_1 = \lambda_2 =: \lambda$? That is, is it true that for (almost all) sufficiently small $\lambda > 0$, the density of states measure associated with $\{ H^{(2)}_{\lambda, \lambda, \omega_1, \omega_2} \}_{ \lambda \in \R, \omega_j \in \T }$ is absolutely continuous? Heuristically, this should be true. However, this seems to be very difficult to establish and is well beyond the scope of our method.

\item What happens as the coupling constant is increased? Recall that the spectrum starts out being an interval at small coupling \cite{DG11}, makes a transition through a regime that is not understood yet, but which may involve Cantorval structures (see \cite{MO} for definition of a Cantorval), as the coupling is increased, and becomes a zero-measure Cantor set in the large coupling regime \cite{DEGT}. This has been studied numerically in \cite{EL06, EL07, ILML}, and most recently also in \cite{DEG}. In particular, there are numerical estimates of threshold values of $\lambda$, where transitions are expected to occur. Recall also that the spectrum is the topological support of the density of states measure, so that the results just mentioned are relevant to these coupling constant-dependent measures as well. What about similar threshold values of the coupling constant as other features of the density of states measure are concerned, such as the transition from absolute continuity to singularity, or the transition from one-dimensionality to dimension strictly less than one, etc.?

\item As usual, one can ask how the results obtained for Fibonacci-based models extend to Sturmian-based models, that is, when the inverse of the golden ratio, $\frac{\sqrt{5}-1}{2}$, is replaced by a general irrational $\alpha \in (0,1)$ in \eqref{e.FibHam} and \eqref{e.sfh}. The one-dimensional case has been investigated to a great extent (see, e.g., \cite{BIST, DKL, DL, FLW, LPW, Mar1, Mar2, M, Ra}, among many others), thereby opening the door for a study of separable models in higher dimensions based on these one-dimensional operators.

\end{enumerate}

\begin{appendix}

\section{Separable Potentials and Operators}

Let $d \ge 1$ be an integer and assume that for $1 \le j \le d$, we have bounded maps $V_j : \Z \to \R$. Consider the associated Schr\"odinger operators on $\ell^2(\Z)$,
\begin{equation}\label{f.1doper}
[H_j \psi](n) = \psi(n+1) + \psi(n-1) + V_j(n) \psi(n).
\end{equation}
Furthermore, we let $V : \Z^d \to \R$ be given by
\begin{equation}\label{f.sumpot}
V(n) = V_1(n_1) + \cdots + V_d(n_d),
\end{equation}
where we express an element $n$ of $\Z^d$ as $n = (n_1,\ldots,n_d)$ with $n_j \in \Z$.

Finally, we introduce the Schr\"odinger operator on $\ell^2(\Z^d)$ with potential $V$, that is,
\begin{equation}\label{f.2doper}
[H \psi](n) = \Big( \sum_{j = 1}^d \psi(n+e_j) + \psi(n-e_j) \Big) + V(n) \psi(n).
\end{equation}
Here, $e_j$ denotes the element $n$ of $\Z^d$ that has $n_j = 1$ and $n_k = 0$ for $k \not= j$.

Potentials of the form \eqref{f.sumpot} and Schr\"odinger operators of the form \eqref{f.2doper} are called separable. Operators of this or of a similar form have been studied, for example, in \cite{BS, DG11, S}.

Let us first state some known results for separable Schr\"odinger operators.

\begin{prop}\label{p.products}
{\rm (a)} The spectrum of $H$ is given by
$$
\sigma(H) = \sigma(H_1) + \cdots + \sigma(H_d).
$$
{\rm (b)} Given $\psi_1, \ldots, \psi_d \in \ell^2(\Z)$, denote by $\mu_j$ the spectral measure corresponding to $H_j$ and $\psi_j$. Furthermore, denote by $\mu$ the spectral measure corresponding to $H$ and the element $\psi$ of $\ell^2(\Z^d)$ given by $\psi(n) = \psi_1(n_1) \cdots \psi_d(n_d)$. Then,
$$
\mu = \mu_1 \ast \cdots \ast \mu_d.
$$
\end{prop}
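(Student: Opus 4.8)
The plan is to reduce both parts to the tensor-product structure of separable operators. First I would introduce the canonical unitary $U\colon \ell^2(\Z^d)\to \ell^2(\Z)\otimes\cdots\otimes\ell^2(\Z)$ ($d$ factors) determined by $\delta_n\mapsto \delta_{n_1}\otimes\cdots\otimes\delta_{n_d}$, and check the routine fact that under $U$ the coordinate shift $\psi\mapsto\psi(\cdot\pm e_j)$ becomes $I\otimes\cdots\otimes S^{\pm1}\otimes\cdots\otimes I$ (the shift acting in the $j$-th slot) and multiplication by $V_j(n_j)$ becomes $I\otimes\cdots\otimes M_{V_j}\otimes\cdots\otimes I$. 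Hence
$$U H U^{-1}=\sum_{j=1}^d I\otimes\cdots\otimes H_j\otimes\cdots\otimes I=:\widetilde H,$$
a sum of $d$ bounded, pairwise commuting self-adjoint operators, each obtained by embedding some $H_j$ into the appropriate tensor slot. This is the only ``computational'' step and it is entirely mechanical.

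For part (a) I would invoke (or record a short proof of) the standard spectral-mapping statement for such a commuting family: if $A_j=I\otimes\cdots\otimes B_j\otimes\cdots\otimes I$ with $B_j$ bounded self-adjoint, then $\sigma\bigl(\sum_j A_j\bigr)=\sigma(B_1)+\cdots+\sigma(B_d)$. The inclusion $\supseteq$ follows by forming tensor products of Weyl sequences for the individual $B_j$; the inclusion $\subseteq$ follows from the joint spectral theorem for the commuting family $\{A_j\}$, whose joint spectrum is the full product $\sigma(B_1)\times\cdots\times\sigma(B_d)$, so that $\sigma\bigl(\sum_j A_j\bigr)\subseteq\sigma(B_1)+\cdots+\sigma(B_d)$; the right-hand side is already closed because each $\sigma(B_j)$ is compact. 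Taking $B_j=H_j$ yields $\sigma(H)=\sigma(H_1)+\cdots+\sigma(H_d)$.

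For part (b) I would identify the spectral measure through the unitary group rather than through moments. Under $U$ the vector $\psi$ with $\psi(n)=\psi_1(n_1)\cdots\psi_d(n_d)$ corresponds to $\psi_1\otimes\cdots\otimes\psi_d$, and since the $A_j$ commute, $e^{-it\widetilde H}=e^{-itH_1}\otimes\cdots\otimes e^{-itH_d}$. Therefore, for every $t\in\R$,
$$\langle\psi,e^{-itH}\psi\rangle=\prod_{j=1}^d\langle\psi_j,e^{-itH_j}\psi_j\rangle=\prod_{j=1}^d\widehat{\mu_j}(t)=\widehat{\mu_1\ast\cdots\ast\mu_d}(t),$$
where $\widehat{\,\cdot\,}$ denotes the Fourier transform of a finite Borel measure (the $\mu_j$ have total mass $\|\psi_j\|^2$, but the convolution of finite measures behaves in the usual way). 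Since $\langle\psi,e^{-itH}\psi\rangle=\widehat{\mu}(t)$ by the definition of the spectral measure $\mu$, and a finite Borel measure on $\R$ is determined by its Fourier transform, I conclude $\mu=\mu_1\ast\cdots\ast\mu_d$.

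I do not expect a genuine obstacle: the content is standard operator theory, and the statement is included mainly for completeness. The only points that deserve a line of care are, in part (a), that the sum of the compact spectra is already closed and that the joint spectrum of the embedded family is genuinely the full product (both immediate from the tensor decomposition), and, in part (b), the appeal to uniqueness of a finite measure with prescribed Fourier transform (one could instead match the moments $\langle\psi,H^k\psi\rangle$ via the multinomial theorem together with commutativity of the $A_j$, which works since the measures are compactly supported, but the Fourier route is cleaner).
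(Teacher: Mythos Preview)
Your proposal is correct and follows essentially the same route as the paper: both arguments pass through the unitary identification of $\ell^2(\Z^d)$ with the $d$-fold tensor product and the resulting decomposition $U H U^{-1}=\sum_j I\otimes\cdots\otimes H_j\otimes\cdots\otimes I$. The paper simply cites Reed--Simon (Theorems~II.10 and~VIII.33) for the conclusions, whereas you spell out the standard arguments (Weyl sequences plus the joint spectral theorem for~(a), factoring $e^{-it\widetilde H}$ and matching Fourier transforms for~(b)); these are effectively the contents of the cited proofs, so there is no genuine divergence in method.
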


\begin{proof}
Recall the definition and properties of tensor products of Hilbert spaces and operators on these spaces; see, for example, \cite[Sections~II.4 and VIII.10]{RS}. It follows from \cite[Theorem~II.10]{RS} that there is a unique unitary map $U$ from $\ell^2(\Z) \otimes \cdots \otimes \ell^2(\Z)$ ($d$ factors)
to $\ell^2(\Z^d)$ so that for $\psi_j \in \ell^2(\Z)$, the elementary tensor $\psi_1 \otimes \cdots \otimes \psi_d$ is mapped
to the element $\psi$ of $\ell^2(\Z^d)$ given by $\psi(n) = \psi_1(n_1) \cdots \psi_d(n_d)$. With this unitary map $U$, we have
$$
U^* H U = \sum_{j = 1}^d \mathrm{Id} \otimes \cdots \otimes \mathrm{Id} \otimes H_j \otimes \mathrm{Id} \otimes \cdots \otimes \mathrm{Id},
$$
with $H_j$ being the $j$-th factor. Given this representation, part~(a) now follows from \cite[Theorem~VIII.33]{RS} (see also the
example on \cite[p.~302]{RS}). Part~(b) follows from the proof of \cite[Theorem~VIII.33]{RS}.
\end{proof}

Let us now consider the product of ergodic families of one-dimensional Schr\"odinger operators. Suppose $(\Omega_j,\mu_j)$ are probability spaces, $T_j : \Omega_j \to \Omega_j$ are ergodic invertible transformations, and $f_j : \Omega_j \to \R$ are measurable and bounded, $1 \le j \le d$. For $\omega_j \in \Omega_j$ and $n_j \in \Z$, we let $V_{j,\omega_j}(n_j) = f_j(T^{n_j}(\omega_j))$, $1 \le j \le d$. The associated Schr\"odinger operators in $\ell^2(\Z)$ will be denoted by $H_{j,\omega_j}$, $1 \le j \le d$. Consider the product space $\Omega = \Omega_1 \times \cdots \times \Omega_d$, equipped with the product measure $\mu = \mu_1 \times \cdots \times \mu_d$, and the separable potential
$$
V_\omega(n) = V_{1,\omega_1}(n_1) + \cdots + V_{d,\omega_d}(n_d),
$$
where
$$
\omega = (\omega_1,\ldots,\omega_d) \in \Omega , \; n = (n_1,\ldots,n_d) \in \Z^d.
$$
The associated Schr\"odinger operator in $\ell^2(\Z^d)$ will be denoted by $H_\omega$. For the general theory of ergodic Schr\"odinger operators, we refer the reader to \cite{CL, CFKS}.

\begin{prop}
There exist sets $\Sigma_j$, $1 \le j \le d$, and $\Sigma$ such that
$$
\sigma(H_{j,\omega_j}) = \Sigma_j
$$
for $\mu_j$-almost every $\omega_j \in \Omega_j$, $1 \le j \le d$, and
$$
\sigma(H_\omega) = \Sigma
$$
for $\mu$-almost every $\omega \in \Omega$. Moreover, we have
$$
\Sigma = \Sigma_1 + \cdots + \Sigma_d.
$$
\end{prop}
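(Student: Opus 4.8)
The plan is to deduce this from Proposition~\ref{p.products}(a) together with the classical fact that the spectrum of an ergodic family of one-dimensional Schr\"odinger operators is almost surely non-random.

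First I would produce the sets $\Sigma_j$. For each $j$ the operator $H_{j,T_j\omega_j}$ is unitarily equivalent to $H_{j,\omega_j}$ (conjugate by the shift on $\ell^2(\Z)$, using $V_{j,T_j\omega_j}(\cdot) = V_{j,\omega_j}(\cdot+1)$), so the set-valued map $\omega_j \mapsto \sigma(H_{j,\omega_j})$ is $T_j$-invariant. Since $\omega_j \mapsto \langle \delta_0, (H_{j,\omega_j}-z)^{-1}\delta_0\rangle$ is measurable for every $z \in \C\setminus\R$, the event $\{\sigma(H_{j,\omega_j}) \cap B \neq \emptyset\}$ is measurable for each closed ball $B$ with rational center and radius; by ergodicity of $T_j$ it has probability $0$ or $1$, and intersecting over the countably many such $B$ gives a full-$\mu_j$-measure set $\Omega_j'$ on which $\sigma(H_{j,\omega_j})$ equals a fixed compact set $\Sigma_j$ (compact because the $f_j$ are uniformly bounded). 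This is standard and I would simply cite \cite{CL, CFKS}.

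Next I would assemble $\Sigma$. For every fixed $\omega = (\omega_1,\ldots,\omega_d) \in \Omega$, the operator $H_\omega$ is exactly the separable operator \eqref{f.2doper} built from the one-dimensional operators $H_{1,\omega_1},\ldots,H_{d,\omega_d}$, so Proposition~\ref{p.products}(a) gives the pointwise identity $\sigma(H_\omega) = \sigma(H_{1,\omega_1}) + \cdots + \sigma(H_{d,\omega_d})$. The product set $\Omega' := \Omega_1'\times\cdots\times\Omega_d'$ has full $\mu$-measure (Fubini), and on $\Omega'$ the right-hand side equals $\Sigma_1 + \cdots + \Sigma_d$; so setting $\Sigma := \Sigma_1 + \cdots + \Sigma_d$ (compact, being a finite sum of compact sets) proves both the almost-sure constancy of $\sigma(H_\omega)$ and the displayed formula.

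There is no real obstacle here; the only point I would be careful about in the write-up is that one cannot obtain the almost-sure constancy of $\sigma(H_\omega)$ directly from an ergodicity argument on $\Omega$, because $T_1 \times \cdots \times T_d$ need not be ergodic. This is precisely why the proof routes through Proposition~\ref{p.products}(a) applied pointwise and then invokes Fubini (a finite product of full-measure sets has full product measure) rather than ergodicity on the product space.
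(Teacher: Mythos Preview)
Your argument is correct, but it differs from the paper's route in one notable way. The paper does obtain the almost-sure constancy of $\sigma(H_\omega)$ directly from an ergodicity argument on $\Omega$: it observes that the transformations $T_j$ (each acting on the $j$-th coordinate of $\Omega$) form a commuting family that is ergodic with respect to the product measure $\mu$, and then invokes the general theory of ergodic Schr\"odinger operators for $\Z^d$-actions to produce $\Sigma$. Only afterwards does it appeal to Proposition~\ref{p.products}(a) to identify $\Sigma$ with $\Sigma_1+\cdots+\Sigma_d$. Your route instead applies Proposition~\ref{p.products}(a) pointwise and then uses Fubini on the product of the full-measure sets $\Omega_j'$; this avoids any appeal to the $\Z^d$ theory and needs only the one-dimensional almost-sure constancy plus the separable structure.

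Your final paragraph is therefore slightly misleading: it is true that the single transformation $T_1\times\cdots\times T_d$ need not be ergodic, but the $\Z^d$-action generated by the individual $T_j$'s is ergodic whenever each $T_j$ is (an invariant function is $T_1$-invariant in the first variable, hence a.e.\ independent of $\omega_1$ by ergodicity of $T_1$, and one iterates). So the paper's ``direct'' ergodicity argument on $\Omega$ is legitimate; you have simply found an alternative that sidesteps it. Either approach is fine here, and yours has the minor advantage of citing only the one-dimensional almost-sure spectrum result rather than its $\Z^d$ analogue.
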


\begin{proof}
By assumption, for $1 \le j \le d$, $(\Omega_j,\mu_j,T_j)$ is ergodic, and hence $\sigma(H_{j,\omega_j}) = \Sigma_j$ for $\mu_j$-almost every $\omega_j \in \Omega_j$ follows from the general theory. Moreover, modulo a natural identification, the $\{T_j\}$ are a family of commuting invertible transformations of $\Omega$ that is ergodic with respect to $\mu$, and hence $\sigma(H_\omega) = \Sigma$ for $\mu$-almost every $\omega \in \Omega$ follows from the general theory as well. Given these statements, $\Sigma = \Sigma_1 + \cdots + \Sigma_d$ then follows from Proposition~\ref{p.products}.
\end{proof}

Next we consider the associated density of states measures, namely,
$$
\int_\R g(E) \, d\nu_j(E) = \int_{\Omega_j} \langle \delta_0 , g(H_{j,\omega_j}) \delta_0 \rangle_{\ell^2(\Z)} \, d\mu_j(\omega_j), \quad 1 \le j \le d
$$
and
$$
\int_\R g(E) \, d\nu(E) = \int_{\Omega} \langle \delta_0 , g(H_\omega) \delta_0 \rangle_{\ell^2(\Z^d)} \, d\mu(\omega)
$$
for bounded measurable functions $g$. Note that, by the spectral theorem, the density of states measure is the average of the spectral measure associated with the operator in question and the vector $\delta_0$ with respect to the probability measure in question. In particular, each of these measures is a compactly supported probability measure on the real line. The associated distribution function is called the respective integrated density of states. They have an alternative description in terms of the thermodynamic limit of the distribution of the eigenvalues of finite-volume restrictions of the operator. Denote by $H_{j,\omega_j}^{(N)}$ the restriction of $H_{j,\omega_j}$ to the interval $[0,N-1]$ with Dirichlet boundary conditions. Denote the corresponding eigenvalues and eigenvectors by $E^{(N)}_{j,\omega_j,k}$, $\phi^{(N)}_{j,\omega_j,k}$, $1 \le j \le d$, $\omega_j \in \Omega_j$, $1 \le k \le N$. Then, for $1 \le j \le d$ and $\mu_j$-almost every $\omega_j \in \Omega_j$, we have
\begin{equation}\label{e.IDSjdesc}
\lim_{N \to \infty} \frac{1}{N} \# \{ 1 \le k \le N : E^{(N)}_{j,\omega_j,k} \le E \} = \nu_j((-\infty,E])
\end{equation}
for every $E \in \R$.

Similarly, we denote by $H_{\omega}^{(N)}$ the restriction of $H_\omega$ to $[0,N-1]^d$ with Dirichlet boundary conditions. Denote the corresponding eigenvalues and eigenvectors by $E^{(N)}_{\omega,k}$, $\phi^{(N)}_{\omega,k}$, $\omega \in \Omega_j$, $1 \le k \le N^d$. Then, for $\mu$-almost every $\omega \in \Omega$, we have
\begin{equation}\label{e.IDSdesc}
\lim_{N \to \infty} \frac{1}{N^d} \# \{ 1 \le k \le N^d : E^{(N)}_{\omega,k} \le E \} = \nu((-\infty,E])
\end{equation}
for every $E \in \R$.

\begin{prop}
We have
$$
\mathrm{supp} \, \nu_j = \Sigma_j,
$$
$1 \le j \le d$, and
$$
\mathrm{supp} \, \nu = \Sigma.
$$
Here, $\mathrm{supp} \, \eta$ denotes the topological support of a probability measure $\eta$ on $\R$. Moreover, we have
$$
\nu = \nu_1 \ast \cdots \ast \nu_d.
$$
\end{prop}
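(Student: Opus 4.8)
The plan is to deduce the convolution identity $\nu = \nu_1 \ast \cdots \ast \nu_d$ from Proposition~\ref{p.products}(b) by averaging over the randomness, and then to obtain the two support statements from the standard fact that the topological support of a density of states measure equals the almost-sure spectrum, together with the elementary description of the support of a convolution of compactly supported measures.

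First I would establish the convolution identity. Fix $\omega = (\omega_1,\ldots,\omega_d) \in \Omega$ and let $\rho_\omega$ (resp.\ $\rho_{j,\omega_j}$) denote the spectral measure of $H_\omega$ with respect to $\delta_0 \in \ell^2(\Z^d)$ (resp.\ of $H_{j,\omega_j}$ with respect to $\delta_0 \in \ell^2(\Z)$). Under the unitary $U$ of Proposition~\ref{p.products}, $\delta_0 \in \ell^2(\Z^d)$ corresponds to the elementary tensor $\delta_0 \otimes \cdots \otimes \delta_0$, so Proposition~\ref{p.products}(b) yields $\rho_\omega = \rho_{1,\omega_1} \ast \cdots \ast \rho_{d,\omega_d}$ for every such $\omega$. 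By definition of the density of states measures, $\nu = \int_\Omega \rho_\omega \, d\mu(\omega)$ and $\nu_j = \int_{\Omega_j} \rho_{j,\omega_j} \, d\mu_j(\omega_j)$ as weak integrals of probability measures; hence for bounded continuous $g : \R \to \R$,
\[
\int_\R g \, d\nu = \int_\Omega \Big( \int_{\R^d} g(E_1 + \cdots + E_d) \, d\rho_{1,\omega_1}(E_1) \cdots d\rho_{d,\omega_d}(E_d) \Big) \, d\mu(\omega).
\]
Since $\mu = \mu_1 \times \cdots \times \mu_d$ and $g$ is bounded, Fubini's theorem permits carrying out the $\omega_j$-integrations one at a time, and using $\int_{\Omega_j} \rho_{j,\omega_j}\, d\mu_j(\omega_j) = \nu_j$ at each stage one obtains $\int_\R g\, d\nu = \int_{\R^d} g(E_1 + \cdots + E_d)\, d\nu_1(E_1) \cdots d\nu_d(E_d) = \int_\R g \, d(\nu_1 \ast \cdots \ast \nu_d)$. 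Since bounded continuous functions determine a finite Borel measure on $\R$, this gives $\nu = \nu_1 \ast \cdots \ast \nu_d$. (Measurability in $\omega$ of the relevant spectral quantities is part of the standard structure theory of ergodic Schr\"odinger operators; see \cite{CL, CFKS}.)

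For the supports, the inclusion $\mathrm{supp}\,\nu_j \subseteq \Sigma_j$ is immediate: $\nu_j$ is the $\mu_j$-average of the measures $\rho_{j,\omega_j}$, each of which is supported on $\sigma(H_{j,\omega_j}) = \Sigma_j$ for $\mu_j$-a.e.\ $\omega_j$. For the reverse inclusion I would argue by contradiction: if $E \in \Sigma_j$ but $\nu_j((E-\varepsilon, E+\varepsilon)) = 0$ for some $\varepsilon > 0$, then $\langle \delta_0, \chi_{(E-\varepsilon, E+\varepsilon)}(H_{j,\omega_j})\delta_0\rangle = 0$, hence $\chi_{(E-\varepsilon, E+\varepsilon)}(H_{j,\omega_j})\delta_0 = 0$, for $\mu_j$-a.e.\ $\omega_j$; using the covariance $S H_{j,\omega_j} S^{-1} = H_{j,T_j\omega_j}$ (with $S$ the shift on $\ell^2(\Z)$) and ergodicity of $T_j$ one propagates this to $\chi_{(E-\varepsilon, E+\varepsilon)}(H_{j,\omega_j})\delta_n = 0$ for all $n \in \Z$ and $\mu_j$-a.e.\ $\omega_j$, so $\chi_{(E-\varepsilon, E+\varepsilon)}(H_{j,\omega_j}) = 0$ and $(E-\varepsilon, E+\varepsilon) \cap \Sigma_j = \emptyset$, a contradiction; thus $\mathrm{supp}\,\nu_j = \Sigma_j$. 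For $\nu$ I would then invoke the elementary identity $\mathrm{supp}(\eta_1 \ast \cdots \ast \eta_d) = \overline{\mathrm{supp}\,\eta_1 + \cdots + \mathrm{supp}\,\eta_d}$ valid for compactly supported probability measures on $\R$: taking $\eta_j = \nu_j$ and noting that the sumset of the compact sets $\Sigma_j$ is already closed gives $\mathrm{supp}\,\nu = \Sigma_1 + \cdots + \Sigma_d = \Sigma$ by the preceding proposition.

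I expect the only step requiring genuine care to be the reverse support inclusion for the $\nu_j$ — passing from the absence of $\nu_j$-mass near $E$ to the absence of spectrum near $E$ via covariance and ergodicity; the convolution identity and the convolution support formula are routine bookkeeping once the unitary equivalence of Proposition~\ref{p.products} and Fubini's theorem are in hand.
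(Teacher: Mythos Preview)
Your argument is correct, but your route to the convolution identity differs from the paper's. The paper proves $\nu = \nu_1 \ast \cdots \ast \nu_d$ via the finite-volume eigenvalue counting descriptions \eqref{e.IDSjdesc} and \eqref{e.IDSdesc}: it observes that the tensor products $\phi^{(N)}_{1,\omega_1,k_1}\otimes\cdots\otimes\phi^{(N)}_{d,\omega_d,k_d}$ form an orthonormal eigenbasis of $H_\omega^{(N)}$ with eigenvalues $E^{(N)}_{1,\omega_1,k_1}+\cdots+E^{(N)}_{d,\omega_d,k_d}$, so the empirical eigenvalue distribution of $H_\omega^{(N)}$ is the convolution of the one-dimensional ones, and one passes to the thermodynamic limit. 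You instead work directly at infinite volume: apply Proposition~\ref{p.products}(b) to get $\rho_\omega = \rho_{1,\omega_1}\ast\cdots\ast\rho_{d,\omega_d}$ for the $\delta_0$-spectral measures, then average over $\omega$ using Fubini and the product structure of $\mu$. Your approach is cleaner in that it avoids finite-volume restrictions and the (standard but nontrivial) fact that Dirichlet boundary conditions do not affect the limit; the paper's approach is more concrete and makes the connection to eigenvalue statistics explicit. For the support statements, the paper simply cites ``the general theory'' for both $\mathrm{supp}\,\nu_j=\Sigma_j$ and $\mathrm{supp}\,\nu=\Sigma$, whereas you spell out the covariance/ergodicity argument for the former and then deduce the latter from the convolution identity and the support-of-convolution formula; both are fine, and your derivation of $\mathrm{supp}\,\nu=\Sigma$ is a nice internal consistency check.
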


\begin{proof}
The statements about the topological supports follow from the general theory. The statement $\nu = \nu_1 \ast \cdots \ast \nu_d$ follows from \eqref{e.IDSjdesc} and \eqref{e.IDSdesc}. Indeed, the eigenvectors $\phi^{(N)}_{j,\omega_j,k}$ of $H_{j,\omega_j}^{(N)}$ form an orthonormal basis of $\ell^2([0,N-1])$ for $1 \le j \le d$. Thus, the associated elementary tensors
\begin{equation}\label{e.tensoredev}
\phi^{(N)}_{1,\omega_1,k_1} \otimes \cdots \otimes \phi^{(N)}_{d,\omega_d,k_d},
\end{equation}
where $1 \le k_j \le N$, form an orthonormal basis of $\ell^2([0,N-1]) \otimes \cdots \otimes \ell^2([0,N-1])$, which is canonically isomorphic to $\ell^2([0,N-1]^d)$ (we use this identification freely). Moreover, the vector in \eqref{e.tensoredev} is an eigenvector of $H_{\omega}^{(N)}$, with $\omega = (\omega_1, \ldots, \omega_d)$, corresponding to the eigenvalue $E^{(N)}_{1,\omega_1,k_1} + \cdots + E^{(N)}_{d,\omega_d,k_d}$; compare the proof of Proposition~\ref{p.products}.(b). In particular, by dimension count, these eigenvalues exhaust the entire set $\{ E^{(N)}_{\omega,k} : 1 \le k \le N^d \}$. This shows that for any $E_1 < E_2$,
$$
\# \{ 1 \le k \le N^d : E^{(N)}_{\omega,k} \in (E_1,E_2] \}
$$
is equal to
$$
\# \{ 1 \le k_1 , \ldots, k_d \le N : E^{(N)}_{1,\omega_1,k_1} + \cdots + E^{(N)}_{d,\omega_d,k_d} \in (E_1,E_2] \}.
$$
This implies $\nu = \nu_1 \ast \cdots \ast \nu_d$ by \eqref{e.IDSjdesc} and \eqref{e.IDSdesc}.
\end{proof}

\end{appendix}

\end{document}